\documentclass[a4paper,11pt,fullpage]{article}
\usepackage[english]{babel}

\usepackage[left=1in,right=1in,top=1in,bottom=1in]{geometry}

\usepackage{amsmath}
\usepackage{amsthm}
\usepackage{amsfonts}
\usepackage{amssymb}
\usepackage{amsxtra}
\usepackage{latexsym}
\usepackage{ifthen}
\usepackage{cite}
\usepackage{esint}
\usepackage[cp1250]{inputenc}
\usepackage{xcolor}
\usepackage[hidelinks]{hyperref}

\usepackage{tikz}
\usepackage{pgfplots}
\pgfplotsset{compat=1.18}
\usetikzlibrary{patterns}
\usepgfplotslibrary{fillbetween}
\usepackage{graphicx,subfig,float,pgfplots}

\DeclareMathOperator*{\osc}{osc}

\numberwithin{equation}{section}
\newtheorem{theorem}{Theorem}[section]
\newtheorem{lemma}{Lemma}[section]
\newtheorem{remark}{Remark}[section]
\newtheorem{definition}{Definition}[section]

\newtheorem{propo}{Proposition}[section]

\def\XXint#1#2#3{{\setbox0=\hbox{$#1{#2#3}{\int}$}
     \vcenter{\hbox{$#2#3$}}\kern-.5\wd0}}

\begin{document}
\title{Regularity for Doubly Nonlinear Equations in the Mixed Regime
% H\"older Regularity for Mixed Singular-Degenerate Doubly Nonlinear Equations
}

\author{S. Ciani, E. Henriques, M. Savchenko, I. Skrypnik, Y. Yevgenieva}

\maketitle   

\begin{abstract} \noindent We study the local H\"older continuity of nonnegative solutions to doubly nonlinear equations by introducing a new technique that allows us to treat the cases where the equation is both singular and degenerate, up to specific Barenblatt numbers. Our argument relies on a new integral $L^1$-$L^1$ Harnack estimate, of independent interest. 
\end{abstract}

 % \begin{center}
%			\small
 %  \tableofcontents
%	\end{center}

\pagestyle{myheadings} \thispagestyle{plain}
\markboth{S. Ciani, E. Henriques, M. Savchenko, I. Skrypnik, Y. Yevgenieva}{Regularity for Doubly Nonlinear Equations in the Mixed Regime}

\section{Introduction}\label{Introduction}

%  \vskip0.3cm \noindent 
\subsection{Origins and framing of the topic}

The strong correspondence between the porous medium equation
and the $p$-Laplacian equation leads to the finding of a class of
equations enjoying the same properties: the so-called class of
 doubly nonlinear equations, which appear in the literature in three (somehow equivalent) forms
\begin{equation}\label{dnleq1}
u_t-\textrm{div} \big(|u|^{m-1} |Du|^{p-2} Du\big)=0 , \qquad p>1, \quad m>0 ,
\end{equation}

\begin{equation}\label{eq1.0}
u_t-\textrm{div} \big(|Du^m|^{p-2} Du^m \big)=0 , \qquad p>1, \quad m>0 ,
\end{equation}

\begin{equation}\label{dnleq3}
(u^q)_t-\textrm{div} \big(|Du|^{p-2} Du\big)=0 , \qquad p>1, \quad q>0  \ .
\end{equation}

\noindent Such equations rule several physical phenomena, e.g., the dynamics of
the turbulent flow of a non-Newtonian polytropic fluid through a
porous medium. These equations  were introduced, for the first time,
by Lions \cite{Li}. Especially in the last years,
many papers have been devoted to this topic (for the first contributions
on the subject see  \cite{Ka}). But, as already written above, the
reason for the interest in these equations consists in being
 a natural bridge between two natural generalisations of the heat
equation: the $p$-Laplacian and the porous medium equations. Each of the three formulations above presents a different characterisation of the loss of linear structure of the heat equation. 

\vskip0.2cm \noindent {\bf {\it Classification}}. A first classification of the doubly nonlinear equation (\ref{dnleq1}) was proposed in \cite{Vespri2} where the case $m+p>3$ is denoted as degenerate case, the case $2<m+p<3$ corresponds to the singular case, while $m+p=3$  is known as Trudinger's equation (it was introduced by Trudinger in \cite{T68}). Doubly nonlinear equations of type \eqref{eq1.0} are classified as doubly degenerate if $m > 1$ and $p > 2$, singular-degenerate if $m > 1$ and $1<p<2$, degenerate-singular if $0<m<1$ and $p > 2$, and doubly singular if $0<m<1$ and $1<p<2$. Furthermore, one can distinguish the diffusion process from slow diffusion, when $ m(p-1)>1$, to fast diffusion  when $m(p-1)<1$ (see \cite{BHSS21}).  In the latter formulation \eqref{dnleq3} (maintaining the nonlinearity in time), the loss of its parabolic character is classified according to what goes bad in time and space. Namely, the equation is said to be singular-degenerate when $0<q<1$ and $p>2$, degenerate-singular when $q>1$ and $1<p<2$, doubly singular when $0<q<1$ and $1<p<2$, and finally doubly degenerate when $q>1$ and $p>2$ (see  \cite{HenrLal13, EH20, EH21, EH23}). 
 All the different models \eqref{dnleq1}-\eqref{eq1.0}-\eqref{dnleq3} have in common the double nonlinearity and can informally be considered as a single one. The results can therefore be viewed {\it formally} as being imported from one to another with a simple change of variables. However, their different formulations lead to different definitions of weak solutions and particular technical requirements in order to obtain the expected, consistent, and correct estimates. 
 
 \vskip0.3cm 

\vskip0.2cm \noindent {\bf {\it Regularity Theory}}. Several authors have been developing a regularity theory for these classes of equations over the last decades, which is still fragmented. It is possible to identify two main approaches leading to the H\"older continuity: one relying on Cacciopoli estimates, an iterative scheme based on Moser's methods or another kind of iterative scheme based on De Giorgi's method and making use of intrinsic scaling (a powerful tool devised by DiBenedetto in the early 90s); and another different one based on a geometric tangential approach and intrinsic scaling technique (first presented for $u_t-\textrm{div} \big(|Du|^{p-2} Du\big)=f \in L^{r,q} ,$ $p>2$, see \cite{TU14}).

\noindent In what follows and to the best of our knowledge, we present several of the available results. 

\vspace{.1cm}

Concerning the equation \eqref{dnleq1}, in \cite{FornaroS} an intrinsic Harnack inequality is derived (and therefore the local H\"older continuity is obtained), within the range $m\geqslant 1$ and $p\geqslant 2$, for nonnegative local weak solutions of a wide class of doubly nonlinear degenerate parabolic equations having \eqref{dnleq1} as a prototype.  In \cite{FoSoVe, FoSoVe2, FoSoVe3} one can find energy estimates and an integral Harnack inequality for $m>1$ and $2<m+p<3$, results on the expansion of positivity, $L^r-L^\infty$ estimates, and Harnack type inequalities for  $2<m+p<3$. In \cite{FornGian10}, the authors present lower pointwise estimates, in terms of suitable sub-potentials and an alternative form of the Harnack inequality, for the nonnegative weak solutions of a wider class of doubly nonlinear degenerate ($p>2$, $m \geqslant 1$) parabolic equations having \eqref{dnleq1} as a prototype. The study of the behavior of weak solutions in the limiting case $p>1$ and $m+p = 2$ was done in \cite{FoHeVe, FoHeVe1, FoHeVe2} where the authors presented a weak Harnack estimate; proved $L^r$ and $L^r-L^\infty$ estimates and Harnack estimates within the very singular range $p>1$ and $3-p <m+p <2$; obtained stability results for the class of doubly nonlinear very singular parabolic equations. 

\noindent Results on the H\"older continuity can be found in \cite{Iv1} for $m-1=\sigma(p-1)$, $\sigma \geqslant 0$, $p\geqslant 2$; in \cite{Vesp92} for bounded weak solutions of a class of quasilinear parabolic equations having this equation as a prototype, for $m+p >2$ and $p>1$. In \cite{PoVe}, both interior and boundary H\"older continuity for bounded weak solutions to a class of quasilinear parabolic equations  were obtained within the ranges $m\geqslant 1$ and $p \geqslant 2$. In \cite{VeVe22}, the authors revisit several of the previous works and give alternative proofs, namely by  rewriting the equation as \eqref{eq1.0}, for $1<p<2$, $m>1$, and $2<m+p<3$. As for the nonhomegenous case $\displaystyle{u_t-\textrm{div} \big(|u|^{m-1} |Du|^{p-2} Du\big)=f\in L^{q,r} }$,	
in \cite{Araujo20} it is showed, for $m>1$ and $p>2$, that solutions are locally of class $C^{0,\alpha}$,  where H\"older's exponent $\alpha$ depends explicitly only on the optimal H\"older exponent for solutions of the homogeneous case, the integrability of $f$ in space and time, and the nonlinearity parameters $p$ and $m$. In \cite{ASRS23} the authors establish sharp regularity for the solutions in H\"older spaces, while in \cite{JSR22}, for $m\geqslant 1$ and $p \geqslant 2$, $C^{\alpha, \alpha/\theta}$ regularity estimates are derived for bounded weak solutions. In both papers, the proof is based on the geometric tangential method and the intrinsic scaling technique.

\vspace{.1cm}

When considering the doubly nonlinear prototype \eqref{eq1.0}, a Harnack inequality for nonnegative weak solutions was derived in \cite{BHSS21}, for $m> 0$, $p>1$, and $m(p-1)>1$. 

\vspace{.1cm}

Finally, when considering the $(q,p)$ doubly nonlinear setting presented by the prototype equation \eqref{dnleq3},
and as for Trudinger's equation, that is for $q=p-1$, a first proof, based on Moser's approach \cite{M64}, on Harnack inequality was given by Trudinger in \cite{T68}. In \cite{KiKu} the authors presented a simpler proof of Harnack inequality replacing the Lebesque measure by  a doubling Borel measure supporting a Poincar\'e inequality. In \cite{GiVe}, the authors considered $p>2$ and, by combining De Giorgi's methods with Moser's logarithmic estimates, showed that positive solutions satisfy a proper Harnack inequality. Recently, in \cite{BDGLS26} an intrinsic Harnack inequality was derived within the fast diffusion regime $0<p-1<q$.

\noindent The local H\"older continuity to Trudinger's equation was studied in \cite{KuusSiljUrba10}, for $p>2$, and in \cite{KLSU}, for $1<p<2$, by discriminating between large scales (for which a Harnack inequality is used) and small scales (using intrinsic scaling). When considering possibly sign-changing solutions, for $p>1$, the authors \cite{BDL21} established the interior and boundary H\"older continuity as well as an alternative proof of Harnack inequality for nonnegative solutions which leads to a Liouville-type result. The nonhomegeneous case $\displaystyle{(u^{p-1})_t-\textrm{div} \big(|Du|^{p-2} Du\big)=f \in L^{q,r}}$, $p>2$, was studied in \cite{NU20}, where the authors showed that bounded solutions are locally H\"older continuous using the full power of the homogeneity in the equation to develop the regularity analysis in the $p$-parabolic geometry, without any need of intrinsic scaling, as anticipated by Trudinger. Several regularity results were also obtained when $q\neq p-1$. In \cite{HenrLal13}, the authors prove local H\"older continuity for the degenerate-singular range $q>1$ and $1<p<2$. In \cite{EH20} the same regularity result is obtained now for $p>2$ and $0<q<1$. The expansion of positivity was proved in \cite{EH21} (and afterwards also in \cite{M23}), and by exploiting these results the H\"older continuity has been derived in \cite{EH23} for $1<p<2$ and $p-1<q<1$, and in \cite{CH25} for $p>2$ and $0<q<p-1$. Within the range $1<p<2$ and $q>p-1$, local H\"older continuity of possibly sign-changing solutions was proved in \cite{NaS22} using  the method of intrinsic scaling and expansion of positivity (without the exponential shift). The case $p>2$ and $0<q<p-1$ was treated in \cite{BDLS23}, where the boundary regularity for initial-boundary value problems of Dirichlet and Neumann type was also  obtained. 

\noindent Local and global boundedness results are presented in \cite{HL16, HL18} by working in measure spaces equipped with a doubling non-trivial Borel measure supporting a Poincar\'e inequality for $0<q<1$, $p>1$ and $q>1$, $p>1$, respectively.

\vskip0.2cm \noindent {\bf {\it The issue of H\"older Continuity}}. If $u$ is a solution to equation \eqref{eq1.0} and $C$, $c$ are constants such that $(C-u)$ and $(u-c)$ are nonnegative, unfortunately the functions $(C-u)$ and $(u-c)$ do not solve an equation that has a similar structure to the original \eqref{eq1.0}. This simple fact destroys the usual implication {\it Harnack inequality} $\Rightarrow$ {\it H\"older continuity}, obtained by controlling the oscillation of $u$ by properly choosing $C,c$ as the local supremum or infimum of $u$ in an iterative fashion. The classic method therefore has to distinguish between two cases (see \cite{CV} for a simple example):
\begin{enumerate}
    \item when the infimum of $u$ is quantitatively away from zero, which means that locally on a cylinder $Q$ the inequality
\[\inf_Q u \geqslant\frac{1}{\gamma} \sup_Q u\] is valid\footnote{We observe that this assumption already embodies a Harnack inequality for a nontrivial $u$ in $Q$, that is indeed useless without further properties of the equation.} for some fixed constant $\gamma>1$;
\item when the supremum of $u$ is quantitatively controlled by the oscillation, which means that locally on a cylinder $Q$ the inequality
\[\sup_Q u \leqslant  \tilde{\gamma} \osc_Q u\] holds true for some fixed constant $\tilde{\gamma}(\gamma)>1$.
\end{enumerate}
One can easily confirm that these two alternatives constitute a dichotomy (see for instance \eqref{eq9.14}-\eqref{eq9.15} in the text below). Now, in the first case, the strategy is to see equation \eqref{eq1.0} as a nondegenerate equation in $Q$, since $u$ does not vanish; while in the second case it would be desirable to apply the technique of {\it intrinsic scaling} developed by DiBenedetto  for the parabolic case \cite{DiB1993}, exploiting the original idea of the level-set method proposed by DeGiorgi \cite{DG}. The implementation of this technique differs considerably depending on how the equation behaves. Therefore, for the doubly-nonlinear equation \eqref{eq1.0}, the procedure encumbers dramatically when the two behaviors, singular and degenerate, are combined. The precise aim of our work here is to step through this gap and show that for a range of exponents in the mixed singular-degenerate or degenerate-singular regime, it is still possible to recover the desired regularity.

\noindent \subsection{Main Result}  In this paper, we are concerned with doubly nonlinear parabolic equations whose structure is modeled after the prototype \eqref{eq1.0}. Let indeed $\Omega$ be a domain in $\mathbb{R}^{N}$, $T>0$, $\Omega_{T}:= \Omega \times (0, T)$. 
We study nonnegative (sub) super-solutions to the equation
\begin{equation}\label{eq1.1}
u_{t}-\textrm{div}\mathbf{A}(x, t, u, D u)=0, \quad \quad  (x, t)\in \Omega_{T}.
\end{equation}
Throughout the paper we suppose that the functions $\mathbf{A}=(A_1, \cdots, A_N):\Omega_{T}\times \mathbb{R}_+\times \mathbb{R}^{N} \rightarrow \mathbb{R}^{N}$ are Carath\'eodory, i.e. such that
$\mathbf{A}(\cdot, \cdot, u, \xi)$ are Lebesgue measurable for all  $u\in \mathbb{R}_+$, $\xi \in \mathbb{R}^{N}$,
and $\mathbf{A}(x, t, \cdot,  \cdot)$ are continuous for almost all $(x, t)\in \Omega_{T}$.
We also assume that the following structure conditions are satisfied
\begin{equation}\label{eq1.2}
\begin{cases}
\mathbf{A}(x, t, u, D u) D u^{m^-} \geqslant  K_{1}\,u^{(m-m^-)(p-1)}|D u^{m^-}|^{p},\\
|\mathbf{A}(x, t, u,  D u)| \leqslant  K_{2}\,u^{(m-m^-)(p-1)}|D u^{m^-}|^{p-1},
\end{cases}
\end{equation}
where $m>0$, $p>1$, $K_{1}$, $K_{2}$ are positive constants and $m^-:=\min(1,m)$.

\begin{definition}
We say that a function $u$ is a  nonnegative, locally bounded, local weak  (sub) super-solution to \eqref{eq1.1} if 
\begin{equation*}
0 \leqslant \, u \in  C_{\textrm{loc}}(0, T; L^{1+m^-}_{\textrm{loc}}(\Omega)),\,\qquad u^{m^-} \in L_{\textrm{loc}}^{p}(0, T; W_{\textrm{loc}}^{1, p}(\Omega)),\,\qquad u\in L^\infty_{loc}(\Omega_T),
\end{equation*}
and for any compact set $E \subset \Omega$
and every subinterval $[t_{1}, t_{2}]\subset (0, T]$ there holds
\begin{equation}\label{eq1.3}
\int\limits_{E}u \zeta\, dx \bigg|^{t_{2}}_{t_{1}} + \int\limits^{t_{2}}\limits_{t_{1}}\int\limits_{E}
\{-u\zeta_{\tau}+ \mathbf{A}(x, \tau, u, D u) \,D \zeta\}\, dx d\tau \leqslant (\geqslant)\, 0,
\end{equation}
 for any testing functions $\zeta \in W^{1,1+\frac{1}{m^-}}(0, T; L^{1+\frac{1}{m^-}}(E))\cap L^{p}(0, T; W_{0}^{1, p}(E))$, $\zeta \geqslant 0$.
\end{definition}
\begin{remark} In order to simplify the presentation, we have decided to give a definition of solution that already encodes the nonnegativity of $u$ and its local boundedness, besides a certain integrability of the gradient of small powers of $u$. For these topics, we refer to \cite{CHSS} and references therein.

\end{remark}

\noindent Our main result reads as follows. Let the main degeneracy exponent be 
\[\lambda=m(p-1)\,.\]

\begin{theorem}\label{th1.1}
Let $u$ be a nonnegative, local weak solution to \eqref{eq1.1}-\eqref{eq1.2} in $\Omega_T$ and assume also that one of the following conditions holds
\begin{equation}\label{eq1.7}
\lambda<1 \quad \text{and} \quad p+N(\lambda-1)>0,
\end{equation}
or
\begin{equation}\label{eq1.5}
\lambda>1\quad \text{and}\quad
\frac{2N}{N+1}<p<2,
\end{equation}
or
\begin{equation}\label{eq1.6}
\lambda>1\quad \text{and}\quad
p>2,
\end{equation}
then $u$ is locally H\"{o}lder continuous in $\Omega_T.$
\end{theorem}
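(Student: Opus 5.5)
The proof runs through the dichotomy described in the introduction, carried out along a nested sequence of intrinsic cylinders. Fix $(x_0,t_0)\in\Omega_T$. Let $\omega_0=\sup u$ on a reference cylinder $Q_0=B_{R}(x_0)\times(t_0-\theta_0 R^p,t_0]$, with an intrinsic time scale $\theta_0=\omega_0^{1-\lambda}$. We then build cylinders $Q_n=B_{R_n}\times(t_0-\theta_n R_n^p,t_0]$ with $R_{n+1}=\varepsilon R_n$, $\theta_n=\omega_n^{1-\lambda}$, and aim to show $\osc_{Q_n}u\le\omega_n$ with $\omega_{n+1}=\delta\omega_n$. Here $\delta\in(0,1)$ and $\varepsilon$ depend only on the data. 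At each step we test the alternatives
\[
\inf_{Q_n}u\geqslant \frac{1}{\gamma}\sup_{Q_n}u \qquad\text{or}\qquad \sup_{Q_n}u\leqslant \tilde\gamma\,\osc_{Q_n}u .
\]
Once the first alternative occurs at some index, we stop the iteration there.

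\textbf{First alternative.} Here $u$ is comparable to a positive constant $\mu\approx\sup_{Q_n}u$ on $Q_n$. By \eqref{eq1.2}, in this cylinder $u$ solves a parabolic equation of $p$-Laplacian type. The coefficient is $u^{(m-m^-)(p-1)}$ times the $u^{m^-}$ gradient weight, which is bounded above and below by powers of $\mu$. After the change of variables $v=u^{m^-}/\mu^{m^-}$ and a time rescaling by $\mu^{\lambda-1}$ (which matches $\theta_n$), we get a nondegenerate $p$-parabolic structure. Its H\"older continuity with uniform constants comes from the DiBenedetto theory: the degenerate case for $p>2$, and the singular case for $p<2$, which for nonnegative solutions needs no extra restriction beyond $p>1$. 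Scaling back gives a H\"older modulus on $Q_n$ that is compatible with the one obtained from the earlier steps.

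\textbf{Second alternative.} This is the core of the argument. Since $\sup u\lesssim\osc u$, the level $\mu^-=\inf_{Q_n}u$ is within $O(\omega_n)$ of zero, so $u$ can be treated like a solution vanishing somewhere. We run the usual two sub-alternatives of De Giorgi type at the levels $\mu^\pm$ on the intrinsic cylinder.

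Near the supremum, an energy estimate for $(u-k)_+$ together with De Giorgi's lemma gives $\sup_{Q_{n+1}}u\le\mu^+-\delta\omega_n$, provided the set where $u$ is close to $\mu^+$ is small in measure. This part is standard, since $u\approx\omega_n$ there and the equation is nondegenerate at that level.

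Near the infimum the situation is harder. If $u$ is small on a large portion, we need expansion of positivity to raise the infimum. In the mixed regime this is exactly where the combined singular and degenerate behaviour obstructs standard methods. Here I would invoke the paper's new integral $L^1$-$L^1$ Harnack estimate, which should have the form
\[
\sup_t\int_{B_\rho}u(\cdot,t)\,dx\le\gamma\inf_t\int_{B_{2\rho}}u\,dx+\gamma\Big(\frac{t}{\rho^{\lambda_N}}\Big)^{\frac1{1-\lambda}}
\]
in the regime $\lambda<1$, and the analogous form for $\lambda>1$. Combined with an $L^1$-$L^\infty$ estimate, it converts measure information at one time into a pointwise positive lower bound $u\ge\eta\omega_n$ on a smaller intrinsic cylinder. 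The hypotheses enter here. The condition $p+N(\lambda-1)>0$ in \eqref{eq1.7} is exactly $\lambda_N:=N(\lambda-1)+p>0$, which is needed for the $L^1$-$L^\infty$ bound in the fast-diffusion case. For $\lambda>1$ with $p<2$, the restriction $p>2N/(N+1)$ is needed for the $L^1$-$L^\infty$ estimate of the singular gradient part; for $p>2$ no such restriction arises.

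\textbf{Closing the iteration.} Either the infimum rises or the supremum drops, so $\omega_{n+1}=\delta\omega_n$. We choose $\varepsilon$ small so that $Q_{n+1}\subset Q_n$ despite $\theta_{n+1}=\delta^{1-\lambda}\theta_n$. For $\lambda>1$ this shrinks the time scale, and for $\lambda<1$ it enlarges it, which $\varepsilon^p\delta^{1-\lambda}\le1$ compensates. The standard argument then yields $\osc_{Q_\rho}u\le\gamma\omega_0(\rho/R)^\alpha$.

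I expect the main obstacle to be the expansion-of-positivity step in the second alternative. Specifically, the constants coming from the $L^1$-$L^1$ Harnack inequality must be independent of $\omega_n$ after intrinsic rescaling, so that $\delta$ and $\varepsilon$ are uniform in $n$. This requires carefully matching the Harnack time scale $\rho^{\lambda_N}(\int u)^{1-\lambda}$ with $\theta_n R_n^p$.
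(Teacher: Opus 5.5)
Your global architecture matches the paper's. You stop at the first index where $u$ becomes comparable to a positive constant and invoke DiBenedetto's theory there; otherwise you reduce the oscillation on nested intrinsic cylinders. However, the core of the second alternative does not work as you describe it, and it fails exactly in the new regimes.

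The decisive gap is the case $\lambda>1$, $\frac{2N}{N+1}<p<2$. You propose an $L^1$--$L^1$ Harnack for $u$ ``of analogous form'' to raise the infimum. Scaling forces an estimate of the form you wrote to have error $\gamma(r^p/(t-s))^{\frac{1}{\lambda-1}}$. On $Q^{\omega}_{\epsilon r}$ this is $\gamma b^{-\frac{1}{\lambda-1}}\omega$, which is useless for $b\leqslant 1$, and nothing in it could produce the purely $p$-dependent threshold $\frac{2N}{N+1}$.

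The missing idea is to reverse the roles of supremum and infimum. Since $m>1$ and $u\leqslant\mu^+$, the weight $u^{(m-1)(p-1)}$ is bounded by $[\mu^+]^{(m-1)(p-1)}$, so $\mu^+-u$ behaves like a solution of a \emph{singular} equation. Theorem~\ref{lem3.2} gives a fast-diffusion $L^1$--$L^1$ Harnack for $\mu^+-u$, with error $\gamma[\mu^+]^{\frac{(m-1)(p-1)}{2-p}}((t-s)/r^p)^{\frac{1}{2-p}}$, which is of order $b^{\frac{1}{2-p}}\omega$ on $Q^\omega_{\epsilon r}$. The paper therefore runs the alternatives on the set $\{u\leqslant\frac14\omega\}$:
\begin{itemize}
\item If this set is small, the De Giorgi lemma for supersolutions (Lemma~\ref{lem2.5}) raises the \emph{infimum}.
\item If it is large at some time $\bar t$, Theorem~\ref{lem3.2} transports the measure of $\{\mu^+-u\geqslant\bar\epsilon\omega\}$ to all times. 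Expansion of positivity for $\mu^+-u$ (Lemma~\ref{new_exp_lem}, stated for $1<p<2$) then lowers the \emph{supremum}.
\end{itemize}

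For $\lambda>1$, $p>2$ the paper uses no Harnack estimate at all, and your proposal supplies no mechanism. There the alternatives are posed on a bottom subcylinder $\bar Q^{\omega}_{\epsilon r}$ of a long cylinder with $b=4^{\lambda-1}\delta>1$. This makes the degenerate expansion of positivity (Lemma~\ref{deg_exp_lem}, with waiting time $\frac12\delta\theta$) land in the top half. On the other branch, the De Giorgi lemma with initial datum (Lemma~\ref{lem2.7}) carries $u\leqslant\mu^+-\frac18\omega$ upward.

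Even for $\lambda<1$, where your ingredients are the right ones, you misidentify what the $L^1$--$L^1$ estimate does and where the hypotheses enter. An $L^1$--$L^\infty$ estimate only bounds $u$ from above, so it can never yield $u\geqslant\eta\omega_n$. In any case the bound $u\leqslant\mu^+\leqslant 2\omega$ it would provide is free here. Also, expansion of positivity starts from a set where $u$ is \emph{large}, not small.

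In the paper, the second alternative gives measure information at one uncontrolled time $\bar t$, while expansion of positivity acts only forward in time. Theorem~\ref{lem3.1} pushes that information to \emph{every} time level of $Q^\omega_{\epsilon r}$, in particular back to its bottom. This uses $\fint_{B_{\epsilon r}}u\leqslant\gamma\bar\epsilon\omega+2\omega\,|\{u\geqslant\bar\epsilon\omega\}|/|B_{\epsilon r}|$ (Lemma~\ref{lem4.1}). Lemma~\ref{lem3.3} then covers the upper half.

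The hypotheses enter only through a balance inside this step. On the intrinsic cylinder the Harnack error is $\gamma b^{\frac{1}{1-\lambda}}\omega$ (respectively $\gamma b^{\frac{1}{2-p}}\omega$). The density forced by the De Giorgi lemma is $\nu=b^{N/p}/\gamma$. The error must be beaten as $b\to0$, which requires:
\begin{itemize}
\item $\frac{1}{1-\lambda}>\frac{N}{p}$, i.e.\ $p+N(\lambda-1)>0$, for $\lambda<1$;
\item $\frac{1}{2-p}>\frac{N}{p}$, i.e.\ $p>\frac{2N}{N+1}$, for $\lambda>1$, $p<2$.
\end{itemize}
No $L^1$--$L^\infty$ bound is involved.

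Finally, your threshold $\tilde\gamma$ cannot be arbitrary. Raising the infimum to $\sigma\bar\epsilon\omega$ reduces the oscillation only if $\mu^+\leqslant(1+\eta_0)\omega$ with $\eta_0=\frac12\sigma\bar\epsilon$, and this is how the paper fixes the dichotomy.
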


%%%%%%%%%%%%%%%%%%%%%%%%%%%%%%%%%%%%%%%%%%%%%%%%%%%%%%%%%%%%%%%%%%%%%%%%%%%%%%%%%%%%%%%%%%%%%%%%%%%%%%%%%%%%%%%%%%%%%%%%%%

\subsection{Novelty and Significance} Referring to the subsection {\it Regularity Theory} of the Introduction, we see that our results are new in the singular-degenerate case
\[\lambda=m(p-1)<1 \qquad \text{with} \qquad p>2,\]
and degenerate-singular case
\[\lambda=m(p-1)>1\qquad  \text{with} \qquad p<2\,.\] \noindent In the first case, as Theorem \ref{th1.1} shows, we are able to show that nonnegative weak solutions are H\"older continuous through the use of the classic doubly nonlinear $L^1$-$L^1$ Harnack-inequality (see Theorem \ref{lem3.2} and Lemma \ref{lem4.1}), if the doubly-nonlinear Barenblatt number $p+N(\lambda-1)$ is positive. 

\vskip0.1cm \noindent In the second case, we show that, since $p<2$, a particular $L^1$-$L^1$ Harnack inequality holds true for the difference $(\sup_Q u-u)$ (see Theorem \ref{lem3.2}) and, by making use of it in a similar fashion (see Lemma \ref{lem6.1}),  we are able to prolonge the time information and achieve the desired regularity, but now only for a range of exponents tied to the condition $1<p<2$, and therefore linked to the positivity of the $p$-Laplacian Barenblatt number $p+N(p-2)$. Moreover, to the best of our knowledge, Theorem~\ref{lem3.2}
has not appeared previously in the literature and represents an independent
result of interest and novelty. 

\vskip0.1cm \noindent In Figure~\ref{fig_1}, we illustrate the ranges of the exponents $p$ and $m$ in the $(p,m)$-plane for which the H\"older continuity of solutions to \eqref{eq1.0} has been established in the existing literature (hatched region) and in the present paper (gray region). The purpose of this figure is to highlight the novelty of our results. In particular, it shows that the degenerate--singular and singular--degenerate cases have not been covered in the literature, whereas these ranges are treated in the present paper up to the Barenblatt number.
%
% \begin{figure}[H] \label{figure}
% \subfloat[\textcolor{blue}{In light gray it is represented the range of exponents $p,m$ such that solutions to \eqref{eq1.0} are locally H\"older continuous ($N=3$).}]{
% \includegraphics[width=0.47\linewidth]{figs/(m,p)-existing.pdf}
% }
% \hspace{10mm}
% \subfloat[\textcolor{blue}{In dark gray, the range of exponents that we cover with our result ($N=3$).}]{
% \includegraphics[width=0.47\linewidth]{figs/(m,p)-new.pdf}
% }
% \caption{\textcolor{blue}{Illustration of the novelty of the present results.}}
% \label{fig_1}
% \end{figure}
%
% \begin{figure}[H]
% \subfloat[Our results are in green]{
% \includegraphics[width=0.47\linewidth]{figs/(p,q)-existing.pdf}
% }
% \hspace{10mm}
% \subfloat[Existing results]{
% \includegraphics[width=0.47\linewidth]{figs/(p,q)-new.pdf}
% }
% \caption{(p,q)-settings}
% \label{fig_2}
% \end{figure}
%
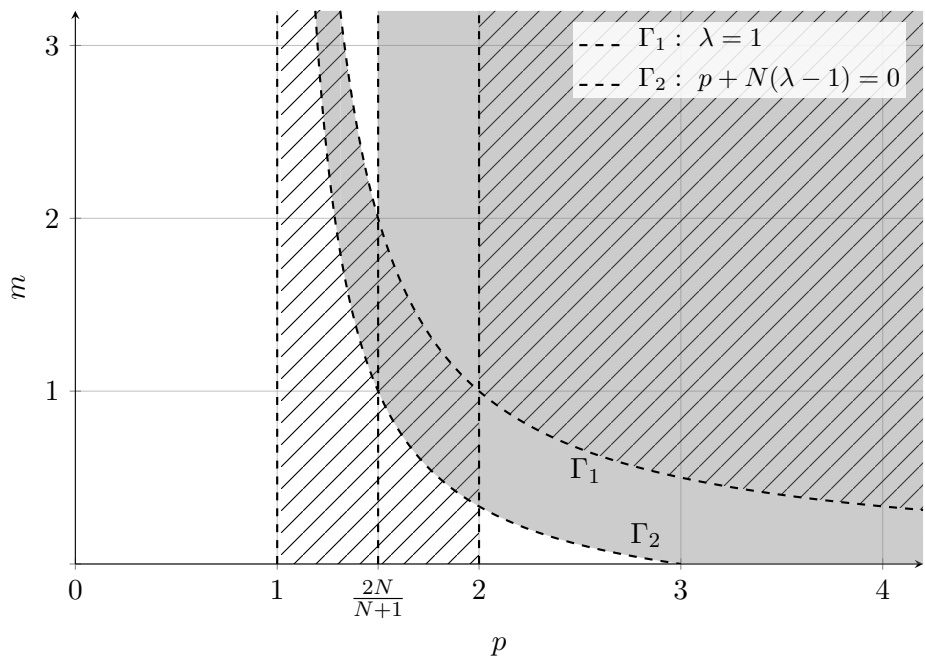
\begin{figure}[H]
\centering
\newcommand{\HatchDistance}{8pt}   % <-- hatching spacing (density control)
\newcommand{\HatchThickness}{0.4pt}

\pgfdeclarepatternformonly[\HatchDistance,\HatchThickness]
{my45hatch}{\pgfpoint{0pt}{0pt}}{\pgfpoint{\HatchDistance}{\HatchDistance}}{\pgfpoint{\HatchDistance}{\HatchDistance}}%
{
	\pgfsetlinewidth{\HatchThickness}
	\pgfpathmoveto{\pgfpoint{0pt}{0pt}}
	\pgfpathlineto{\pgfpoint{\HatchDistance}{\HatchDistance}}
	\pgfusepath{stroke}
}

% Custom gray color (#808080)
\definecolor{mygray}{HTML}{808080}

% Helpful constants (for clean domain splits)
% Top boundary: m = 3.2
% curve m = 1/(p-1) hits top at p = 1 + 1/3.2 = 1.3125
% gline m = (3-p)/(3(p-1)) hits top at p = 12.6/10.6 ? 1.188679245
\def\pCurveTop{1.3125}
\def\pGlineTop{1.188679245}

\begin{tikzpicture}
\begin{axis}[
% Use \linewidth so the picture auto-fits inside subfigures/minipages
width=0.80\linewidth,
height=0.56\linewidth,
xmin=0, xmax=4.2,
ymin=0, ymax=3.2,
% Remove top/right borders (no box frame)
axis lines=left,
xlabel={$p$},
ylabel={$m$},
% Ticks: add special tick at p=1.5 with label 2N/(N+1)
xtick={0,1,1.5,2,3,4},
xticklabels={0,1,$\frac{2N}{N+1}$,2,3,4},
% Remove duplicated "0" at the origin by hiding the y=0 label
ytick={0,1,2,3},
yticklabels={,1,2,3},
% Light grid at major ticks
grid=both,
major grid style={line width=0.2pt, draw=black!25},
minor tick num=0,
clip=true,
unbounded coords=discard,
restrict y to domain=0:3.2,
% -------- Legend as a text box (NO line samples) --------
legend entries={},
legend pos=north east,
legend cell align=left,
legend style={
	draw=none,
	fill=white,
	fill opacity=0.85,
	text opacity=1,
	font=\small
},
]

% Boundary: lambda=1 (i.e., m(p-1)=1)  <=>  m = 1/(p-1)
\addplot[
name path=curve,
domain=1.02:4.2, samples=120,
draw=none
] {1/(x-1)};

% Top and bottom of the plotting window
\addplot[name path=top, domain=0:4.2, samples=2, draw=none] {3.2};
\addplot[name path=bottom, domain=0:4.2, samples=2, draw=none] {0};

% Boundary: p + 3(m(p-1)-1)=0  <=>  m = (3-p)/(3(p-1))
\addplot[
name path=gline,
domain=1.02:3, samples=120,
draw=none
] {(3-x)/(3*(x-1))};

% lambda=1: start exactly where the curve meets m=3.2
\addplot[dashed, thick, domain=\pCurveTop:4.2, samples=200] {1/(x-1)};

% p+3(m(p-1)-1)=0: start exactly where it meets m=3.2
\addplot[dashed, thick, domain=\pGlineTop:3, samples=200] {(3-x)/(3*(x-1))};

% Vertical interval boundaries (dashed)
\addplot[dashed, thick] coordinates {(1,0) (1,3.2)};
\addplot[dashed, thick] coordinates {(1.5,0) (1.5,3.2)};
\addplot[dashed, thick] coordinates {(2,0) (2,3.2)};

% ------------------------------------------------------------
% REGION 1 (hatched, 45 degrees)
% ------------------------------------------------------------

% (i) m(p-1)>1, p>2, m>0  =>  m > 1/(p-1),  p in (2,4.2]
\addplot[
draw=none,
pattern=my45hatch,
pattern color=black
] fill between[
of=curve and top,
soft clip={domain=2:4.2}
];

% (ii) m(p-1)<1, 1<p<2, m>0  =>  0 < m < 1/(p-1)
% The curve exceeds m=3.2 for p < 1.3125, so we split the fill.

% (ii-a) 1<p<1.3125: fill the entire strip 0..3.2
\addplot[
draw=none,
pattern=my45hatch,
pattern color=black
] fill between[
of=top and bottom,
soft clip={domain=1.02:\pCurveTop}
];

% (ii-b) 1.3125<p<2: fill below the curve (0..curve)
\addplot[
draw=none,
pattern=my45hatch,
pattern color=black
] fill between[
of=curve and bottom,
soft clip={domain=\pCurveTop:2}
];

% ------------------------------------------------------------
% REGION 2 (transparent gray overlay), fill opacity = 0.4
% ------------------------------------------------------------

% (a) m(p-1)>1 and 1.5<p<2  =>  m > 1/(p-1)
\addplot[
fill=mygray, fill opacity=0.4,
draw=none
] fill between[
of=curve and top,
soft clip={domain=1.5:2}
];

% (b) m(p-1)<1 and p+3(m(p-1)-1)>0 and m>0
%     <=> m < 1/(p-1) AND m > (3-p)/(3(p-1)) AND m>0.

% (b0) For \pGlineTop < p < \pCurveTop: upper bound is the top (3.2), not the curve.
%      Fill: gline < m < top
\addplot[
fill=mygray, fill opacity=0.4,
draw=none
] fill between[
of=top and gline,
soft clip={domain=\pGlineTop:\pCurveTop}
];

% (b1) For \pCurveTop < p < 3: upper bound is the curve.
%      Fill: gline < m < curve
\addplot[
fill=mygray, fill opacity=0.4,
draw=none
] fill between[
of=curve and gline,
soft clip={domain=\pCurveTop:3}
];

% (b2) For 3 < p < 4.2: gline <= 0, so the condition m>0 dominates.
%      Fill: 0 < m < curve
\addplot[
fill=mygray, fill opacity=0.4,
draw=none
] fill between[
of=curve and bottom,
soft clip={domain=3:4.2}
];

% (c) m(p-1)>1 and p>2 (explicit gray overlay on the right part)
\addplot[
fill=mygray, fill opacity=0.4,
draw=none
] fill between[
of=curve and top,
soft clip={domain=2:4.2}
];

% ------------------------------------------------------------
% Labels on boundary lines (Gamma labels on the plot)
% ------------------------------------------------------------

\node[anchor=north west] at (axis cs:2.4,{0.67}) {$\Gamma_1$};
\node[anchor=south west] at (axis cs:2.7,{(3-2.7)/(3*(2.7-1))-0.02}) {$\Gamma_2$};

% ------------------------------------------------------------
% Legend text (no line samples)
% ------------------------------------------------------------

\node[
anchor=north east,
fill=white, fill opacity=0.85, text opacity=1,
inner sep=2pt,
font=\small,
align=left
] at (rel axis cs:0.98,0.98) {%
	\tikz{\draw[dashed,thick] (0,0)--(0.2,0);} \ $\Gamma_1:\ \lambda=1$\\
	\tikz{\draw[dashed,thick] (0,0)--(0.2,0);} \ $\Gamma_2:\ p+N(\lambda-1)=0$%
};

\end{axis}
\end{tikzpicture}

\caption{Illustration of the novelty of the present results. The hatched region represents the range of exponents $(p,m)$ for which the local H\"older continuity of solutions to \eqref{eq1.0} was established in the existing literature.  The solid gray region represents the range covered by the present work (for the case $N=3$).}
\label{fig_1}
\end{figure}

\subsection{Structure of the paper} In Section \ref{prelim}, we set the main notation and collect all the basic ingredients of our proofs: Energy estimates, De Giorgi-type Lemmas, Expansion of Positivity  and some other integral estimates. For most of the auxiliary results in Section~\ref{prelim} (except the ones related to estimates leading to integral Harnack-type inequalities), we do not provide the proofs but instead refer the reader to the corresponding results for the equation \eqref{dnleq3}.
These results are applicable here in essentially the same form after the appropriate change of variables. In Section~\ref{Main components}, we present and prove $L^1$-$L^1$ inequalities, the main components of the proof of the reduction of oscillation given by Proposition~\ref{pr1.1}. These Harnack-type estimates are derived separately in the case $\lambda<1$ and ($\lambda>1$ and $ p<2$). In Section \ref{sec-red-osc}, we show through Proposition \ref{pr1.1} that if the local supremum of $u$ is close enough to the oscillation, then we have a reduction of the oscillation (referring to alternative 2 of the paragraph {\bf {\it The issue of H\"older Continuity}} of the Introduction). Finally, in Section 5 we prove the main Theorem \eqref{th1.1}, by completing the alternative (alternative 1 of the paragraph {\bf {\it The issue of H\"older Continuity}} of the Introduction).

\section{Notations and Preliminary Results}\label{prelim}

\subsection{Notations}

In the notation of equation \eqref{eq1.1}, \eqref{eq1.2}, we refer to
$N$, $m$, $p$, $K_1$, and $K_2$ as the {\it data} of the equation. We also define

\begin{equation*}
\lambda :=m(p-1) \qquad  \mathrm{and} \qquad m^-:=\min(1,m).
%\\M&:=\sup\limits_{\Omega_T}u.
\end{equation*}

We use the symbol $\gamma$ to denote a generic positive constant that can be
determined a priori solely in terms of the {\it data} (that is to say $N,m,p,K_1$ and $K_2$). The value of $\gamma$ may change from line to line.

\medskip
\noindent For a point $(x_0,t_0)\in\Omega_T$ and parameters $r,\eta>0$, we define the backward
parabolic cylinder
\begin{equation*}
Q_{r,\eta}(x_0,t_0):=B_r(x_0)\times(t_0-\eta,t_0),\qquad B_r(x_0):=\{x\in\mathbb{R}^N:|x-x_0|< r\}.
\end{equation*}
We will also frequently use the intrinsic scaling:
\begin{equation*}
\theta(k,r):=k^{1-\lambda}r^p, \qquad k,r>0.
\end{equation*}
%\begin{equation*}
%\begin{aligned}
%B_r(x_0)&:=\{x\in\mathbb{R}^N:|x-x_0|< r\},
%\\(x_0,t_0)+Q(r,\eta)&:=B_r(x_0)\times(t_0-\eta,t_0),
%\\ (x_0,t_0)+Q_r(\theta)&:=(x_0,t_0)+Q(r,\theta r^p)
%\end{aligned}
%\end{equation*}
Finally, for any measurable set $E\subset\mathbb{R}^{N}$ with positive measure, we denote by
\begin{equation*}
\fint\limits_E u\,dx:=\frac{1}{|E|}\int\limits_E u\,dx
\end{equation*}
the integral average of $u$ over $E$.

\subsection{Auxiliary Lemma}

Define the function
$$g_{\pm}(u^{m}, k^{m}):=\int\limits^{(u^{m}-k^{m})_{\pm}}_0\big(k^{m}\pm s\big)^{\frac{1}{m}-1}\,s\,ds,\quad m>0.$$
The following lemma can be extracted from \cite[Lemma~3.2]{BogDuzGiaLiaSch}.

\begin{lemma}\label{lem2.1}
There exists a constant $\gamma>0$, depending only on $m$, such that
\begin{equation*}
\frac{1}{\gamma}\,(k^m+u^m)^{\frac{1}{m}-1}(u^m-k^m)^2_{\pm}\leqslant g_{\pm}(u^m, k^m)\leqslant \gamma\, (k^m+u^m)^{\frac{1}{m}-1}(u^m-k^m)^2_{\pm}.
\end{equation*}
\end{lemma}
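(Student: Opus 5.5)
Both sides follow from elementary two-sided bounds on the integrand; by homogeneity we reduce to a one-variable comparison. Set $a:=k^m\geqslant 0$, $d:=(u^m-k^m)_{\pm}\geqslant 0$, so that
\[
g_\pm=\int_0^d (a\pm s)^{\frac1m-1}s\,ds,\qquad u^m=a\pm d .
\]
The goal is to show $g_\pm\simeq (2a\pm d)^{\frac1m-1}d^2$, noting that $k^m+u^m=2a\pm d$. If $d=0$ both sides vanish, so we assume $d>0$.

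\emph{The ``$+$'' case.} For $s\in[0,d]$ we have $a\leqslant a+s\leqslant a+d$, and $a+d\leqslant 2a+d\leqslant 2(a+d)$. The factor $(a+s)^{\frac1m-1}$ is monotone in $s$, increasing if $m<1$ and decreasing if $m>1$.

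For the upper bound when $m<1$, bound the factor by $(a+d)^{\frac1m-1}\leqslant(2a+d)^{\frac1m-1}$ and integrate $s$ to get $d^2/2$.

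For the lower bound when $m<1$, restrict to $s\in[d/2,d]$. There $a+s\geqslant (a+d)/2\geqslant (2a+d)/4$, which gives
\[
g_+\geqslant c\,(2a+d)^{\frac1m-1}\,\tfrac{3d^2}{8}.
\]
For $m>1$ the roles swap. The lower bound uses $(a+s)^{\frac1m-1}\geqslant (a+d)^{\frac1m-1}\geqslant(2a+d)^{\frac1m-1}$. The upper bound again uses the half interval $[d/2,d]$: we split off $[0,d/2]$, where $s\leqslant d/2$, and compare. Alternatively one can bound $\int_0^d(a+s)^{\frac1m-1}s\,ds$ directly by distinguishing the regimes $a\geqslant d$ and $a<d$. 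When $a\geqslant d$, the factor $a+s$ is comparable to $a$ and to $2a+d$. When $a<d$, we compute
\[
\int_0^d s^{\frac1m}\,ds\simeq d^{\frac1m+1}\simeq (2a+d)^{\frac1m-1}d^2,
\]
using $a+s\geqslant s$ and, since $\frac1m-1<0$, $(a+s)^{\frac1m-1}\leqslant s^{\frac1m-1}$. Note that $\frac1m>0$, so this integral is finite.

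\emph{The ``$-$'' case.} This case is the delicate one, since $a-s$ can approach $0$ when $d=a$ (i.e.\ $u=0$), and then the factor $(a-s)^{\frac1m-1}$ is singular for $m>1$. Here $d\leqslant a$ and $2a-d\in[a,2a]$, so the target is $a^{\frac1m-1}d^2$.

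If $d\leqslant a/2$, then $a-s\in[a/2,a]$ on $[0,d]$, and the estimate follows immediately with constants $2^{|\frac1m-1|}$.

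If $a/2<d\leqslant a$, substitute $\sigma=a-s$ to obtain
\[
g_-=\int_{a-d}^{a}\sigma^{\frac1m-1}(a-\sigma)\,d\sigma .
\]
Since $a-d\leqslant a/2$, this integral is bounded above by
\[
a\int_0^a\sigma^{\frac1m-1}\,d\sigma=m\,a^{\frac1m+1}\leqslant 4m\,a^{\frac1m-1}d^2 .
\]
It is bounded below by the contribution of $\sigma\in[a/2,3a/4]$, which is $\gtrsim a^{\frac1m-1}\cdot a\cdot a\gtrsim a^{\frac1m-1}d^2$. This computation is the main point to check, because integrability at $\sigma=0$ relies on $\frac1m>0$. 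All the constants obtained depend only on $m$, which proves the lemma.
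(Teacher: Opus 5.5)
Your proof is correct, and it takes a different route from the paper only in that the paper gives no argument of its own. Each step checks out, with constants depending only on $m$:
\begin{itemize}
\item the reduction to $a=k^m$, $d=(u^m-k^m)_\pm$, using $k^m+u^m=2a\pm d$;
\item the monotonicity bounds on the weight in the ``$+$'' case;
\item the two regimes $a\geqslant d$ and $a<d$ for the upper bound when $m>1$;
\item in the ``$-$'' case, the split $d\leqslant a/2$ versus $a/2<d\leqslant a$, with the substitution $\sigma=a-s$.
\end{itemize}
The case $m=1$, where the weight is constant, is trivial, so leaving it out of your monotonicity dichotomy costs nothing.

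The paper simply quotes \cite[Lemma~3.2]{BogDuzGiaLiaSch}. That reference applies because, with $w=u^m$ (which turns \eqref{eq1.0} into \eqref{dnleq3} with $q=1/m$) and $z=k^m\pm s$, one has
\[
g_\pm(u^m,k^m)=\pm\int_{k^m}^{w}z^{q-1}(z-k^m)_\pm\,dz,
\]
which is the standard energy function for \eqref{dnleq3}. The citation is shorter and ties the lemma to the $(p,q)$ literature from which the paper imports its other auxiliary results. Your computation is self-contained, gives explicit constants, and shows exactly where $m>0$ is needed: integrability of $\sigma^{\frac1m-1}$ at $\sigma=0$ in the ``$-$'' case.

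One small point. Your first sentence for the $m>1$ upper bound (``split off $[0,d/2]$ and compare'') is too vague to stand alone, though the alternative argument you give does settle it. A quicker way is to note that $s\mapsto s(a+s)^{\frac1m-1}$ is nondecreasing, since its derivative is $(a+s)^{\frac1m-2}(a+s/m)\geqslant 0$. This gives at once
\[
g_+\leqslant d^2(a+d)^{\frac1m-1}\leqslant 2^{1-\frac1m}(2a+d)^{\frac1m-1}d^2 .
\]
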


% \subsection{Mollified weak formulation} \label{approx}
% It is technically convenient to have a formulation of a nonnegative, local, weak (sub) super-solution that involves $u_{t}$. Let $0 \leq \rho(x)\in C_{0}^{\infty}(\mathbb{R}^{N})$ be a standard mollifier vanishing outside the unit ball and normalized in $\mathbb{R}^N$. Let us define the rescaled functions
% \[ \rho_{h}(x):= h^{-N}\rho\left(x/h\right), \quad
% u_{h}(x, t):= h^{-1}\int\limits_{t}\limits^{t+h}\int\limits_{\mathbb{R}^{N}}u(y, \tau)\rho_{h}(x-y)\,dy d\tau .\]
% \noindent We fix $t \in (0, T)$ and let $h>0$ be so small that $0<t<t+h<T$.
% In \eqref{eq1.3} we take $t_{1}=t$, $t_{2}=t+h$ and replace $\zeta$ by $\int\limits_{\mathbb{R}^{N}}\zeta(y, t)\rho_{h}(x-y)\,dy$. Dividing by $h$, since the testing function independent of  $\tau$, we obtain that the function $u$ must satisfy
% \begin{equation}\label{eq1.4}
% \int\limits_{E\times \{t\}} \left(\frac{\partial [u]_{h}}{\partial t}\, \zeta+[\mathbf{A}(x, t, u, D u )]_{h} D \zeta\right)dx
% \leqslant (\geqslant)\,0,
% \end{equation}
% for all $t \in (0, T-h)$ and for all  $\zeta \in W^{1,p}_{0}(E)$, $\zeta \geqslant 0$.

\subsection{Local Energy Estimates}

%For the equation \eqref{dnleq3} (in $(p,q)$-settings for the signed solutions) can be found in \cite[Proposition 3.1]{BogDuzLia}.}

\begin{lemma}\label{lem2.2}
Let $u$ be a nonnegative, local weak subsolution (supersolution) to \eqref{eq1.1}-\eqref{eq1.2} with $p>1$ and $m>0$. Then there exists a constant $\gamma>0$, depending only on the data, such that for every cylinder $Q_{r,\eta}(y,\tau)\subset\Omega_{T}$, every $k\in\mathbb{R}_{+}$, and every piecewise smooth cutoff function $\zeta$ vanishing on $\partial B_{r}(y)$ and satisfying $0\leqslant \zeta \leqslant 1$, the following inequality holds:
\begin{equation}\label{eq2.1}
\begin{aligned}
&\sup\limits_{\tau-\eta\leqslant t \leqslant \tau}\int\limits_{B_r(y)}g_{\pm}(u^{m^-}, k^{m^-})\,\zeta^{p}\,dx+\gamma^{-1} \iint\limits_{Q_{r,\eta}(y, \tau)}u^{(m-m^-)(p-1)}|D (u^{m^-}-k^{m^-})_{\pm}|^{p}\zeta^{p}\,dx\,dt
\\&\leqslant\int\limits_{B_r(y)\times\{\tau-\eta\}}g_{\pm}(u^{m^-}, k^{m^-})\,\,\zeta^{p}\,dx+\gamma\iint\limits_{Q_{r,\eta}(y, \tau)} g_{\pm}(u^{m^-}, k^{m^-})\,|\zeta_t|\,dx dt
\\&\quad+\gamma \iint\limits_{Q_{r,\eta}(y, \tau)}u^{(m-m^-)(p-1)}(u^{m^-}-k^{m^-})_{\pm}^{p}|D \zeta|^{p}\,dx\,dt.
\end{aligned}
\end{equation}
\end{lemma}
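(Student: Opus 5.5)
The plan is the standard Caccioppoli argument: test \eqref{eq1.3} with $\zeta_1=\pm\big(u^{m^-}-k^{m^-}\big)_{\pm}\zeta^p$, handling the lack of time regularity of $u$ by a Steklov average. For the supersolution/subsolution inequality with a nonnegative test function we use $(u^{m^-}-k^{m^-})_-\zeta^p$ for supersolutions and $(u^{m^-}-k^{m^-})_+\zeta^p$ for subsolutions. Since $u\in C_{\rm loc}(0,T;L^{1+m^-}_{\rm loc})$ and $u^{m^-}\in L^p(W^{1,p})$, these are admissible after mollification in time.

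\textbf{Step 1: the parabolic term.} We rewrite \eqref{eq1.3} in Steklov-averaged form with $[u]_h$ and integrate over $(\tau-\eta,t)$. With $v=u^{m^-}$ and $u=v^{1/m^-}$, the formal identity is
\[
\partial_t u\,(v-k^{m^-})_{\pm}\,(\pm1)=\partial_t\, g_\pm(u^{m^-},k^{m^-}).
\]
To see this, note that $\frac{d}{dv}g_\pm=\pm\frac{1}{m^-}\cdots$ up to the factor $\frac{1}{m^-}$; indeed $du=\frac1{m^-}v^{1/m^- -1}dv$, so $\int (v-k^{m^-})_\pm\,du$ equals $\frac1{m^-}g_\pm$ after the substitution $s=(v-k^{m^-})_\pm$. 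The constant $1/m^-$ is absorbed in $\gamma$. Rigorously, we use the convexity-type inequality
\[
\partial_t[u]_h\,\big([u]_h^{m^-}-k^{m^-}\big)_\pm \;\gtrless\; \partial_t G_\pm([u]_h)
\]
(or the standard exponential-mollification argument of \cite{BogDuzGiaLiaSch}), then integrate by parts in time against $\zeta^p$ and let $h\to0$. This yields $\int g_\pm\zeta^p$ at time $t$, minus the same at $\tau-\eta$, minus $\iint g_\pm\, p\zeta^{p-1}\zeta_t$. The last term is bounded by $\gamma\iint g_\pm|\zeta_t|$.

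\textbf{Step 2: the elliptic term.} We have $D\zeta_1=\pm D(v-k^{m^-})_\pm\,\zeta^p\pm p(v-k^{m^-})_\pm\zeta^{p-1}D\zeta$. On the set where $(v-k^{m^-})_\pm>0$, $D(v-k^{m^-})_\pm=\pm Dv$. By the first condition in \eqref{eq1.2}, the principal part is therefore at least
\[
K_1\, u^{(m-m^-)(p-1)}\,|D(v-k^{m^-})_\pm|^p\,\zeta^p .
\]
The cross term is bounded by the second condition in \eqref{eq1.2}:
\[
pK_2\,u^{(m-m^-)(p-1)}\,|Dv|^{p-1}\,\zeta^{p-1}(v-k^{m^-})_\pm|D\zeta|.
\]
Young's inequality, with exponents $p/(p-1)$ and $p$ and with the common weight $u^{(m-m^-)(p-1)}$ split as $w^{(p-1)/p}\cdot w^{1/p}$, bounds this by
\[
\tfrac{K_1}{2}\,u^{(m-m^-)(p-1)}|D(v-k^{m^-})_\pm|^p\zeta^p+\gamma\, u^{(m-m^-)(p-1)}(v-k^{m^-})_\pm^p|D\zeta|^p .
\]
The first piece is absorbed into the left-hand side.

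\textbf{Step 3: conclusion.} Taking the supremum over $t\in(\tau-\eta,\tau)$ in the energy term, and the full interval in the gradient term, and adding the two resulting inequalities gives \eqref{eq2.1}.

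The main obstacle is Step 1, namely justifying the time derivative. There are two difficulties: $u_t$ is not known to exist, and the test function involves $u^{m^-}$ rather than $u$. The weight $u^{(m-m^-)(p-1)}$ is also potentially singular or degenerate. The weight causes no problem in Step 2, because it appears homogeneously in both structure conditions, and local boundedness ensures all integrals are finite. For Step 1, I would use the time mollification $[\![u]\!]_h$ with the exponential kernel, for which $\partial_t[\![u]\!]_h=\frac1h(u-[\![u]\!]_h)$. Monotonicity of $s\mapsto (s^{m^-}-k^{m^-})_\pm$ then gives the one-sided inequality comparing $\partial_t[\![u]\!]_h\,(\cdot)_\pm$ with $\partial_t g_\pm([\![u]\!]_h)$, as in \cite{BogDuzGiaLiaSch}. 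The passage to the limit uses $u\in C(L^{1+m^-})$ and Lemma~\ref{lem2.1} to control $g_\pm$ by $L^{1+m^-}$ norms.
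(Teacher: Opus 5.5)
Your proposal is correct and follows the paper's own proof. You test \eqref{eq1.3} with $\pm(u^{m^-}-k^{m^-})_\pm\zeta^p$, justify the time term by Kinnunen--Lindqvist type averaging (you usefully make explicit the identity $\int (v-k^{m^-})_\pm\,du=\frac{1}{m^-}g_\pm$ and the monotonicity trick for $\partial_t[\![u]\!]_h$), and conclude with \eqref{eq1.2} and Young's inequality. One bookkeeping remark applies equally to the paper's sketch: adding the supremum bound to the full-interval gradient bound gives a factor $2$, not $1$, in front of $\int_{B_r(y)\times\{\tau-\eta\}}g_\pm\zeta^p\,dx$. This is harmless, and coefficient $1$ cannot hold in general because the supremum already includes $t=\tau-\eta$: for the heat equation with $(u-k)_+$ supported where $\zeta\equiv1$ and $\zeta_t=0$, the gradient term is positive while every other right-hand term vanishes.
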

\begin{proof}
Test \eqref{eq1.3} by $\pm (u^{m^-}-k^{m^-})_{\pm}\zeta^{p}$ and integrate over $B_{r}(y)\times(\tau-\eta,t)$ for $t\in(\tau-\eta,\tau)$. The use of such a test function is justified, modulus a standard averaging process (see \cite{KL} for the original idea and \cite{CVV} for the details). Using condition \eqref{eq1.2} and  Young's inequality, we obtain the required \eqref{eq2.1}.
\end{proof}

\subsection{De Giorgi Type Lemmas}

In this subsection, we present several well-known De Giorgi type lemmas that are used in the proof of Proposition~\ref{pr1.1}. The proofs of these lemmas are mostly standard and follow classical arguments. Therefore, we do not repeat them here and instead refer the reader to existing results in the literature.

 \noindent We introduce the cylinder $Q_{2\,r, 2\eta}(y, \tau)\subset Q_{2r, 2r}(y, \tau) \subset \Omega_T$, and define the values $\mu^{\pm}$ and $\omega$ by
\begin{equation*}
\mu^+\geqslant \sup\limits_{Q_{r, \eta}(y, \tau)} u,
\qquad \mu^-\leqslant \inf\limits_{Q_{r, \eta}(y, \tau)} u,
\qquad \omega\geqslant \mu^+-\mu^-.
\end{equation*}

\noindent The following result for equation~\eqref{dnleq3} can be found in \cite[Lemma~6.1]{BogDuzGiaLiaSch} and \cite[Lemma~3.2]{Hen2022}. The corresponding proof for the anisotropic version of \eqref{eq1.1}, \eqref{eq1.2} is given in \cite[Lemma~2.4]{CiaHenSavSkr2025-1}.

\begin{lemma}\label{lem2.5}
Let $u$ be a nonnegative, weak supersolution to \eqref{eq1.1}-\eqref{eq1.2}. For some $b\in(0,1]$ and $\xi\in(0,1)$, consider the cylinder $Q_{r,b\,\theta(\xi\omega,r)}(y, \tau) \subset \Omega_T$.
Then there exists a constant $\nu_{-}\in(0,1)$, depending only on the data and on $b$, such that if
\begin{equation}\label{eq2.5}
|Q_{r,b\,\theta(\xi\omega,r)}(y, \tau)\cap\{u\leqslant \xi\,\omega\}|\leqslant \nu_- |Q_{r,b\,\theta(\xi\omega,r)}(y, \tau)|,
\end{equation}
then 
\begin{equation}\label{eq2.6}
u(x,t)\geqslant \frac{1}{2}\, \xi\,\omega,\quad (x,t)\in Q_{\frac{1}{2}r,\frac{1}{2}b\,\theta(\xi\omega,r)}(y, \tau).
\end{equation}
Moreover,
\begin{equation*}
\nu_-=\frac{1}{\gamma}\, b^{\frac{N}{p}}
\end{equation*}
with some constant $\gamma>0$ depending only on the data.
\end{lemma}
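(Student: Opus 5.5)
This is the standard De Giorgi iteration on shrinking cylinders, run with the energy estimate of Lemma \ref{lem2.2} for the truncations $(u^{m^-}-k^{m^-})_-$. Write $k=\xi\omega$ and $\theta=\theta(\xi\omega,r)$. For $n\geqslant0$ set
\[
r_n=\tfrac r2+\tfrac r{2^{n+1}},\qquad \eta_n=\tfrac b2\theta+\tfrac b{2^{n+1}}\theta,\qquad k_n=\tfrac k2+\tfrac k{2^{n+1}},
\]
and let $Q_n=Q_{r_n,\eta_n}(y,\tau)$. Take cutoffs $\zeta_n$ equal to $1$ on $Q_{n+1}$ and vanishing on the parabolic boundary of $Q_n$, with $|D\zeta_n|\leqslant\gamma2^n/r$ and $|\partial_t\zeta_n|\leqslant\gamma2^n/(b\theta)$. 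Let $Y_n=|Q_n\cap\{u\leqslant k_n\}|/|Q_n|$. The goal is the recursion
\[
Y_{n+1}\leqslant\gamma\,b^{-\frac Np}\,C^n\,Y_n^{1+\frac Np}.
\]
The standard fast geometric convergence lemma then gives $Y_n\to0$ as soon as $Y_0\leqslant\gamma^{-1}b^{N/p}$. This is exactly $\nu_-$, and it yields \eqref{eq2.6}.

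\textbf{Energy estimate.} Apply \eqref{eq2.1} with the minus sign, level $k_n$ and cutoff $\zeta_n$. The initial term drops because $\zeta_n$ vanishes at $t=\tau-\eta_n$. On $\{u<k_n\}$ we have $u\leqslant k$, so two bounds are available.
\begin{itemize}
\item By Lemma \ref{lem2.1}, $g_-\leqslant\gamma k^{m^-(\frac1{m^-}-1)}\cdot k^{2m^-}=\gamma k^{1+m^-}$. The time term is therefore at most $\gamma 2^n k^{1+m^-}(b\theta)^{-1}|A_n|$, where $A_n=Q_n\cap\{u<k_n\}$.
\item The factor $u^{(m-m^-)(p-1)}$ is at most $k^{(m-m^-)(p-1)}$, since $m\geqslant m^-$. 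The gradient term is therefore at most $\gamma2^{np}k^{(m-m^-)(p-1)+m^-p}r^{-p}|A_n|$.
\end{itemize}
With $\theta=k^{1-\lambda}r^p$ the two terms balance. Indeed, $k^{1+m^-}/\theta=k^{m^-+\lambda}r^{-p}$, and $(m-m^-)(p-1)+m^-p=\lambda+m^-$. Hence the right-hand side is at most $\gamma2^{np}b^{-1}k^{\lambda+m^-}r^{-p}|A_n|$, using $b\leqslant1$.

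\textbf{Main obstacle.} The left-hand side must be made homogeneous despite the weights.
\begin{itemize}
\item On $\{u<k_{n+1}\}\subset\{u<k_n\}$ we have $u\geqslant0$ and $k_n-u\geqslant k/2^{n+2}$. Lemma \ref{lem2.1} then gives $g_-\geqslant\gamma^{-1}k^{1-m^-}(k^{m^-}-u^{m^-})^2$.
\item The gradient weight $u^{(m-m^-)(p-1)}$ degenerates as $u\to0$ when $m>1$. I would avoid it by working with $v=(u^{m^-}-k_n^{m^-})_-$ only where $u\geqslant k/4$. Only then is the weight comparable to $k^{(m-m^-)(p-1)}$.
\item To reach that set, first truncate: replace $u$ by $\max(u,k/4)$, or run the levels on $w=(k_n^{m^-}-u^{m^-})_-$ restricted to $\{u>k/4\}$. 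Since $k_n\geqslant k/2$, the set $\{u<k_{n+1}\}$ can be split into $\{u\leqslant k/4\}$, which I would bound directly by $|A_n|$ (it sits inside the same level set), and its complement.
\item If that fails, I would fall back on the argument of \cite[Lemma~2.4]{CiaHenSavSkr2025-1}, where this weight issue is handled for the anisotropic analogue.
\end{itemize}

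\textbf{Parabolic embedding.} Once the left-hand side is bounded below by $k^{1-m^-}\sup_t\int w^2+k^{(m-m^-)(p-1)}\iint|Dw|^p$, I would apply the parabolic Sobolev embedding in $L^{p\frac{N+2}{N}}$ form, adapted to mixed $L^2$–$L^p$ energies. On $A_{n+1}$ the truncation satisfies $w\gtrsim k^{m^-}2^{-n}$. Combining this with Hölder's inequality and normalising by $|Q_n|\sim r^N b\theta$ gives
\[
Y_{n+1}\leqslant\gamma C^n\big(b^{-1}\big)^{\frac Np\cdot\frac{\cdot}{\cdot}}\,Y_n^{1+\frac{p}{N+2}\cdots}.
\]
After tracking the powers of $b$ through the time-length normalisation, the exponent becomes $b^{-N/p}$ once the choice $\theta=k^{1-\lambda}r^p$ cancels all powers of $k$. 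This bookkeeping of $b$, with the weights homogenised via the intrinsic scaling, is the step I expect to need the most care. It fixes the explicit form $\nu_-=\gamma^{-1}b^{N/p}$.
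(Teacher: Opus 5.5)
There is a genuine gap, and it is exactly the step you flagged. For $m>1$, the left-hand side of \eqref{eq2.1} for $(u-k_n)_-$ only controls $\iint u^{(m-1)(p-1)}|D(u-k_n)_-|^p\zeta^p$, whose weight vanishes where $u$ is small. This is not a marginal case: the paper uses the lemma only in Section~\ref{sec-case2}, where $\lambda>1$ and $p<2$ force $m>1$. (For $m\leqslant1$ the weight is $\equiv1$ and your scheme goes through.) Neither of your concrete repairs works. First, $\max(u,k/4)$ is in general not a supersolution, since the maximum of a supersolution and a constant need not be one, so Lemma~\ref{lem2.2} cannot be applied to it. Second, bounding $|Q_{n+1}\cap\{u\leqslant k/4\}|$ by $|A_n|$ is a linear estimate. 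It gives $Y_{n+1}\leqslant Y_n+\dots$ and destroys the superlinear recursion precisely on the set you must show is null.

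The missing idea is to truncate a different power: test with $(u^m-k_n^m)_-\zeta^p$, i.e.\ work with $v=u^m$, which is the $(p,q)$-formulation of the works cited for this lemma. Since $\mathbf{A}\cdot Du^m=mu^{m-1}\mathbf{A}\cdot Du\geqslant m^{1-p}K_1|Du^m|^p$ and $|\mathbf{A}|\leqslant m^{1-p}K_2|Du^m|^{p-1}$, the structure becomes unweighted, and you get
\[
\sup_t\int g_-(u^m,k_n^m)\zeta^p\,dx+\iint|D(u^m-k_n^m)_-|^p\zeta^p\,dx\,dt\leqslant\gamma\iint g_-(u^m,k_n^m)|\zeta_t|\,dx\,dt+\gamma\iint(u^m-k_n^m)_-^p|D\zeta|^p\,dx\,dt .
\]
By Lemma~\ref{lem2.1} with exponent $m$, and since $k_n^m\leqslant k_n^m+u^m\leqslant 2k_n^m$ on $\{u<k_n\}$, one has $g_-\simeq k^{1-m}(k_n^m-u^m)^2$. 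Both right-hand terms are then at most $\gamma 2^{np}b^{-1}k^{m+\lambda}r^{-p}|A_n|$, and $(u^m-k_n^m)_-\geqslant\gamma^{-1}2^{-n}k^m$ on $A_{n+1}$, so the iteration is homogeneous. A two-sided truncation $\min\{(u-k_n)_-,k_n-\tfrac{k}{4}\}$ as test function can also be made to work. However, the term $\iint_{\{u\leqslant k/4\}}|\mathbf{A}||D\zeta|\zeta^{p-1}$ then cannot be absorbed and has to be bounded separately through the weighted estimate \eqref{eq2.1}; you do not address this.

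Your $b$-bookkeeping is also wrong as written. The recursion you announce, $Y_{n+1}\leqslant\gamma b^{-N/p}C^nY_n^{1+N/p}$, is not what the embedding produces. Fast geometric convergence applied to it would give the threshold $Y_0\lesssim b$, not $b^{N/p}$, and your final display is only a placeholder. Carrying the computation out with $|Q_n|\simeq r^{N+p}\,b\,k^{1-\lambda}$, all powers of $k$ and $r$ cancel and one obtains $Y_{n+1}\leqslant\gamma C^n b^{-\frac{N}{N+2}}Y_n^{1+\frac{p}{N+2}}$. It is this recursion that yields $\nu_-=\gamma^{-1}b^{\frac{N}{N+2}\cdot\frac{N+2}{p}}=\gamma^{-1}b^{N/p}$.
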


\noindent The following lemma is a variant of a De Giorgi type result adapted to our setting. For the proof in the anisotropic case of equations \eqref{eq1.1}, \eqref{eq1.2}, we refer the reader to \cite[Lemma~2.5]{CiaHenSavSkr2025-1}.

\begin{lemma}\label{lem2.6}
Let $u$ be a nonnegative, weak subsolution to \eqref{eq1.1}-\eqref{eq1.2}. For some $b\in(0,1]$, consider the cylinder $Q_{r,b\,\theta(\omega,r)}(y, \tau) \subset \Omega_T$.
Then there exists a constant $\nu_{+}\in(0,1)$, depending only on the data and on $b$, such that if
\begin{equation}\label{eq2.12}
|Q_{r,b\,\theta(\omega,r)}(y, \tau)\cap\{u\geqslant \mu^{+}-\frac{1}{4}\omega\}|\leqslant \nu_+ |Q_{r, b\,\theta(\omega,r)}(y, \tau)|,
\end{equation}
then
\begin{equation}\label{eq2.13}
u(x, t)\leqslant \mu^{+}- \frac{1}{8} \omega,\quad (x,t)\in Q_{\frac{1}{2}r,\frac{1}{2}b\,\theta(\omega,r)}(y, \tau),
\end{equation}
provided that
\begin{equation}\label{eq2.14}
\frac{1}{2}\,\omega\leqslant \mu^+\leqslant 2\,\omega.
\end{equation}
Moreover,
\begin{equation*}
\nu_+:=\frac{1}{\gamma}\,b^{\frac{N}{p}}.
\end{equation*}
with some constant $\gamma>0$ depending only on the data.
\end{lemma}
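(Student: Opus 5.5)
The statement to prove is Lemma~\ref{lem2.6}: a De Giorgi-type lemma near the supremum for subsolutions. I would use the standard De Giorgi iteration on the truncations $(u^{m^-}-k^{m^-})_+$ with the energy estimate of Lemma~\ref{lem2.2}. The key observation is that the structural weight $u^{(m-m^-)(p-1)}$ and the ratios between $u$, $u^{m^-}$ and $g_+$ are comparable to fixed powers of $\omega$ on the relevant level sets. This comes from \eqref{eq2.14}: on $\{u\geqslant \mu^+-\frac14\omega\}$ we have $\frac14\omega\leqslant u\leqslant 2\omega$, so the equation behaves like a nondegenerate $p$-Laplacian-type equation there, once time is rescaled by $\theta(\omega,r)=\omega^{1-\lambda}r^p$.

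\textbf{Levels and cylinders.} Set
\[
k_j=\mu^+-\tfrac18\omega-\tfrac{1}{2^{j+3}}\omega,\qquad r_j=\tfrac r2+\tfrac{r}{2^{j+1}},\qquad \eta_j=\tfrac12 b\theta(\omega,r)+\tfrac{1}{2^{j+1}}b\theta(\omega,r).
\]
Let $Q_j=Q_{r_j,\eta_j}(y,\tau)$, and take cutoffs $\zeta_j$ vanishing on the parabolic boundary of $Q_j$ with $|D\zeta_j|\leqslant \gamma 2^j/r$ and $|\partial_t\zeta_j|\leqslant\gamma 2^{j}/(b\theta(\omega,r))$. Because $\zeta_j$ vanishes at the bottom, the initial term in \eqref{eq2.1} disappears.

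\textbf{Comparability on the level sets.} On $\{u>k_j\}$ we have $u\simeq\omega$. Hence:
\begin{itemize}
\item $(u^{m^-}-k_j^{m^-})_+\simeq \omega^{m^--1}(u-k_j)_+$;
\item the weight $u^{(m-m^-)(p-1)}\simeq \omega^{(m-m^-)(p-1)}$;
\item by Lemma~\ref{lem2.1}, $g_+\simeq \omega^{m^--1}(u-k_j)_+^2$ (indeed $(k^{m^-}+u^{m^-})^{1/m^--1}\simeq\omega^{1-m^-}$).
\end{itemize}
Writing $w=(u-k_j)_+\leqslant \omega/4$, the estimate \eqref{eq2.1} becomes
\[
\omega^{m^--1}\sup_t\int w^2\zeta^p+\omega^{\lambda-p+ (m^--1)}\iint|Dw|^p\zeta^p\leqslant \gamma 2^{jp}\Big(\frac{\omega^{m^-+1}}{b\theta r^0}+\frac{\omega^{\lambda+m^--1}}{r^p}\Big)|A_j|.
\]
Here $A_j=Q_j\cap\{u>k_j\}$. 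This uses $m^-(p-1)+(m-m^-)(p-1)=\lambda$ and $w\leqslant\omega$. With $\theta=\omega^{1-\lambda}r^p$ both right-hand terms are of size $\omega^{\lambda+m^--1}r^{-p}(1+b^{-1})$. After dividing by the common power of $\omega$, I can pass to the rescaled time $s=t/\omega^{1-\lambda}$, in which $w/\omega$ satisfies standard $p$-parabolic energy bounds on a cylinder of scale $(r,br^p)$.

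\textbf{Iteration.} Next apply the parabolic Sobolev embedding (DiBenedetto's $V^{2,p}$ embedding) to $w\zeta_j$, together with $(k_{j+1}-k_j)|A_{j+1}|\leqslant\iint w\zeta_j$ and $k_{j+1}-k_j=2^{-j-4}\omega$. This gives
\[
Y_{j+1}\leqslant \gamma\, b^{-\alpha}\,C^{j}\,Y_j^{1+\frac{p}{N+2}},\qquad Y_j=|A_j|/|Q_j|,
\]
with $\alpha$ tracking the powers of $b$. The fast geometric convergence lemma then yields $Y_j\to0$ provided $Y_0\leqslant \nu_+$, which is exactly hypothesis \eqref{eq2.12}. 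The conclusion is $u\leqslant\mu^+-\frac18\omega$ on $Q_{r/2,\frac12 b\theta}$, i.e.\ \eqref{eq2.13}.

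\textbf{Main obstacle.} The delicate point is the bookkeeping of $b$, so as to obtain exactly $\nu_+=\gamma^{-1}b^{N/p}$ rather than some other power. For this I would not use the plain $V^{2,p}$ embedding on the cylinder of height $b\theta$. Instead I would rescale $t$ by $b\theta$, i.e.\ use $Q_{r,b\theta}$ mapped to a unit cylinder, so that $b$ enters only through the time-derivative cutoff term, with factor $b^{-1}$ next to the sup-in-time term. Then I would choose the embedding exponent in the form $\iint w^{p}\leqslant (\sup\int w^2)^{p/N}\iint|Dw|^p\,|A|^{p/(N+p)\cdots}$ (Gagliardo–Nirenberg), arranged so that the resulting constant scales like $b^{-N/p}$ raised to the reciprocal of the iteration exponent. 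Checking that this power comes out as $N/p$ is routine but fiddly, and it is the step I would verify most carefully. It mirrors \cite[Lemma~2.5]{CiaHenSavSkr2025-1}.
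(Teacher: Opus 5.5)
Your proposal is correct and is essentially the standard De Giorgi iteration that the paper relies on by citing \cite[Lemma~2.5]{CiaHenSavSkr2025-1}. The key point is that \eqref{eq2.14} gives $\frac14\omega\leqslant u\leqslant 2\omega$ on the level sets $\{u>k_j\}$, so after rescaling time by $\omega^{1-\lambda}$ Lemma~\ref{lem2.2} becomes a $p$-Laplacian-type energy estimate that can be iterated.

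Two pieces of bookkeeping need fixing, and neither is a gap. First, the gradient terms carry $\omega^{\lambda+m^--p}$ and $\omega^{\lambda+m^-}r^{-p}$, not the exponents with the extra $-1$ that you wrote. With the corrected exponents both right-hand terms are $\omega^{\lambda+m^-}r^{-p}$ up to the factor $b^{-1}$, which matches your time term $\omega^{m^-+1}/(b\,\theta(\omega,r))$. Second, you do not need a detour to get the power of $b$. The plain $V^{2,p}$ embedding on the cylinder of height $b\,\theta(\omega,r)$, combined with your $L^1$ lower bound, already gives $Y_{j+1}\leqslant\gamma\, b^{-\frac{N}{p(N+2)}}C^{j}Y_j^{1+\frac{1}{N+2}}$ (the exponent is $1+\frac{1}{N+2}$, not $1+\frac{p}{N+2}$). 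Fast geometric convergence then yields exactly $\nu_+=\gamma^{-1}b^{N/p}$.
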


%\subsection{De Giorgi Type Lemmas Involving the Initial Data}

The following result is a De Giorgi type lemma involving the initial data and is used in the proof of Proposition~\ref{pr1.1} in the doubly degenerate case \eqref{eq1.6}. A related result for equation~\eqref{dnleq3} can be found in \cite[Lemma~3.5]{BDLS23}.

\begin{lemma}\label{lem2.7}
Let $u$ be a nonnegative, local weak subsolution to \eqref{eq1.1}-\eqref{eq1.2} in $\Omega_T$, and assume that $\lambda>1$ and $p>2$. Fix $\xi_0\in (0, 1)$ and $b\geqslant 1$, and suppose in addition that
\begin{equation}\label{eq2.17}
u\big(x, \tau-b\,\theta(\omega,r)\big)\leqslant \mu^+- \xi_0\,\omega,\quad x\in B_r(y).
\end{equation}
Then there exists a constant $\xi\in(0,\xi_0)$, depending only on the data and on $b$, such that 
\begin{equation}\label{eq2.18}
u(x, t)\leqslant \mu^+- \frac{1}{4}\xi \omega,\quad (x, t)\in Q_{\frac{1}{2}r,b\,\theta(\omega,r)}(y, \tau),
\end{equation}
provided that
\begin{equation}\label{eq2.20}
\frac{1}{2}\,\omega \leqslant \mu^+\leqslant 2\,\omega.
\end{equation}
\end{lemma}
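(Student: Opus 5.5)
We will follow the standard De Giorgi argument with initial data, as in \cite[Lemma~3.5]{BDLS23}, applied to the truncations $(u^{m^-}-k^{m^-})_+$ near the supremum. Set $w:=u^{m^-}$. Since $\lambda>1$ and $p>2$ give $m>1/(p-1)>0$, we can have either $m\geqslant 1$ or $m<1$, and we keep $m^-$ general. By \eqref{eq2.20}, for every level $k\in[\mu^+-\xi_0\omega,\mu^+]$ we have $k\simeq\omega$. By Lemma~\ref{lem2.1}, on the set where $u\geqslant k$ this gives
\[
g_+(w,k^{m^-})\simeq \omega^{1-m^-}\,(w-k^{m^-})_+^2 ,
\qquad
u^{(m-m^-)(p-1)}\simeq \omega^{(m-m^-)(p-1)} .
\]
We also use the comparability $(w-k^{m^-})_+\simeq \omega^{m^--1}(u-k)_+$. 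Hence Lemma~\ref{lem2.2} becomes a $p$-Laplacian-type energy estimate for $v:=(u-k)_+$, in which the time term carries the weight $\omega^{1-\lambda}$ up to constants depending only on the data.

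We apply Lemma~\ref{lem2.2} with a cutoff $\zeta=\zeta(x)$ that does not depend on time, on the cylinders $B_{r_n}\times(\tau-b\theta(\omega,r),\tau)$ with $r_n=\tfrac r2+\tfrac r{2^{n+1}}$. The levels are
\[
k_n=\mu^+-\tfrac14\xi\omega-\tfrac{\xi\omega}{2^{n+2}} .
\]
Hypothesis \eqref{eq2.17} together with $\xi<\xi_0$ makes the initial term vanish, because $k_n\geqslant \mu^+-\tfrac12\xi\omega\geqslant \mu^+-\xi_0\omega$. The term with $\zeta_t$ is also absent. This yields
\[
\omega^{1-\lambda}\sup_t\int_{B_{n+1}} v_n^2\,dx+\iint |Dv_n|^p
\leqslant \gamma \frac{2^{np}}{r^p}\iint v_n^p
\leqslant \gamma\frac{2^{np}}{r^p}(\xi\omega)^p\,|A_n| ,
\]
where $A_n=\{u>k_n\}\cap Q_n$. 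Next we pass to the rescaled time $s=(t-\tau)/(\omega^{1-\lambda}r^p)$ on an interval of length $b$. Using the parabolic embedding with exponent $p\frac{N+2}{N}$, and since $p>2$, we mix the $L^2$ sup bound with the $L^p$ gradient bound. This gives
\[
Y_{n+1}\leqslant \gamma\, b^{\alpha}\,\xi^{-\beta}\, C^n\, Y_n^{1+\delta},
\qquad Y_n=\frac{|A_n|}{|Q_n|},
\]
for some positive exponents $\alpha,\beta,\delta$ depending on $N,p$. The negative power of $\xi$ comes from the mismatch between the $L^2$ energy at scale $\omega^{1-\lambda}$ and the natural intrinsic scale $(\xi\omega)^{1-\lambda}$. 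The standard fast geometric convergence lemma then forces $Y_n\to0$, and hence \eqref{eq2.18}, provided $Y_0$ is small compared with a power of $\xi$.

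The main obstacle is precisely this smallness of $Y_0$, because no measure information is assumed. The plan is to first shrink the measure of $\{u>\mu^+-\xi\omega\}$ by a logarithmic-type or iterated energy argument started from the initial datum. Concretely, we take the energy estimate at the levels $\mu^+-\xi_0\omega/2^{j}$, $j=1,\dots,j_*$, with zero initial term at each step. Combining it with the De Giorgi isoperimetric inequality on each time slice, as in the classical "shrinking lemma" of \cite{DiB1993}, gives
\[
\frac{|\{u>\mu^+-\xi_0\omega/2^{j_*}\}\cap Q|}{|Q|}\leqslant \gamma(b)\, j_*^{-\frac{p-1}{p}} .
\]
Alternatively, we can use the $L^2$ sup bound directly: it gives $|\{u>\mu^+-\xi_0\omega 2^{-j}\}(t)|\leqslant \gamma b\,2^{j(2-p)}\xi_0^{p-2}\omega^{p-2}\omega^{\lambda-1}\ldots$. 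Here $p>2$ yields geometric decay of the distribution function in $j$, and this is where $p>2$ is used decisively. We then choose $j_*$, and hence $\xi=\xi_0 2^{-j_*}$, depending on $b$ and the data, so that $Y_0$ meets the threshold required by the iteration. This gives the claim, with $\xi$ depending only on the data and $b$.
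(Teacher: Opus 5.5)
Your framework is the right one, and it matches the standard argument the paper defers to (\cite[Lemma~3.5]{BDLS23}). You use time-independent cutoffs on $B_{r_n}\times(\tau-b\,\theta(\omega,r),\tau)$ and levels $k_n$ between $\mu^+-\frac12\xi\omega$ and $\mu^+-\frac14\xi\omega$. The initial term is killed by \eqref{eq2.17}, and $u\simeq\omega$ on $\{u>k_n\}$ by \eqref{eq2.20}.

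The gap is in the $\xi$-bookkeeping, and it sends you looking for smallness in the wrong place. After dividing Lemma~\ref{lem2.2} by the gradient weight $\omega^{(m-m^-)(p-1)+(m^--1)p}$, the $L^2$ term carries $\omega^{(1-m)(p-1)}=\omega^{p-1-\lambda}$, not $\omega^{1-\lambda}$. Set $w:=v_n/(\xi\omega)\in[0,1]$ and $t=\tau-b\,\theta(\omega,r)+\omega^{1-\lambda}s$ with $s\in(0,b\,r^p)$, and measure $|A_n|$ in $(x,s)$. The estimate then reads
\[
\xi^{2-p}\sup_{s}\int_{B_{r_n}} w^2\zeta^p\,dx+\iint |Dw|^p\zeta^p\,dx\,ds\leqslant \gamma\,\frac{2^{np}}{r^p}\,|A_n|,
\]
and the parabolic embedding gives
\[
Y_{n+1}\leqslant \gamma\, C^n \big(b\,\xi^{p-2}\big)^{\frac{p}{N+2}}\, Y_n^{1+\frac{p}{N+2}} .
\]
So the power of $\xi$ is \emph{positive}. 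The intrinsic time scale of the level $\xi\omega$ near $\mu^+$ is $\xi^{2-p}\theta(\omega,r)$, not $(\xi\omega)^{1-\lambda}r^p$, and for $p>2$ the given cylinder is short relative to it. Since $Y_0\leqslant 1$ trivially, take $\xi=\min\{\xi_0/2,(\nu_0/b)^{1/(p-2)}\}$ with $\nu_0$ depending only on the data; this closes the iteration. This is exactly where $p>2$ enters, and no measure information is needed. A negative power of $\xi$ is what a $|\zeta_t|$ term would produce, as in Lemma~\ref{lem2.6}, but you have correctly removed that term.

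Consequently, your second half is unnecessary and, as written, does not close.

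\emph{Shrinking-lemma route.} It needs, on every time slice, a lower bound for $|\{u(\cdot,t)\leqslant\mu^+-\xi_0\omega\}\cap B_r(y)|$ in order to apply De Giorgi's isoperimetric inequality. Hypothesis \eqref{eq2.17} gives this only at the initial time, and you never propagate it.

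\emph{Chebyshev route.} This one is sound in itself. With the correct weight it gives $|\{u(\cdot,t)>\mu^+-\xi_0\omega 2^{-j-1}\}\cap B_{r/2}(y)|\leqslant\gamma\, b\,(\xi_0 2^{-j})^{p-2}|B_r|$, i.e.\ $Y_0\lesssim b\,\xi^{p-2}$ on a slightly smaller cylinder. But your recursion demands $Y_0\lesssim b^{-\alpha/\delta}\xi^{\beta/\delta}$, and both sides shrink as $\xi\to0$. Choosing $j_*$ therefore works only if $p-2>\beta/\delta$, which you never check.

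Replace the whole second half by the correct recursion above.
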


% We do not use this lemma anywhere!
%\begin{lemma}\label{lem2.9}
%Let $u$ be a non-negative local weak sub-solution to \eqref{eq1.1}, \eqref{eq1.2} in $\Omega_T$, $\lambda>1$ and $p<2$. Assume that with some $\xi\in (0, 1)$ there holds
%\begin{equation}\label{eq2.29}
%u(x, \tau)\leqslant \mu^+- \xi\,\omega,\quad x\in B_r(y),
%\end{equation}
%then then there exists number $\nu_0 \in (0, 1)$ depending only on the data  and $\xi$ such that 
%\begin{equation}\label{eq2.30}
%u(x, t)\leqslant \mu^+ - \frac{1}{4}\xi \omega,\quad (x, t)\in B_{\frac{1}{4}r}(y)\times(\tau, \tau+\eta),\quad \eta:=\nu_0\,r^{p}\,\omega^{1-\lambda},
%\end{equation}
%provided that
%\begin{equation}\label{eq2.31}
%\frac{1}{2}\omega \leqslant \mu^+ \leqslant 2 \omega.
%\end{equation}
%\end{lemma}

\subsection{Expansion of Positivity}

In this section, we present several results on expansion of positivity, which form one of the main ingredients in the proof of Proposition~\ref{pr1.1}.

We begin with the case $\lambda<1$. A corresponding result for equation \eqref{dnleq3} can be found in \cite[Proposition~4.3]{Hen2022}, while the anisotropic version of \eqref{eq1.1}-\eqref{eq1.2} is treated in \cite[Theorem~3.4]{CiaHenSavSkr2025-1}.

\begin{lemma}\label{lem3.3}
Let $u$ be a nonnegative, local weak supersolution to \eqref{eq1.1}-\eqref{eq1.2}, and assume that $\lambda<1$. Suppose that for some $0<k\leqslant M$, $r>0$, and $\alpha\in(0,1)$, the following estimate holds
\begin{equation}\label{eq3.5}
|B_r(y)\cap\{u(\cdot, s) \geqslant k\}| \geqslant \alpha\, |B_r(y)|,
\end{equation}
Then there exist constants $\sigma, \varepsilon, \delta \in (0,1)$
depending only on data and on $\alpha$, such that
\begin{equation}\label{eq3.6}
u(x, t)\geqslant\sigma k,\quad x\in B_{2r}(y),
\end{equation}
for all $t$ in the interval
\begin{equation}\label{eq3.7}
s+\frac{1}{2}(1-\varepsilon)\delta\,\theta(k, r)\leqslant t\leqslant s+\frac{1}{2}\delta\,\theta(k, r).
\end{equation}
%\textcolor{magenta}{$0<b_1<b_2$  (by the proof of exp of positivity $b_1=\beta b_2, \beta<1$, $b_2=\frac{1}{2}\frac{\delta}{\sigma_0^{1-\lambda}}, \sigma_0=\frac{1}{C}$, which means that we can not guarantee that $b_2<1$, but we can guarantee that $b_1\sigma_0^{1-\lambda}<\frac{1}{2}$, which is essental for us for the proof of Proposition \ref{pr1.1}), $C>1$} 
\end{lemma}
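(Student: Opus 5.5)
The plan is to follow the standard three-step scheme for expansion of positivity in the singular range, adapted to the structure \eqref{eq1.2} through the variable $u^{m^-}$. The steps are: first propagate the measure information \eqref{eq3.5} forward in time, then turn it into a pointwise lower bound by a change of variables that removes the singular character, and finally convert back to the original time scale.

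\textbf{Step 1: propagation of measure.} We start from \eqref{eq3.5} at time $s$. We apply the energy estimate \eqref{eq2.1} of Lemma~\ref{lem2.2} for $(u^{m^-}-k^{m^-})_-$ on $B_{4r}(y)\times(s,s+\delta\theta(k,r))$, using a cutoff that is independent of $t$, so the $\zeta_t$ term drops out. By Lemma~\ref{lem2.1}, the function $g_-$ is comparable to $k^{1-m^-}(k^{m^-}-u^{m^-})_+^{2}$ on the set $\{u<\epsilon k\}$. The gradient term is of order $k^{\lambda+1}r^{-p}\cdot \delta\theta(k,r)\,|B_r| = \delta k^{2}|B_r|$. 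Because the scaling $\theta(k,r)=k^{1-\lambda}r^p$ is intrinsic, this yields
\[
|B_{r}(y)\cap\{u(\cdot,t)\le \epsilon k\}|\le \Big(1-\tfrac{\alpha}{2}\Big)|B_r|, \qquad s\le t\le s+\delta\theta(k,r),
\]
provided $\epsilon$ and $\delta$ are small depending on $\alpha$. (In the original ball the measure is first propagated on $B_r$ and then enlarged to $B_{4r}$ at the cost of the factor $4^{-N}$.)

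\textbf{Step 2: shrinking lemma and De Giorgi.} Next we pass to the rescaled variables
\[
w(x,\tau)=\frac{e^{\tau}}{k}\,(\text{time-stretched } u),
\]
with $t=s+e^{(\lambda-1)\tau}\ldots$. This is the standard exponential change of time from the singular setting, valid since $\lambda<1$. Under it, $w$ becomes a supersolution of an equation of the same type, and the measure information of Step~1 holds for every $\tau$ in a long interval. We then use the logarithmic or energy estimates of Lemma~\ref{lem2.2} for the levels $\varepsilon_j=2^{-j}$ to obtain a shrinking lemma: the fraction of the cylinder where $w<2^{-j}$ tends to zero as $j$ grows. Once $j$ is large enough that this fraction falls below $\nu_-$, the De Giorgi Lemma~\ref{lem2.5} with $\xi\omega$ replaced by the level gives a pointwise lower bound on a smaller cylinder.

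\textbf{Step 3: return to the original variables.} Undoing the change of variables gives $u\ge\sigma k$ on $B_{2r}(y)$ for $t$ in a window of the form \eqref{eq3.7}. Here $\sigma=2^{-j_*}e^{-\tau_*}$, and the constants $\delta$ and $\varepsilon$ are fixed by the endpoints of the $\tau$-interval.

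The main obstacle is Step~2. We must check that the exponential-in-time rescaling preserves the doubly nonlinear structure \eqref{eq1.2} in terms of $u^{m^-}$ with weights $u^{(m-m^-)(p-1)}$. We must also check that the shrinking-lemma constants stay independent of $k$, which relies on the homogeneity $\theta(k,r)$. If this turns out to be too delicate, the fallback is to cite \cite[Proposition~4.3]{Hen2022} and \cite[Theorem~3.4]{CiaHenSavSkr2025-1}, where this argument is carried out after the change of variables relating \eqref{eq1.0} to \eqref{dnleq3}.
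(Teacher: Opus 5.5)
The paper gives no proof of this lemma. It only refers to \cite[Proposition~4.3]{Hen2022} and \cite[Theorem~3.4]{CiaHenSavSkr2025-1}, so your fallback is exactly what the paper does. Your self-contained sketch, however, has a genuine gap in Step~2, which is precisely where $\lambda<1$ matters. As written (fixed levels $2^{-j}$, a shrinking lemma, then Lemma~\ref{lem2.5} with a fixed $\nu_-$), it is the degenerate-case blueprint, and in the singular range the shrinking lemma and Lemma~\ref{lem2.5} need incompatible cylinders.

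Since $\theta(k,r)=k^{1-\lambda}r^p$ is increasing in $k$ for $\lambda<1$, run the shrinking lemma from a top level $k_0$ down to $k_02^{-j_*}$ on a cylinder of length $T$. At the level $k_j=k_02^{-j}$, the $|\zeta_t|$-term in \eqref{eq2.1} is $\theta(k_j,r)/T$ times the gradient term. Unless $T\gtrsim\theta(k_0,r)$, summing over $j$ no longer produces a small fraction. Lemma~\ref{lem2.5} at the level $k_02^{-j_*}$, on the other hand, needs $b\leqslant 1$, i.e.\ $T\leqslant\theta(k_02^{-j_*},r)=2^{-j_*(1-\lambda)}\theta(k_0,r)$. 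For large $j_*$ both cannot hold. If you keep the long cylinder, the De Giorgi threshold degenerates like $b^{-1}\approx 2^{-j_*(1-\lambda)}$, and the polynomial rate $j_*^{-(p-1)/p}$ of the shrinking lemma cannot beat that.

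The exponential change of time does not remove this conflict. It removes the analogous problem for $\lambda>1$, because there the smallest level has the longest intrinsic time. The correct singular change is $t=s+\delta\theta(k,r)(1-e^{(\lambda-1)\tau})$, $w=e^{\tau}u/k$, so that $\tau\to\infty$ corresponds to the end of the interval. With it you get $w_\tau-\delta(1-\lambda)\operatorname{div}\bar{\mathbf A}\geqslant w\geqslant 0$, and $|\{w(\cdot,\tau)>\epsilon e^{\tau}\}\cap B_1|\geqslant\frac{\alpha}{2}|B_1|$ for all $\tau>0$, with $B_1$ the rescaled ball. The level now grows with $\tau$ instead of being fixed at $1$. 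The same conflict between the two cylinder lengths reappears for every cylinder in the $\tau$-variable. An additional idea is needed here; the works you cite must handle exactly this point, and your sketch leaves it open.

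By contrast, the two things you single out as the main obstacle are routine. By homogeneity, $\bar{\mathbf A}:=r^{p-1}(ke^{-\tau})^{-\lambda}\mathbf A$ satisfies \eqref{eq1.2} with the same $K_1,K_2$, and independence of $k$ is automatic. There are also two smaller slips in Step~1:
\begin{itemize}
\item The two-sided bound of Lemma~\ref{lem2.1}, with a constant $\gamma>1$, cannot turn $1-\alpha$ into $1-\frac{\alpha}{2}$. You need the exact function $g_-$, its monotonicity in $u$, and the fact that $g_-((\epsilon k)^{m^-},k^{m^-})/g_-(0,k^{m^-})\to 1$ as $\epsilon\to 0$.
\item With a cutoff satisfying $|D\zeta|\leqslant(\sigma r)^{-1}$, the gradient term is $\gamma\delta\sigma^{-p}k^{1+m^-}|B_r|$, to be compared with $g_-(0,k^{m^-})\sim k^{1+m^-}$. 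Your $\delta k^2|B_r|$ covers only the case $m\geqslant 1$.
\end{itemize}
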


We next consider the case $1<p<2$. We refer the reader to \cite[Lemma~3.4]{NaS22} for the corresponding result in the setting of equation~\eqref{dnleq3} for signed solutions.

\begin{lemma}\label{new_exp_lem}
Let $u$ be a nonnegative, local weak subsolution (supersolution) to \eqref{eq1.1}-\eqref{eq1.2} in $\Omega_T$, and let $m>0$, $1<p<2$. Suppose that for some $0<c<C$ and $\alpha\in(0,1)$, there holds
$$c\omega\leqslant \mu^+ \leqslant C\omega$$
and that for some $0<a < \frac12c$, 
\begin{equation*}
\Big|B_{r}(y)\cap \big\{\mu^{+}-u(\cdot, s)\geqslant a\omega\big\}\Big|\geqslant \alpha |B_r(y)|.
\end{equation*}
Then there exist constants $\sigma,\delta\in(0,1)$ depending only on the data and on $C$,  $c$, and $\alpha$, such that 
\begin{equation*}
\mu^{+}-u\geqslant \sigma\, a\, \omega
\quad x\in B_{2r}(y)\times\{s+\delta\,a^{2-p}\theta(\omega,r)\},
\end{equation*}
provided that $B_{2r}(y)\times\left(s,s+\delta\,a^{2-p}\theta(\omega,r)\right)\in\Omega_T$.
\end{lemma}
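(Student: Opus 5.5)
The plan is to reduce the statement to the classical expansion of positivity for the singular $p$-Laplacian, $1<p<2$, in the form of DiBenedetto--Gianazza--Vespri. This works because near the supremum the doubly nonlinear structure \eqref{eq1.2} is nondegenerate in $u$. Set $v:=\mu^+-u\geqslant 0$, where $\mu^+$ bounds $u$ from above on $B_{2r}(y)\times(s,s+\delta a^{2-p}\theta(\omega,r))$; this bound is implicit in the statement. The only truncations we will ever use are $(u-k)_+$ with levels $k\in[\mu^+-a\omega,\mu^+]$. On the support of such a truncation, $c\omega\leqslant\mu^+\leqslant C\omega$ and $a<\tfrac12 c$ give
\[
\tfrac12 c\,\omega\leqslant u\leqslant C\omega .
\]
Consequently $u^{(m-m^-)(p-1)}\simeq\omega^{(m-m^-)(p-1)}$ and $|Du^{m^-}|\simeq\omega^{m^--1}|Du|$, with constants depending only on the data and on $c,C$.

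The first step is to rewrite the energy estimate \eqref{eq2.1} for $(u^{m^-}-k^{m^-})_+$ in terms of $(u-k)_+$. By Lemma \ref{lem2.1}, $g_+(u^{m^-},k^{m^-})\simeq\omega^{1-m^-}(u-k)_+^2$ on this range, and $(u^{m^-}-k^{m^-})_+\simeq\omega^{m^--1}(u-k)_+$. Collecting powers of $\omega$, the estimates take the form
\[
\sup_t\int (u-k)_+^2\zeta^p
+\omega^{\lambda-1}\iint |D(u-k)_+|^p\zeta^p
\leqslant \int_{t=\mathrm{init}}(u-k)_+^2\zeta^p+\gamma\iint (u-k)_+^2|\zeta_t|
+\gamma\,\omega^{\lambda-1}\iint (u-k)_+^p|D\zeta|^p .
\]
Next rescale by
\[
w(x,\tau):=\frac{v(x,s+\omega^{1-\lambda}\tau)}{\omega}.
\]
Then, for all levels $h\in(0,a]$, the function $w$ lies in the parabolic De Giorgi class of the singular $p$-Laplacian for truncations $(w-h)_-$, with constants depending on the data, $c$ and $C$. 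The same holds for the logarithmic estimates, which are obtained by testing \eqref{eq1.3} with the usual logarithmic function of $(u-k)_+$ and using the same comparability. In the new time variable, the hypothesis reads $|B_r(y)\cap\{w(\cdot,0)\geqslant a\}|\geqslant\alpha|B_r(y)|$.

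The second step is to run the DiBenedetto--Gianazza--Vespri proof of expansion of positivity for $1<p<2$ inside this De Giorgi class. Its ingredients are:
\begin{itemize}
\item propagation of the measure information forward in time, over an interval of length $\sim a^{2-p}r^p$, via the energy estimate;
\item a shrinking lemma, via the De Giorgi isoperimetric inequality applied to $(w-a/2^j)_-$;
\item a De Giorgi lemma of the type of Lemma \ref{lem2.5} in the rescaled variables;
\item the singular-case device of the exponential time change, or equivalently the argument of \cite[Lemma~3.4]{NaS22}, which is carried out entirely at the level of energy and logarithmic inequalities.
\end{itemize}
Together these yield $w\geqslant\sigma a$ on $B_{2r}(y)\times\{\delta a^{2-p}r^p\}$. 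Undoing the rescaling, this becomes $\mu^+-u\geqslant\sigma a\omega$ at time $s+\delta a^{2-p}\omega^{1-\lambda}r^p=s+\delta a^{2-p}\theta(\omega,r)$, which is the claim.

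The main obstacle I expect is to check that every step of the singular-case argument uses only truncations at levels within $[\mu^+-a\omega,\mu^+]$, that is, levels $h\leqslant a$ for $w$. This matters because the comparability $u\simeq\omega$, and hence the equivalence of the doubly nonlinear and $p$-Laplacian energy estimates, fails below those levels. The delicate spot is the exponential change of variables, which in the $p$-Laplacian literature is usually applied to the equation itself rather than to the energy inequalities. I would first try to follow \cite{NaS22}, where the argument is phrased through energy estimates for signed solutions. At each step I would verify that the constructed levels are of the form $\mu^+-\epsilon a\omega$ with $\epsilon\leqslant1$, so that $u\geqslant\tfrac12 c\,\omega$ holds wherever the truncation is nonzero.
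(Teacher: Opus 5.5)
Your plan is essentially the paper's: the paper gives no argument of its own and simply imports \cite[Lemma~3.4]{NaS22} from the setting of \eqref{dnleq3} via the change of variables relating the formulations. That lemma rests on the mechanism you propose to exploit: since $u\simeq\omega$ on $\{u>\mu^+-a\omega\}$, the equation behaves there like a singular $p$-Laplacian, and expansion of positivity is run at the level of energy estimates with all truncation levels kept near $\mu^+$. There is one harmless slip in your normalized energy inequality. Lemma~\ref{lem2.1} gives $g_+(u^{m^-},k^{m^-})\simeq\omega^{m^{-}-1}(u-k)_+^2$, not $\omega^{1-m^-}(u-k)_+^2$, and after dividing by $\omega^{m^{-}-1}$ the gradient terms carry $\omega^{(m-1)(p-1)}=\omega^{\lambda-p+1}$, not $\omega^{\lambda-1}$. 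Only with this exponent does every term scale like $\omega^{2}$ under your rescaling $t=s+\omega^{1-\lambda}\tau$, $w=(\mu^+-u)/\omega$, which is itself correct.
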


Finally, we treat the case $\lambda>1$. A corresponding result for equation \eqref{dnleq3} is proved in \cite[Proposition~4.3]{Hen2022}, while the anisotropic version of \eqref{eq1.1}, \eqref{eq1.2} can be found in \cite[Theorem~3.1]{CiaHenSavSkr2025-1}.

\begin{lemma}\label{deg_exp_lem}
Let $u$ be a nonnegative, local weak supersolution of \eqref{eq1.1}-\eqref{eq1.2} in $\Omega_T$, and assume that $\lambda>1$. Suppose that for some constants $k, r>0$ and $\alpha \in (0, 1)$, the following estimate holds
\begin{equation*}
|B_r(y)\cap\{u(\cdot, s)\geqslant k\}|\geqslant \alpha |B_r(y)|,
\end{equation*}
Then there exist constants  $\delta>1$ and $\sigma\in(0,1)$ depending only on the data and on $\alpha$, such that
\begin{equation*}
u(x, t)\geqslant \sigma\,k,\quad x\in B_{2r}(y),
\end{equation*}
for all $t$ in the interval
\begin{equation*}
s+ \frac{1}{2} \delta\,\theta(k,r) \leqslant t\leqslant s+ \delta \,\theta(k,r).
\end{equation*}
\end{lemma}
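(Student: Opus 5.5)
The statement to prove is Lemma \ref{deg_exp_lem}: expansion of positivity for $\lambda>1$. My plan is to follow DiBenedetto's scheme with the exponential change of time variable. The argument has three steps: a time-propagation step, a transformation that turns the degenerate problem into a problem at a uniform level, and a concluding De Giorgi step.

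\textbf{Step 1: propagating the measure information in time.} I use the energy estimate \eqref{eq2.1} for $(u^{m^-}-k^{m^-})_-$ with a cutoff $\zeta$ depending only on $x$. It is supported in $B_r(y)$ and equals $1$ on $B_{(1-\epsilon)r}(y)$. Lemma \ref{lem2.1} controls $g_-$ from above and below.

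\begin{itemize}
\item At time $s$ the initial term is bounded by $\gamma k^{1+m^-}(1-\alpha)|B_r|$.
\item The gradient term is of order $k^{\lambda+m^--m}\ldots$. After accounting for the weight $u^{(m-m^-)(p-1)}\leqslant k^{(m-m^-)(p-1)}$ on the set $\{u<k\}$, it contributes $\gamma\, t\, k^{\lambda+1}\epsilon^{-p}r^{-p}|B_r|$ (for the subsolution side only the weight bound matters).
\item From below, the left side dominates $g_-$ on $\{u<\varepsilon k\}$, which is of order $k^{1+m^-}$.
\end{itemize}

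It follows that $|B_r\cap\{u(\cdot,t)>\varepsilon k\}|\geqslant \frac{\alpha}{2}|B_r|$ for all $t\in(s,s+\delta_0\theta(k,r))$, with $\delta_0,\varepsilon$ depending on the data and $\alpha$.

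\textbf{Step 2: exponential change of variables.} I set
\[
w(x,\tau)=\frac{e^{\tau/(\lambda-1)}}{k}\,\delta_0^{1/(\lambda-1)}\, u\Big(x,\, s+e^{-\tau}\,k^{1-\lambda}r^p\Big),
\]
or the analogous forward-in-time version. Since $\lambda>1$, $w$ is a supersolution of an equation of the same structure \eqref{eq1.2} plus a nonnegative lower-order term $\frac{1}{\lambda-1}w$, which has the right sign for supersolutions. Step 1 then gives, for all $\tau$ in a long interval, that $w(\cdot,\tau)\geqslant k_0:=\varepsilon e^{\tau_0/(\lambda-1)}$ on a set of measure at least $\frac{\alpha}{2}|B_{r}|$ in $B_{4r}$. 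Here $k_0$ is large when $\tau_0$ is large.

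In the cylinder $B_{4r}\times(\tau_0,\tau_0+k_0^{1-\lambda}(8r)^p)$ I apply the logarithmic-free measure-shrinking lemma. This is the usual De Giorgi shrinking argument: the energy estimate for $(w^{m^-}-(k_0/2^j)^{m^-})_-$ on the cylinder $B_{8r}$ is combined with the discrete isoperimetric inequality of De Giorgi on each slice. Because the equation is degenerate ($\lambda>1$), the time length must be intrinsic to $k_0/2^j$; the exponential change of time makes a single cylinder work for all levels $j\leqslant j_*$. The result is that the measure of $\{w<k_0/2^{j_*}\}$ in the cylinder is at most $\nu_-$ of its size.

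\textbf{Step 3: De Giorgi lemma and return to $u$.} Lemma \ref{lem2.5}, with $\xi\omega$ replaced by $k_0 2^{-j_*}$ and with $b$ fixed, then gives $w\geqslant \frac12 k_02^{-j_*}$ in the half cylinder. Transforming back to $u$, the bound $u\geqslant\sigma k$ holds on $B_{2r}(y)$ for $t\in[s+\frac12\delta\theta(k,r),\,s+\delta\theta(k,r)]$. The constants $\sigma$ and $\delta$ are fixed by $\tau_0$, $j_*$ and the data, and $\delta>1$ can be arranged by the freedom in choosing $\tau_0$.

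\textbf{Main obstacle.} I expect the hardest part to be the shrinking step in Step 2. The weight $u^{(m-m^-)(p-1)}$ in \eqref{eq1.2} makes the energy inequality for $w$ inhomogeneous in the level when $m>1$. I would handle this by working throughout with $w^{m^-}$ and bounding the weight on the set $\{w<k_0\}$ by $k_0^{(m-m^-)(p-1)}$, so that the resulting time scale is exactly $k_0^{1-\lambda}r^p$.
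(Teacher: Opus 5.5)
The paper does not prove this lemma; it imports it from the literature. Your overall architecture is the standard DiBenedetto--Gianazza--Vespri scheme: propagation of measure in time, an exponential change of the time variable, a shrinking argument, then Lemma~\ref{lem2.5}. However, Step~2 as written fails exactly at the mechanism that makes this scheme work for $\lambda>1$.

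\textbf{The change of variables is wrong.} Your map $t=s+e^{-\tau}k^{1-\lambda}r^p$ runs backward in time ($dt/d\tau<0$), so the supersolution inequality for $u$ becomes a backward parabolic inequality for $w$. The scaling also fails to cancel. Since \eqref{eq1.2} is homogeneous of degree $\lambda$ in $u$, the amplitude factor $e^{\tau/(\lambda-1)}$ requires $|dt/d\tau|\propto e^{\tau}$. Yours is $\propto e^{-\tau}$, which leaves a coefficient proportional to $-e^{-2\tau}$ in front of the divergence. The level $k_0=\varepsilon e^{\tau_0/(\lambda-1)}$, growing with $\tau_0$, is an artifact of this map. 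The correct change is
\[
w(x,\tau)=\frac{e^{\tau/(\lambda-1)}}{k}\,u\big(x,\,s+\delta_0e^{\tau}\theta(k,r)\big),\qquad \tau>0 .
\]
For this $w$ one has $w_\tau-\operatorname{div}\tilde{\mathbf A}(x,\tau,w,Dw)\geqslant \frac{w}{\lambda-1}\geqslant 0$, where $\tilde{\mathbf A}$ satisfies \eqref{eq1.2} with $K_i$ replaced by $\delta_0 r^pK_i$. The hypothesis at level $k$ implies it at every lower level. So Step~1 applied at $M=ke^{-\tau/(\lambda-1)}\leqslant k$ gives $|B_r(y)\cap\{w(\cdot,\tau)>\varepsilon\}|\geqslant\frac{\alpha}{2}|B_r(y)|$ for \emph{all} $\tau>0$. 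The level is fixed; what the transformation buys is that this information holds on an unbounded $\tau$-interval.

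\textbf{The missing idea is the length of the shrinking cylinder.} The unbounded interval is what you need, because the cylinder must be intrinsic to the \emph{lowest} level $k_{j_*}=\varepsilon 2^{-j_*}$, not the top one. Its $\tau$-length must be comparable to $k_{j_*}^{1-\lambda}$ in the normalized variables.
\begin{itemize}
\item With that length, for every $j\leqslant j_*$ the bound for the $\zeta_\tau$-term in \eqref{eq2.1} at level $k_j$ is $2^{-(j_*-j)(\lambda-1)}$ times the bound for the $|D\zeta|^p$-term. This is where the degeneracy $\lambda>1$ is used. The telescoping argument then gives $|Q\cap\{w<k_{j_*}\}|\leqslant \gamma j_*^{-(p-1)/p}|Q|$.
\item With your length $k_0^{1-\lambda}(8r)^p$, the ratio is $2^{j(\lambda-1)}$, and the sum yields no decay in $j_*$.
\item Put differently, Lemma~\ref{lem2.5} at level $k_{j_*}$ in your cylinder corresponds to $b=2^{-j_*(\lambda-1)}$. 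Then $\nu_-=\gamma^{-1}b^{N/p}$ decays exponentially in $j_*$ and can never be reached, so the choice of $j_*$ becomes circular.
\end{itemize}
The exponential change does not by itself make ``a single cylinder work'': the transformed equation has the same intrinsic scaling as the original.

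\textbf{Two smaller points.}
\begin{itemize}
\item For $m>1$, the weight in the shrinking step is handled differently from what you propose. On $\{k_{j+1}<w<k_j\}$, where the isoperimetric inequality is applied, use the \emph{lower} bound $w^{(m-1)(p-1)}\geqslant k_{j+1}^{(m-1)(p-1)}$. On the right-hand side of the energy estimate, use the upper bound $k_j^{(m-1)(p-1)}$, not $k_0^{(m-1)(p-1)}$, which would lose a factor $2^{j(m-1)(p-1)}$.
\item In Step~1, using Lemma~\ref{lem2.1} on both sides only gives $|B_{(1-\epsilon)r}\cap\{u(\cdot,t)<\varepsilon k\}|\leqslant\gamma^2(1-\alpha)|B_r|+\dots$. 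This is void when $\gamma^2(1-\alpha)\geqslant1$. You must keep the constant $1$ on the initial term in \eqref{eq2.1} and compare $g_-$ directly, via $g_-(u^{m^-},k^{m^-})\leqslant g_-(0,k^{m^-})$ and $g_-((\varepsilon k)^{m^-},k^{m^-})\geqslant(1-\gamma\varepsilon)\,g_-(0,k^{m^-})$. Also, the $|D\zeta|^p$-term is of order $k^{\lambda+m^-}$, not $k^{\lambda+1}$; this matters when $m<1$.
\end{itemize}
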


\subsection{Other integral estimates}

\noindent We introduce and prove the following two auxiliary lemmas, which will be used to establish the $L^{1}_{\mathrm{loc}}-L^{1}_{\mathrm{loc}}$ Harnack inequalities (another novelty related to this work) presented in Section~\ref{Main components}.

\begin{lemma}\label{lem2.3}
Let $u$ be a nonnegative, local weak supersolution of \eqref{eq1.1}-\eqref{eq1.2} in $\Omega_T$. Fix parameters $0<\alpha<1$ and $1<\beta<1+\frac{1}{m^{-}}$. Then there exists a constant $\gamma>0$, depending only on the data and on $\beta$, such that for every $\varepsilon>0$, every cylinder $B_r(y)\times(s,t)\subset\Omega_T$, and every piecewise smooth cutoff function $\zeta$ vanishing on $\partial B_r(y)$ and satisfying $0\leqslant\zeta\leqslant 1$, the following inequality holds:
\begin{equation}\label{eq2.2}
\begin{aligned}
&\int\limits_s^t\int\limits_{B_r(y)}\Big(\frac{\tau-s}{t-s}\Big)^\alpha
\frac{u^{(m-m^-)(p-1)}}{(u^{m^-}+\varepsilon^{m^-})^\beta}|D u^{m^-}|^{p}\,\zeta^{p}\,dx d\tau
\\&\leqslant \gamma \int\limits_{B_r(y)\times\{t\}}(u+\varepsilon)^{1-m^-(\beta-1)}\,dx+
\gamma \int\limits_s^t\int\limits_{B_r(y)}(u+\varepsilon)^{\lambda-m^-(\beta-1)}\,|D \zeta|^{p}\,dx d\tau.
\end{aligned}
\end{equation}
\end{lemma}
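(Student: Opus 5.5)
We would test the supersolution inequality \eqref{eq1.3} with the function
\[
\varphi(x,\tau)=\Big(\frac{\tau-s}{t-s}\Big)^{\alpha}\,(u^{m^-}+\varepsilon^{m^-})^{1-\beta}\,\zeta^{p}(x),
\]
which is nonnegative. Since $\beta>1$ and $\varepsilon>0$, it is bounded. As usual, its use is justified through the standard time-averaging (Steklov/Kinderlehrer--Lions) procedure, exactly as in the proof of Lemma \ref{lem2.2}.

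The gradient term is handled first. The spatial gradient of $\varphi$ contains
\[
(1-\beta)\,(u^{m^-}+\varepsilon^{m^-})^{-\beta}\,D u^{m^-}\,\zeta^p,
\]
which carries a negative sign because $\beta>1$. Using the coercivity in \eqref{eq1.2}, the weak supersolution inequality with this test function puts
\[
(\beta-1)K_1\iint \Big(\frac{\tau-s}{t-s}\Big)^\alpha \frac{u^{(m-m^-)(p-1)}}{(u^{m^-}+\varepsilon^{m^-})^{\beta}}|Du^{m^-}|^p\zeta^p
\]
on the side where we can bound it by the remaining terms. The other spatial term involves $p\zeta^{p-1}D\zeta$. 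We bound it with the growth condition in \eqref{eq1.2} and apply Young's inequality with weights
\[
\big[u^{(m-m^-)(p-1)}(u^{m^-}+\varepsilon^{m^-})^{-\beta}|Du^{m^-}|^p\zeta^p\big]^{(p-1)/p}
\]
and
\[
\big[u^{(m-m^-)(p-1)}(u^{m^-}+\varepsilon^{m^-})^{p-\beta}|D\zeta|^p\big]^{1/p}.
\]
Half of this is absorbed into the left-hand side. What remains is
\[
u^{(m-m^-)(p-1)}(u^{m^-}+\varepsilon^{m^-})^{p-\beta}|D\zeta|^p\le\gamma\,(u+\varepsilon)^{(m-m^-)(p-1)+m^-(p-\beta)}|D\zeta|^p .
\]
The exponent here equals $m(p-1)+m^- -m^-\beta=\lambda-m^-(\beta-1)$, as required. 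One checks that when the exponent of $u$ is negative one still has $u^{a}\le(u+\varepsilon)^a$ in the relevant direction; since $(m-m^-)\ge0$, this is immediate.

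Next comes the time term, which is where care is needed. Formally, $u_\tau(u^{m^-}+\varepsilon^{m^-})^{1-\beta}=\partial_\tau G(u)$ with
\[
G(u)=\int_0^u (v^{m^-}+\varepsilon^{m^-})^{1-\beta}\,dv\ge 0,
\]
and $G(u)\le\gamma (u+\varepsilon)^{1-m^-(\beta-1)}$, where $1-m^-(\beta-1)>0$ precisely because $\beta<1+1/m^-$. The time term then becomes
\[
\int_{B_r}G(u)\zeta^p\,dx\Big|_{\tau=t}-\iint \partial_\tau\Big(\frac{\tau-s}{t-s}\Big)^\alpha G(u)\zeta^p .
\]
The weight vanishes at $\tau=s$, so no initial datum appears; this is why the factor $(\frac{\tau-s}{t-s})^\alpha$ is there. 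The inequality direction gives
\[
\text{gradient integral}\le \gamma\int G(u(\cdot,t))\zeta^p+\iint\frac{\alpha(\tau-s)^{\alpha-1}}{(t-s)^\alpha}G(u)\zeta^p+\gamma\iint(\ldots)|D\zeta|^p .
\]
The sign must be checked carefully. The supersolution inequality reads $\int u\varphi|_{s}^{t}-\iint u\varphi_\tau+\iint \mathbf{A}D\varphi\ge0$. The term $\iint \mathbf A\,D\varphi$ contains minus the good gradient term, so that term moves to the other side and is bounded by $\int G(u)\zeta^p|_t$ minus $\iint G\,\partial_\tau w\,\zeta^p$, with $w:=(\frac{\tau-s}{t-s})^\alpha$. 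Since $\partial_\tau w\ge0$ and $G\ge0$, that second term is nonpositive and can be dropped. Hence only the value at time $t$ survives, giving the first term in \eqref{eq2.2}.

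The main obstacle is making this time manipulation rigorous: with mollified $u_h$ we need the convexity-type inequality $\partial_\tau u_h\,\psi(u)\ge$ something like $\partial_\tau G(u_h)$ up to vanishing errors, or alternatively working with $\psi(u_h)$ in the test function. We would follow the standard exponential-mollification approach and pass to the limit.
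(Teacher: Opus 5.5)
Your proposal is correct and follows the paper's proof. It uses the same test function $(u^{m^-}+\varepsilon^{m^-})^{1-\beta}\big(\frac{\tau-s}{t-s}\big)^{\alpha}\zeta^{p}$, the same Young splitting of the $D\zeta$ term, and the same elementary bounds on $G(u)$ and on $u^{(m-m^-)(p-1)}(u^{m^-}+\varepsilon^{m^-})^{p-\beta}$, via $(u+\varepsilon)^{m^-}\leqslant u^{m^-}+\varepsilon^{m^-}\leqslant 2^{1-m^-}(u+\varepsilon)^{m^-}$; this two-sided comparability, not $m-m^-\geqslant 0$, is what handles the possibly negative exponent $p-\beta$. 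As a small aside, the term $-\int_{B_r(y)} G(u(\cdot,s))\zeta^{p}\,dx$ already has a favorable sign, so the time weight is not actually what removes the initial datum.
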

\begin{proof}
Test \eqref{eq1.3} by $(u^{m^-}+\varepsilon^{m^-})^{1-\beta}\big(\frac{\tau-s}{t-s}\big)^\alpha \zeta^p$ with $\zeta\in C_0^\infty(B_r(y))$, $0\leqslant\zeta\leqslant1$ and integrate over $B_r(y)\times(s, t)$.
The use of such a test function is justified, modulus a standard
 averaging process, by making use of the alternate weak formulation of \cite{KL}. Using conditions \eqref{eq1.2} and  Young's inequality, we obtain
\begin{equation*}
\begin{aligned}
& K_1(\beta-1)\int\limits_s^t\int\limits_{B_r(y)}\Big(\frac{\tau-s}{t-s}\Big)^\alpha 
\frac{u^{(m-m^-)(p-1)}}{(u^{m^-}+\varepsilon^{m^-})^\beta}|D u^{m^-}|^{p}\,\zeta^{p}\,dx d\tau
\\&\leqslant \gamma \int\limits_{B_r(y)\times\{t\}}\int\limits_0^u\frac{ds}{(s^{m^-}+\varepsilon^{m^-})^{\beta-1}}\,\zeta^{p}\,dx
\\&\quad+\gamma\int\limits_s^t\int\limits_{B_{r}(y)}\Big(\frac{\tau-s}{t-s}\Big)^\alpha u^{(m-m^-)(p-1)}(u^{m^-}+\varepsilon^{m^-})^{p-\beta}|D\zeta|^{p}\,dx d\tau.
\end{aligned}
\end{equation*}
First, we estimate the term corresponding to the time derivative of the test function
\begin{equation*}
\begin{aligned}
\gamma \int\limits_{B_{r}(y)\times\{t\}}\int\limits_0^u\frac{ds}{(s^{m^-}+\varepsilon^{m^-})^{\beta-1}}\,\zeta^{p}\,dx
&\leqslant\gamma \int\limits_{B_{r}(y)\times\{t\}}\int\limits_0^u\frac{ds}{(s+\varepsilon)^{m^-(\beta-1)}}\,dx
\\&\leqslant \gamma(\beta)\int\limits_{B_r(y)}(u+\varepsilon)^{1-m^-(\beta-1)}\,dx.
\end{aligned}
\end{equation*}
The remaining terms, which involve the spatial derivatives of the cutoff function $\zeta$, can be estimated as follows
\begin{equation*}
\begin{aligned}
&\int\limits_s^t\int\limits_{B_r(y)}\Big(\frac{\tau-s}{t-s}\Big)^\alpha u^{(m-m^-)(p-1)}(u^{m^-}+\varepsilon^{m^-})^{p-\beta}|D\zeta|^{p}\,dx d\tau
\\&\leqslant
\int\limits_s^t\int\limits_{B_r(y)}(u^{m^-}+\varepsilon^{m^-})^{\frac{m}{m^-}(p-1)+1-\beta}|D \zeta|^{p}\,dx d\tau
\leqslant\int\limits_s^t\int\limits_{B_r(y)}(u+\varepsilon)^{\lambda-m^-(\beta-1)}|D\zeta|^{p}\,dx d\tau.
\end{aligned}
\end{equation*}
Combining the above estimates, we arrive at the required inequality \eqref{eq2.2}.
\end{proof}

\begin{lemma}\label{lem2.4}
Let $u$ be a nonnegative, local weak subsolution to \eqref{eq1.1}-\eqref{eq1.2} in $\Omega_T$. Fix parameters $0<\alpha<1$ and $1<\beta<2$, and consider a cylinder $B_r(y)\times(s,t)\subset\Omega_T$. Assume that
\begin{equation}\label{eq2.3}
u\leqslant \mu^+,\quad \text{in}\quad B_r(y)\times (s, t),
\end{equation}
Then there exists a constant $\gamma>0$, depending only on the data and on $\beta$, such that for every $\varepsilon>0$ and every piecewise smooth cutoff function $\zeta$ vanishing on $\partial B_r(y)$ and satisfying $0\leqslant\zeta\leqslant 1$, the following inequality holds:
\begin{equation}\label{eq2.4}
\begin{aligned}
&\int\limits_s^t\int\limits_{B_r(y)}\Big(\frac{\tau-s}{t-s}\Big)^\alpha\frac{u^{(m-1)(p-1)}}{(\mu^+-u+\varepsilon)^\beta}|D u|^{p}\,\zeta^{p}\,dx d\tau
\\&\leqslant\gamma \int\limits_{B_r(y)\times\{t\}}\int\limits^{\mu^+}_u\frac{d z}{(\mu^+-z +\varepsilon)^{\beta-1}}\,dx
+\gamma [\mu^+]^{\lambda-p+1}\int\limits_s^t\int\limits_{B_r(y)}(\mu^+-u+\varepsilon)^{p-\beta}\,|D \zeta|^{p}\,dx d\tau.
\end{aligned}
\end{equation}
\end{lemma}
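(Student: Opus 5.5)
Mirroring the proof of Lemma~\ref{lem2.3}, I test the subsolution inequality with a function that increases in $u$, so that the subsolution direction gives the right sign. The natural choice is
\[
\varphi(u)=\big(\mu^+-u+\varepsilon\big)^{1-\beta}\quad\text{(nonnegative, with }\varphi'(u)=(\beta-1)(\mu^+-u+\varepsilon)^{-\beta}>0\text{)},
\]
used through the test function $\varphi(u)\big(\frac{\tau-s}{t-s}\big)^\alpha\zeta^p$ and integrated over $B_r(y)\times(s,t)$. Hypothesis \eqref{eq2.3} guarantees that $\mu^+-u+\varepsilon\geqslant\varepsilon>0$, so the test function is bounded and admissible. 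As in the previous lemmas, the use of $u_t$ is justified via the Steklov/Kinnunen--Lindqvist averaging of \cite{KL}.

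\textbf{Time term.} With $\Phi(u)=\int_0^u\varphi(z)\,dz$ (or the primitive normalized at $\mu^+$), the parabolic term becomes
\[
\int_{B_r}\Phi(u)\Big(\tfrac{\tau-s}{t-s}\Big)^\alpha\zeta^p\Big|_{\tau=t}-\iint\Phi(u)\,\partial_\tau\Big(\tfrac{\tau-s}{t-s}\Big)^\alpha\zeta^p .
\]
The weight vanishes at $\tau=s$, so no initial term appears. Choose the primitive $\Phi(u)=-\int_u^{\mu^+}\varphi(z)\,dz\leqslant0$; this is legitimate because $\int\Phi\,\zeta_\tau$-type constants cancel for a time-independent additive constant only after accounting for the weight, so I handle it as follows. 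Since $\Phi\leqslant0$ and $\partial_\tau$ of the weight is $\geqslant0$, the second term has a favorable sign. The boundary term at $\tau=t$ is $-\int\int_u^{\mu^+}\varphi\,dz\,\zeta^p$, which after moving it to the right side gives exactly the first term of \eqref{eq2.4}. (If the additive constant causes trouble, I use instead $\int_0^u\varphi$ and bound it by $\int_0^{\mu^+}\varphi$; but the $\mu^+$-normalized primitive is what matches \eqref{eq2.4}, and its sign is what makes the $\partial_\tau$ term drop.)

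\textbf{Spatial terms.} Write $D u^{m^-}$ in \eqref{eq1.2} in terms of $Du$. The product $u^{(m-m^-)(p-1)}|Du^{m^-}|^p$ equals $u^{(m-1)(p-1)}|Du|^p$ up to powers of $u^{m^- -1}$; I expect these to combine after multiplying by the chain-rule factor. For $m\geqslant1$ this is immediate. For $m<1$, one tests the $A\cdot Du$ pairing through $Du=Du^{m}/(mu^{m-1})$; this is where the powers must be tracked carefully. The coercive part gives
\[
(\beta-1)K_1\iint(\cdot)^\alpha\frac{u^{(m-1)(p-1)}|Du|^p}{(\mu^+-u+\varepsilon)^\beta}\zeta^p .
\]
The cross term $p\zeta^{p-1}\varphi\,\mathbf A\cdot D\zeta$ is bounded by $K_2u^{(m-1)(p-1)}|Du|^{p-1}(\mu^+-u+\varepsilon)^{1-\beta}\zeta^{p-1}|D\zeta|$. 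Young's inequality, splitting as $(\mu^+-u+\varepsilon)^{-\beta(p-1)/p}$ times $(\mu^+-u+\varepsilon)^{1-\beta+\beta(p-1)/p}$, absorbs the gradient part and leaves
\[
\gamma\iint u^{(m-1)(p-1)}(\mu^+-u+\varepsilon)^{p-\beta}|D\zeta|^p .
\]

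\textbf{Main obstacle.} The remaining difficulty is replacing $u^{(m-1)(p-1)}$ by $[\mu^+]^{\lambda-p+1}=[\mu^+]^{(m-1)(p-1)}$. This is immediate when $m\geqslant1$, using $u\leqslant\mu^+$. When $m<1$ the power is negative and blows up where $u$ is near $0$, so there I must exploit the structure: the degenerate coefficient appears in \eqref{eq1.2} as $u^{(m-m^-)(p-1)}|Du^{m^-}|^{p-1}$. I would therefore do Young in the $Du^m$ variables, so the leftover is $(\mu^+-u+\varepsilon)^{p-\beta}$ times a factor that is bounded in terms of powers of $\mu^+$. Concretely, I would split into the regions $u\geqslant\mu^+/2$, where $u\sim\mu^+$ and the bound is direct, and $u<\mu^+/2$, where $\mu^+-u+\varepsilon\geqslant\mu^+/2$ and the weight $(\mu^+-u+\varepsilon)^{-\beta}$ can be traded for powers of $\mu^+$ to control the leftover term. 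I expect this case analysis to be the delicate step.
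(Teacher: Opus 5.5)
For $m\geqslant 1$ your argument is the paper's proof. You use the same test function $(\mu^+-u+\varepsilon)^{1-\beta}\big(\frac{\tau-s}{t-s}\big)^\alpha\zeta^p$ and the same Young splitting, which leaves $\gamma\iint u^{(m-1)(p-1)}(\mu^+-u+\varepsilon)^{p-\beta}|D\zeta|^p\,dx\,d\tau$. You then make the same final use of \eqref{eq2.3} to replace $u^{(m-1)(p-1)}$ by $[\mu^+]^{(m-1)(p-1)}=[\mu^+]^{\lambda-p+1}$.

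Your treatment of the time term is slightly tighter than the paper's. Normalizing the primitive at $\mu^+$ gives exactly the first term of \eqref{eq2.4}. The paper's intermediate quantity $(\mu^+-u+\varepsilon)^{2-\beta}$ exceeds $(2-\beta)\int_u^{\mu^+}(\mu^+-z+\varepsilon)^{1-\beta}\,dz$ by $\varepsilon^{2-\beta}$. Your hedge about the additive constant is unnecessary: with $w(\tau)=\big(\frac{\tau-s}{t-s}\big)^\alpha$, a constant $C$ added to $\Phi$ contributes $C\int\zeta^p\,dx\,\big(w(t)-w(s)\big)-C\iint w'\zeta^p\,dx\,d\tau=0$.

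What does not work is your plan for $m<1$. The coercivity and Young steps go through for every $m>0$. With $m^-=m$, \eqref{eq1.2} gives $\mathbf{A}\cdot Du\geqslant K_1m^{p-1}u^{(m-1)(p-1)}|Du|^p$ and $|\mathbf{A}|\leqslant K_2m^{p-1}u^{(m-1)(p-1)}|Du|^{p-1}$. The only obstruction is the factor $u^{(m-1)(p-1)}$ in the leftover term, which is unbounded as $u\to 0$.

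On $\{u<\mu^+/2\}$, the bound $\mu^+-u+\varepsilon\geqslant\mu^+/2$ controls only the weight $(\mu^+-u+\varepsilon)^{p-\beta}$, not this factor, so the case split gains nothing. Doing Young in the $Du^m$ variables produces the identical leftover. The coercive term $\frac{(\beta-1)K_1}{m}(\mu^+-u+\varepsilon)^{-\beta}u^{1-m}|Du^m|^p$ degenerates at $u=0$, while the cross term $pK_2(\mu^+-u+\varepsilon)^{1-\beta}|Du^m|^{p-1}|D\zeta|\zeta^{p-1}$ does not.

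The paper does not get around this either. Its last step, ``applying \eqref{eq2.3} to the second term'', is valid only when $(m-1)(p-1)\geqslant 0$, i.e.\ $m\geqslant 1$. That is also the only case in which the lemma is used: in Theorem~\ref{lem3.2}, $\lambda>1$ and $p<2$ force $m>1$. So state and prove the lemma for $m\geqslant 1$ and drop the case analysis, rather than presenting it as covering $m<1$.
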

\begin{proof}
Test \eqref{eq1.3} by $(\mu^+-u +\varepsilon)^{1-\beta}\big(\frac{\tau-s}{t-s}\big)^\alpha \zeta^{p}$ and integrate over $B_r(y)\times(s, t)$.  The use of such a test function is justified, modulus a standard
 averaging process, by making use of the alternate weak formulation \cite{KL}. Using conditions \eqref{eq1.2} and Young's inequality, we obtain
\begin{equation*}
\begin{aligned}
&K_1(\beta-1)\int\limits_s^t\int\limits_{B_r(y)}\Big(\frac{\tau-s}{t-s}\Big)^\alpha
\frac{u^{(m-1)(p-1)}}{(\mu^+-u +\varepsilon)^\beta}|D u|^{p}\,\zeta^{p}\,dx d\tau
\\&\leqslant
\gamma \int\limits_{B_r(y)\times\{t\}}(\mu^+-u +\varepsilon)^{2-\beta}\,\zeta^{p}\,dx+
\gamma  \int\limits_s^t\int\limits_{B_r(y)}u^{(m-1)(p-1)}(\mu^+-u+\varepsilon)^{p-\beta}\,|D \zeta|^{p}\,dx d\tau.
\end{aligned}
\end{equation*}
By representing the first term on the right-hand side of the previous inequality as an integral with a variable lower limit and applying condition \eqref{eq2.3} to the second term, we derive \eqref{eq2.4}.
\end{proof}
%%%%%%%%%%%%%%%%%%%%%%%%%%%%%%%%%%%%%%%%%%%%%%%%%%%%%%%%%%%%%%%%%%%%%%%%%%%%%%%%%%%%%%%%%%%%%%%%%%%%%%%%%%%%%%%%%%%%%%%%%%
\section{$L^{1}_{\mathrm{loc}}-L^{1}_{\mathrm{loc}}$ Harnack Inequalities}\label{Main components}

In this section, we establish the main components of the proof of Proposition~\ref{pr1.1}, the $L^{1}_{\mathrm{loc}}$-$L^{1}_{\mathrm{loc}}$ Harnack inequalities. The following theorem states the $L^{1}_{\mathrm{loc}}$-$L^{1}_{\mathrm{loc}}$ Harnack inequality in the case $\lambda<1$. Corresponding results for equation~\eqref{dnleq3} can be found in \cite[Proposition~7.1]{BogDuzGiaLiaSch}.

\begin{theorem}\label{lem3.1}
Let $u$ be a nonnegative, local weak solution to \eqref{eq1.1}-\eqref{eq1.2}, and assume that $\lambda<1$. Then there exists a constant $\gamma>0$, depending only on the data, such that for any cylinder $Q_{2r,t-s}(y,t)\subset\Omega_T$, the following estimate holds:
\begin{equation}\label{eq3.1}
\sup\limits_{s\leqslant \tau\leqslant t}\fint\limits_{B_{r}(y)}u(x, \tau)\, dx \leqslant \gamma
\inf\limits_{s\leqslant \tau \leqslant t}\fint\limits_{B_{2 r}(y)} u(x, \tau)\,dx +\gamma
\Big(\frac{t-s}{r^{p}}\Big)^{\frac{1}{1-\lambda}}.
\end{equation}
\end{theorem}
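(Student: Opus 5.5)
We follow the classical DiBenedetto scheme for the $L^1$-$L^1$ Harnack inequality in the fast diffusion range, adapted as in \cite[Proposition~7.1]{BogDuzGiaLiaSch}. Fix $s\leqslant \tau_1,\tau_2\leqslant t$ and radii $r\leqslant \rho<R\leqslant 2r$. Take a cutoff $\zeta(x)$ with $\zeta=1$ on $B_\rho(y)$, vanishing on $\partial B_R(y)$, and $|D\zeta|\leqslant \gamma/(R-\rho)$. Test the weak formulation \eqref{eq1.3} (as a solution, with both inequalities) with $\zeta^p$ on the time interval between $\tau_1$ and $\tau_2$. This gives
\[
\int_{B_\rho}u(x,\tau_1)\,dx\leqslant \int_{B_{R}}u(x,\tau_2)\,dx+\frac{\gamma}{R-\rho}\int_s^t\int_{B_R}u^{(m-m^-)(p-1)}|Du^{m^-}|^{p-1}\zeta^{p-1}\,dx\,d\tau .
\]
Taking the infimum over $\tau_2$ yields the $\gamma\inf_\tau\fint_{B_{2r}}u$ term. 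The task is then to control the gradient term by a small multiple of $S_R:=\sup_{s\leqslant\tau\leqslant t}\int_{B_R}u(\cdot,\tau)\,dx$ plus the correction.

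For the gradient term we use H\"older's inequality with a weight $(u+\varepsilon)^{\beta}$, together with Lemma \ref{lem2.3}. Write $u^{(m-m^-)(p-1)}|Du^{m^-}|^{p-1}$ as the product of $\big[u^{(m-m^-)(p-1)}|Du^{m^-}|^p(u^{m^-}+\varepsilon^{m^-})^{-\beta}\tau_\alpha\big]^{(p-1)/p}$ with the complementary factor. Here $\tau_\alpha=((\tau-s)/(t-s))^\alpha$, and its negative power, integrated in time, is harmless provided $\alpha(p-1)<1$. Lemma \ref{lem2.3} bounds the first factor by $\int_{B_R}(u+\varepsilon)^{1-m^-(\beta-1)}$ plus $(R-\rho)^{-p}\iint(u+\varepsilon)^{\lambda-m^-(\beta-1)}$.

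We choose $\beta$ slightly larger than $1$, say $m^-(\beta-1)$ small. Then all the resulting powers of $u$ are at most $1$, because $\lambda<1$, and H\"older in space converts them into powers of $S_R$ times powers of $|B_R|$. The complementary factor produces $\iint (u+\varepsilon)^{\lambda+(\beta m^- -\ldots)}$, again with exponent below one. Collecting terms, we expect an estimate of the form
\[
S_\rho\leqslant \gamma\inf_\tau\int_{B_{2r}}u+\gamma\Big(\frac{(t-s)}{(R-\rho)^p}\Big)^{a}\,r^{Nb}\,S_R^{\,c},\qquad c<1,
\]
with $\varepsilon\to0$. The exponents must balance by scaling: $u\sim (t-s)^{1/(1-\lambda)}r^{-p/(1-\lambda)}$ leaves the inequality invariant. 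This check fixes the bookkeeping.

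Young's inequality with $c<1$ then gives $S_\rho\leqslant \tfrac12 S_R+\gamma\inf\int_{B_{2r}}u+\gamma (2r/(R-\rho))^{\kappa}\,r^N((t-s)/r^p)^{1/(1-\lambda)}$. A standard iteration lemma on the radii $r\leqslant\rho<R\leqslant2r$ absorbs $\tfrac12 S_R$. Dividing by $|B_r|$ gives \eqref{eq3.1}.

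The main obstacle is the gradient estimate. We must pick $\beta$ and $\alpha$ so that the time weight is integrable, the powers of $u$ stay at most $1$ so that H\"older in space applies, and the exponent $c$ on $S_R$ is strictly less than $1$. For $m\geqslant1$ (so $m^-=1$) versus $m<1$ the weight $u^{(m-m^-)(p-1)}$ behaves differently, so we would check both cases separately. The first choice to try is $\beta=1+\frac{1-\lambda}{2m^-}$, perhaps restricted further by $\beta<1+1/m^-$.
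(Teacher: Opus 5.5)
Your proposal is correct and follows essentially the paper's proof: a time-independent cutoff test, the H\"older split with weight $\big(\frac{\tau-s}{t-s}\big)^{\alpha}(u^{m^-}+\varepsilon^{m^-})^{-\beta}$, Lemma~\ref{lem2.3} for the first factor, H\"older in space for the complementary factor, Young with the sublinear power $\frac{p-1+\lambda}{p}$, and iteration in the radii. The only difference is that you send $\varepsilon\to0$, whereas the paper fixes $\varepsilon=\big(\frac{t-s}{r^p}\big)^{\frac{1}{1-\lambda}}$ and uses $(u+\varepsilon)^{\lambda-1}\leqslant\varepsilon^{\lambda-1}$; your variant still works, producing the two terms $\frac{(t-s)^{1/p}}{R-\rho}|B_R|^{\frac{1-\lambda}{p}}S_R^{\frac{p-1+\lambda}{p}}$ and $\frac{t-s}{(R-\rho)^p}|B_R|^{1-\lambda}S_R^{\lambda}$, both absorbed by Young with the right scaling, but it additionally needs $m^-(\beta-1)\leqslant\lambda$ (so that $\iint(u+\varepsilon)^{\lambda-m^-(\beta-1)}|D\zeta|^p$ stays bounded) together with $\lambda+m^-(\beta-1)(p-1)\leqslant1$, which rules out your trial value $\beta=1+\frac{1-\lambda}{2m^-}$ when $\lambda<\frac13$ or $p>3$, so take e.g.\ $m^-(\beta-1)=\frac12\min\{\lambda,\frac{1-\lambda}{p-1}\}$ (no case split between $m\geqslant1$ and $m<1$ is needed, since the powers combine to $u^{\lambda+m^-(\beta-1)(p-1)}$ in both cases).
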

\begin{proof}
Without loss of generality, we set $s=0$.
For $j=0,1,2,\dots$, define
$$
r_j:= \sum\limits^{j}\limits_{l=1} \frac{r}{2^{l}},
\qquad
\bar{r}_j := \frac{r_j+ r_{j+1}}{2},
$$
and set
$$
B_j:=B_{r_j}(y),
\qquad
\bar B_j:=B_{\bar{r}_j}(y),\qquad B_j\subset\bar B_j\subset B_{j+1}.
$$
Let $\zeta_j\in C_0^1(\bar B_j)$ be a cutoff function satisfying
$$
\zeta_j\equiv1 \text{ in } B_j,
\qquad
0\leqslant\zeta_j\leqslant 1,
\qquad
|D_i\zeta_j|\leqslant \gamma\,2^{j}r^{-1},
\quad i=1,\dots,N.
$$
Fix $0<t_1<t_2<t$.
Testing \eqref{eq1.3} with $\zeta_j(x)$ and integrating over
$(t_1,t_2)\times\bar B_j$, we obtain
\begin{equation}\label{pre-recurs_ineq}
\fint\limits_{\bar B_j}u(x,t_2)\zeta_j\,dx
\leqslant
\fint\limits_{\bar B_j}u(x,t_1)\zeta_j\,dx
+
\frac{\gamma\,2^j}{r}
\int\limits_{t_1}^{t_2}\fint\limits_{\bar B_j}
u^{(m-m^-)(p-1)}
|D u^{m^-}|^{p-1}\,dx\,d\tau .
\end{equation}
Choose $t_1$ such that
$$
\fint\limits_{B_{2r}(y)} u(x, t_{1})\,dx= \inf\limits_{0<\tau<t} \fint\limits_{B_{2r}(y)\times\{\tau\}} u\,\,\,dx,
$$
Since $\bar B_j\subset B_{2 r}$, we obtain
$$
\fint\limits_{\bar B_j}u(x,t_1)\,\zeta_j\,dx
\leqslant \gamma
\inf\limits_{0<\tau<t} \fint\limits_{B_{2r}(y)\times\{\tau\}} u\,\,\,dx.
$$
Define $J_j:=\sup\limits\limits_{0<\tau<t}\,\,\fint\limits_{B_j}u(x,\tau)\,dx$, we arrive at
\begin{equation}\label{recurs_1}
J_j
\leqslant \gamma
\inf\limits_{0<\tau<t}\fint\limits_{B_{2r}(y)\times\{\tau\}}u\,\,\,dx
+
\frac{\gamma\,2^j}{r}
\int\limits_0^t\fint\limits_{\bar B_j}
u^{(m-m^-)(p-1)}
|D u^{m^-}|^{p-1}\,dx\,d\tau .
\end{equation}
In order to estimate the second term in the previous inequality, we fix  $\varepsilon =  \left(\frac{t}{r^{p}}\right)^{\frac{1}{1-\lambda}},$
and let $\alpha>0$ and $\beta>1$ be constants to be chosen later.  
Applying H\"older's inequality, we obtain
\begin{align}\label{eq5.4}
&\frac{\gamma\,2^j}{r}\int\limits_0^{t}\fint\limits_{\bar B_j}u^{(m-m^-)(p-1)}|D u^{m^-}|^{p-1}\,dx d\tau
\leqslant  \frac{\gamma\,2^j}{r} \Big(\int\limits_0^{t}\fint\limits_{ B_{j+1}}\Big(\frac{\tau}{t}\Big)^{\alpha}\frac{u^{(m-m^-)(p-1)}}{(u^{m^{-}}+\varepsilon^{m^{-}})^{\beta}}|D u^{m^-}|^{p}\,dx d\tau\Big)^{\frac{p-1}{p}}\notag\\
&\quad\times\Big(\int\limits_0^{t}\fint\limits_{B_{j+1}}
\Big(\frac{t}{\tau}\Big)^{\alpha(p-1)} u^{(m-m^{-})(p-1)}(u^{m^{-}}+\varepsilon^{m^{-}})^{\beta(p-1)}\,dx\,d\tau\Big)^{\frac{1}{p}}= \frac{\gamma 2^j}{r}  A_1^{\frac{p-1}{p}} A_2^{\frac{1}{p}}.
\end{align}
In order to estimate the right-hand side of \eqref{eq5.4}, we start by applying the elementary inequality for positive numbers: $u^{m^-(\frac{m}{m^-}-1)(p-1)} 
\leqslant(u^{m^-}+\varepsilon^{m^-})^{(\frac{m}{m^-}-1)(p-1)}, \varepsilon>0, $ and then take the supremum of the spatial average over the time interval $[0,t]$, to get
\begin{align*}
A_2&\leqslant\int\limits_0^{t}\fint\limits_{B_{j+1}}\Big(\frac{t}{\tau}\Big)^{\alpha(p-1)} (u^{m^{-}}+\varepsilon^{m^{-}})^{(\frac{m}{m^{-}}+\beta -1)(p-1)} dx d\tau
\\&\leqslant \sup\limits_{0<\tau<t}\fint\limits_{B_{j+1}}(u+\varepsilon)^{(m+m^{-}(\beta -1))(p-1)} dx \ \int\limits_0^t \Big(\frac{t}{\tau}\Big)^{\alpha(p-1)} d\tau.
\end{align*}
In what follows, we fix the values $\alpha$ and $\beta$. By choosing $\alpha$ such that $\alpha(p-1)<1$, the time integral on the right-hand side of the previous inequality is finite and can be estimated as
$\int_0^t \Big(\frac{t}{\tau}\Big)^{\alpha(p-1)}\, d\tau 
\leqslant \gamma\, t,$
where $\gamma$ is a positive constant depending only on $\alpha$ and $p$. Furthermore by choosing $\beta$ so that $(m+m^-(\beta-1))(p-1)<1$,  we can apply H\"older's inequality to the spatial integral. Hence,
\begin{equation}\label{eq5.5}
A_2\leqslant \gamma \,t \,\Big(\sup\limits_{0<\tau<t}\fint\limits_{ B_{j+1}\times \{\tau\}}(u+\varepsilon) \,dx\Big)^{(m+m^{-}(\beta -1))(p-1)}.
\end{equation}
To estimate $A_1$ of \eqref{eq5.4}, we apply Lemma~\ref{lem2.3}. 
Choosing $\beta$ such that $m^{-}(\beta-1)<1$,  using the simple estimate   $(u+\varepsilon)^{\lambda-1}\leqslant \varepsilon^{\lambda-1}$. Recalling the choice of $\varepsilon$
 and applying H\"older's inequality, we obtain
\begin{equation}\label{eq5.6}
\begin{aligned}
A_1&\leqslant \gamma\fint\limits_{B_{j+1}\times\{t\}}(u+\varepsilon)^{1-m^-(\beta-1)} dx+\gamma \int\limits_0^t\fint\limits_{B_{j+1}}
(u+\varepsilon)^{\lambda-m^-(\beta-1)}|D \zeta_{j}|^{p}\,dxd\tau\\
&\leqslant \gamma 2^{jp}\bigg[1+ \frac{t}{r^{p}\varepsilon^{1-\lambda}}\bigg]\sup\limits_{0<\tau<t}\fint\limits_{B_{j+1}\times\{\tau\}}
(u+\varepsilon)^{1-m^-(\beta-1)} dx\\&\leqslant \gamma 2^{jp}
\Big(\sup\limits_{0<\tau<t}\fint\limits_{B_{j+1}\times\{\tau\}}(u+\varepsilon)\,\, dx\Big)^{1-m^-(\beta-1)}.
\end{aligned}
\end{equation}
Combining \eqref{eq5.4}--\eqref{eq5.6} and applying Young's inequality with $\delta \in (0,1)$ and exponents  $\frac{p}{p-(1-\lambda)}$, $\frac{p}{1-\lambda}$, we derive 
\begin{align*}
&\frac{\gamma\,2^j}{r}\int\limits_0^{t}\fint\limits_{\bar B_{j}}u^{(m-1)(p-1)}|D u|^{p-1}\,dx d\tau
\leqslant \gamma 2^{j \gamma} \Big(\frac{t}{r^{p}}\Big)^{\frac{1}{p}}\Big(\sup\limits_{0<\tau<t}\fint\limits_{B_{j+1}\times\{\tau\}}(u+\varepsilon) \,\,dx\Big)^{\frac{(p-1)(m+1)}{p}}\\
&\leqslant
\delta \sup\limits_{0<\tau<t}\fint\limits_{B_{j+1}\times\{\tau\}}(u+\varepsilon)\,\, dx+\frac{\gamma 2^{j\gamma}}{\delta^{\gamma}}\Big(\frac{t}{r^{p}}\Big)^{\frac{1}{1-\lambda}}\leqslant \delta \sup\limits_{0<\tau<t}\fint\limits_{B_{j+1}\times\{\tau\}}(u+\varepsilon)\,\, dx+\frac{\gamma 2^{j\gamma}}{\delta^{\gamma} } \varepsilon,
\end{align*}
where $\gamma$ also depends on $\alpha$ and $\beta$, which are chosen as $\alpha=\min\Big(\frac{1}{2}, \frac{1}{2(p-1)}\Big)$ and $\beta=1+\frac{1}{2m^{-}} \min\big(1, \frac{1-\lambda}{p-1}\big)$.
Combining this estimate with \eqref{recurs_1}, we obtain
\begin{equation*}
J_{j} \leqslant \delta J_{j+1} + \gamma \delta^{-\gamma} 2^{j \gamma}\big(\inf\limits_{0< \tau <t} \fint\limits_{B_{2r}(y)\times\{\tau\}} u\ dx
+ \varepsilon\big), \quad j=0, 1, 2,\ldots
\end{equation*}
Iterating this inequality and choosing $\delta$ sufficiently small, we arrive at \eqref{eq3.1}.
\end{proof}

\noindent The following result provides a variant of the $L^{1}_{\mathrm{loc}}-L^{1}_{\mathrm{loc}}$ Harnack inequality in the case $\lambda>1$ and $p<2$. To the best of our knowledge, this result has not been established previously, even for equation~\eqref{dnleq3}.
%and its proof constitutes one of the novel contributions of this paper.

\begin{theorem}\label{lem3.2}
Let $u$ be a nonnegative, local weak solution to \eqref{eq1.1}-\eqref{eq1.2}, and assume that $\lambda>1$ and $p<2$. Consider a cylinder $Q_{2r,t-s}(y,t)\subset\Omega_T$ and suppose that $u\leqslant\mu^{+}$ in $Q_{2r,t-s}(y,t)$.
Then there exists a constant $\gamma>0$, depending only on the data, such that
\begin{multline}\label{eq3.3}
\sup\limits_{s\leqslant \tau\leqslant t}\fint\limits_{B_{r}(y)}(\mu^+-u(x, \tau))\, dx \leqslant \gamma
\inf\limits_{s\leqslant\tau \leqslant t}\fint\limits_{B_{2 r}(y)} (\mu^+-u(x, \tau))\,dx +\gamma \,
[\mu^+]^\frac{(m-1)(p-1)}{2-p}\Big(\frac{t-s}{r^{p}}\Big)^{\frac{1}{2-p}}.
\end{multline}
\end{theorem}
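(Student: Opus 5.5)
We follow the scheme of the proof of Theorem \ref{lem3.1}, applied now to $v:=\mu^+-u\geqslant 0$ and with Lemma \ref{lem2.4} in place of Lemma \ref{lem2.3}. Set $s=0$ and take the radii $r_j$, $\bar r_j$, the balls $B_j\subset\bar B_j\subset B_{j+1}$ and the cutoffs $\zeta_j$ exactly as before. Since $\lambda>1$ forces $m>1$, we have $m^-=1$, and the structure conditions read $|\mathbf A|\leqslant K_2u^{(m-1)(p-1)}|Du|^{p-1}$.

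\textbf{Step 1: recursion.} Test \eqref{eq1.3} with $\zeta_j$ over $(t_1,t_2)$; this is allowed because $u$ is a solution, so the sign may be reversed. This gives
\[
\fint_{\bar B_j}v(x,t_2)\zeta_j\,dx\leqslant \fint_{\bar B_j}v(x,t_1)\zeta_j\,dx+\frac{\gamma2^j}{r}\int_{0}^{t}\fint_{\bar B_j}u^{(m-1)(p-1)}|Du|^{p-1}\,dx\,d\tau .
\]
Choose $t_1$ to realize the infimum of $\fint_{B_{2r}}v$. Setting $J_j:=\sup_{0<\tau<t}\fint_{B_j}v$, we obtain the analogue of \eqref{recurs_1}.

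\textbf{Step 2: the gradient term.} Split the gradient term by H\"older's inequality as in \eqref{eq5.4}, using the weight $(\tau/t)^{\alpha}(v+\varepsilon)^{-\beta}$. This gives $A_1^{(p-1)/p}A_2^{1/p}$, where
\[
A_1=\int_0^t\fint_{B_{j+1}}\Big(\frac{\tau}{t}\Big)^\alpha\frac{u^{(m-1)(p-1)}|Du|^p}{(v+\varepsilon)^\beta}, \qquad
A_2=\int_0^t\fint_{B_{j+1}}\Big(\frac{t}{\tau}\Big)^{\alpha(p-1)}u^{(m-1)(p-1)}(v+\varepsilon)^{\beta(p-1)} .
\]
For $A_2$, bound $u^{(m-1)(p-1)}\leqslant[\mu^+]^{(m-1)(p-1)}$. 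Take $\alpha(p-1)<1$ and pick $\beta>1$ with $\beta(p-1)<1$, which is possible since $p<2$. Jensen's inequality then gives
\[
A_2\leqslant\gamma[\mu^+]^{(m-1)(p-1)}\,t\,\Big(\sup_\tau\fint_{B_{j+1}}(v+\varepsilon)\Big)^{\beta(p-1)} .
\]
For $A_1$, apply Lemma \ref{lem2.4}. Its first term is $\int_u^{\mu^+}(\mu^+-z+\varepsilon)^{1-\beta}dz\leqslant\gamma(v+\varepsilon)^{2-\beta}$. Its second term carries $[\mu^+]^{\lambda-p+1}=[\mu^+]^{(m-1)(p-1)}$ and $(v+\varepsilon)^{p-\beta}=(v+\varepsilon)^{2-\beta}(v+\varepsilon)^{p-2}\leqslant\varepsilon^{p-2}(v+\varepsilon)^{2-\beta}$; this uses $p<2$ and is the exact analogue of $(u+\varepsilon)^{\lambda-1}\leqslant\varepsilon^{\lambda-1}$. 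We therefore choose
\[
\varepsilon:=[\mu^+]^{\frac{(m-1)(p-1)}{2-p}}\Big(\frac{t}{r^p}\Big)^{\frac1{2-p}},
\]
which makes $t\,[\mu^+]^{(m-1)(p-1)}r^{-p}\varepsilon^{p-2}=1$. Since also $1<\beta<2$, Jensen's inequality yields
\[
A_1\leqslant\gamma2^{jp}\Big(\sup_\tau\fint_{B_{j+1}}(v+\varepsilon)\Big)^{2-\beta}.
\]

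\textbf{Step 3: combining and iterating.} Combining the two bounds, the gradient term is at most
\[
\gamma2^{j\gamma}\Big([\mu^+]^{(m-1)(p-1)}\frac{t}{r^p}\Big)^{1/p}S^{\frac{(p-1)(2-\beta)+\beta(p-1)}{p}}=\gamma2^{j\gamma}\big(\varepsilon^{2-p}\big)^{1/p}S^{\frac{2(p-1)}{p}},
\]
where $S=\sup_\tau\fint_{B_{j+1}}(v+\varepsilon)$. Note that the $\beta$-dependence in the exponent cancels. Since $\frac{2-p}{p}+\frac{2(p-1)}{p}=1$, Young's inequality with exponents $\frac{p}{2(p-1)}$ and $\frac{p}{2-p}$ gives the bound $\delta S+\gamma\delta^{-\gamma}2^{j\gamma}\varepsilon$. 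Hence
\[
J_j\leqslant\delta J_{j+1}+\gamma\delta^{-\gamma}2^{j\gamma}\Big(\inf_\tau\fint_{B_{2r}}v+\varepsilon\Big).
\]
The standard iteration with $\delta$ small, using that $J_j\leqslant\mu^+<\infty$, yields \eqref{eq3.3}.

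\textbf{Main obstacle.} The delicate points are the bookkeeping of exponents and the justification of the testing. The Young step requires $p>1$, which holds. The apparent need for $2(p-1)/p<1$, i.e.\ $p<2$, is also satisfied. One must further check that the constraints $\beta(p-1)<1$ and $\beta<2$ are simultaneously compatible, which they are for $\beta$ close to $1$. Finally, the choice of $t_1$ must be handled via an infimum approximation, since $v$ is only continuous in time into $L^1$.
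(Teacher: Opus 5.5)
Your proposal is correct and follows essentially the paper's route. You test with a time-independent cutoff, split the flux term by H\"older with the weight $(\tau/t)^{\alpha}(\mu^+-u+\varepsilon)^{-\beta}$, and control $A_1$ by Lemma~\ref{lem2.4} and $A_2$ by $u\leqslant\mu^+$ and Jensen; you then fix $\varepsilon^{2-p}=[\mu^+]^{(m-1)(p-1)}t/r^p$, absorb by Young and iterate. The differences are cosmetic: you iterate over the dyadic balls of Theorem~\ref{lem3.1} instead of $\rho\to\rho(1+\sigma)$, and you bound, with $v=\mu^+-u$, $(v+\varepsilon)^{p-\beta}\leqslant\varepsilon^{p-2}(v+\varepsilon)^{2-\beta}$ rather than the paper's $\varepsilon^{p-\beta-1}(v+\varepsilon)$, which makes the $\beta$-dependence cancel and gives Young exponents $\frac{p}{2(p-1)},\frac{p}{2-p}$ in place of $\frac{p}{(p-1)(\beta+1)},\frac{p}{1-(p-1)\beta}$.
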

\begin{proof}
Assume, without loss of generality, that $s=0$. 
Fix $\sigma \in (0,1)$ and let $r \leqslant \rho < \rho(1+\sigma) \leqslant 2r$.
Let $\zeta \in C_0^1(B_{\rho(1+\sigma)}(y))$ be a cutoff function such that $\zeta \equiv 1$ in $B_\rho(y)$, $0 \leqslant \zeta \leqslant 1$, $|D\zeta| \leqslant \frac{1}{\sigma\rho}$. 
We test the integral inequality \eqref{eq1.3} with the function $\zeta$, where the first term can be rewritten as follows:
$$
\int\limits_{B_{\rho(1+\sigma)}(y)} u(x,t)\,\zeta(x)\,dx \Big|_{t_1}^{t_2}=
\int\limits_{B_{\rho(1+\sigma)}(y)}
\big[(\mu^+ - u(x,t_1))-(\mu^+ - u(x,t_2))\big]
\,\zeta(x)\,dx. 
$$
Observe that since $\lambda>1$ and $p<2$, we have $m>1$, and hence $m^-=1$. Therefore, from \eqref{eq1.3}, using the previous equality and the structural condition \eqref{eq1.2}, we obtain for any $0<t_1<t_2<t$
\begin{equation*}
\fint\limits_{B_{\rho}(y)}(\mu^+ - u(x,t_1))\,dx
\leqslant
\fint\limits_{B_{\rho(1+\sigma)}(y)}(\mu^+ - u(x,t_2))\zeta\,dx
+
\frac{\gamma}{\sigma \rho}
\int\limits_{0}^{t}\fint\limits_{B_{\rho(1+\sigma)}(y)}
u^{(m-1)(p-1)}
|D u|^{p-1}\,dx\,d\tau .
\end{equation*}
Choose $t_1$ and $t_2$ such that
$$
\fint\limits_{B_{\rho}(y)}(\mu^+ - u(x,t_1))\,dx=\sup\limits_{0<\tau<t} \fint\limits_{B_{\rho}(y)} (\mu^+ - u(x,\tau))\, dx,
$$
$$
\fint\limits_{B_{2r}(y)} (\mu^+ - u(x,t_2))\,dx= \inf\limits_{0<\tau<t} \fint\limits_{B_{2r}(y)\times\{\tau\}} (\mu^+ - u)\,\,\,dx.
$$
Denote $\mathcal{J}_{\rho} := \sup\limits_{0<\tau<t} \fint\limits_{B_{\rho}(y)} (\mu^+ - u(x,\tau))\, dx$, and we then arrive at
\begin{equation}\label{pre-recurs_ineq_deg}
\mathcal{J}_{\rho}
\leqslant
\gamma\inf\limits_{0<\tau<t} \fint\limits_{B_{2r}(y)\times\{\tau\}} (\mu^+ - u)\,\,\,dx
+
\frac{\gamma}{\sigma \rho}
\int\limits_{0}^{t}\fint\limits_{B_{\rho(1+\sigma)}(y)}
u^{(m-1)(p-1)}
|D u|^{p-1}\,dx\,d\tau .
\end{equation}
To estimate the second term of the previous inequality, we set
$$\varepsilon=\Big([\mu^+]^{(m-1)(p-1)}\frac{t}{\rho^{p}}\Big)^{\frac{1}{2-p}},$$
and choose the parameters $\alpha$ and $\beta$ so that $0< \alpha < \frac{1}{p-1}$, $1<\beta < \min\big(2, \frac{1}{p-1}\big)$.  By the H\"{o}lder's inequality, we obtain
\begin{align}\label{eq8.18}
&\frac{\gamma}{\sigma \rho}\int\limits_0^{t}\fint\limits_{B_{\rho(1+\sigma)}(y)}u^{(m-1)(p-1)}|D u|^{p-1}\,dx d\tau\leqslant
 \frac{\gamma}{\sigma \rho}\biggr(\int\limits_0^{t}\fint\limits_{B_{\rho (1+\sigma)}(y)}\Big(\frac{\tau}{t}\Big)^{\alpha}\frac{u^{(m-1)(p-1)}}
{(\mu^+-u+\varepsilon)^{\beta}}|D u|^{p}\,\zeta^{p}dx d\tau\biggr)^{\frac{p-1}{p}}\notag
\\&\times\biggr(\int\limits_0^{t}\fint\limits_{B_{\rho (1+\sigma)}(y)}\Big(\frac{t}{\tau}\Big)^
{\alpha(p-1)} u^{(m-1)(p-1)}(\mu^+-u+\varepsilon)^{\beta(p-1)}\,dx\,d\tau\biggr)^{\frac{1}{p}}=\gamma \frac{1}{\sigma \rho} A_1^{\frac{p-1}{p}} A_2^{\frac{1}{p}}.
\end{align}
Next, we estimate the right-hand side of \eqref{eq8.18}.
Since $\alpha(p-1)<1$ and $\beta(p-1)<1$, H\"{o}lder's inequality yields
\begin{equation}\label{eq8.20}
\begin{aligned}
A_2&\leqslant [\mu^+]^{(m-1)(p-1)}\int\limits_0^{t}\fint\limits_{B_{\rho(1+\sigma)}(y)}\Big(\frac{t}{\tau}\Big)^{\alpha(p-1)} (\mu^+-u+\varepsilon)^{\beta(p-1)} dx d\tau
\\&\leqslant [\mu^+]^{(m-1)(p-1)} \int\limits_0^{t}\Big(\frac{t}{\tau}\Big)^{\alpha(p-1)}d\tau \sup\limits_{0<\tau<t}\fint\limits_{B_{\rho(1+\sigma)}(y)\times\{\tau\}}(\mu^+-u+\varepsilon)^{\beta(p-1)} dx
\\&\leqslant \gamma[\mu^+]^{(m-1)(p-1)} \,t\, \Big(\sup\limits_{0<\tau<t}\fint\limits_{B_{\rho(1+\sigma)}(y)\times\{\tau\}}(\mu^+-u+\varepsilon) \,\,dx\Big)^{\beta(p-1)}
\\&\leqslant \gamma\rho^{p}\,\varepsilon^{2-p}\, \big(\mathcal{J}_{\rho(1+\sigma)}+\varepsilon\big)^{\beta(p-1)}.
\end{aligned}
\end{equation}
To estimate $A_1$ we apply Lemma~\ref{lem2.4}. Using our choice of $\varepsilon$, the elementary inequality
$(\mu^+-u+\varepsilon)^{\,p-\beta-1} \leqslant \varepsilon^{\,p-\beta-1},\,\,  p-\beta-1<0,$
and since $1<\beta<2$, applying H\"older's inequality, we obtain
\begin{align}\label{eq8.22}
A_1&\leqslant \negthickspace\gamma \negthickspace\fint\limits_{B_{\rho(1+\sigma)}(y)\times\{t\}}\int\limits_u^{\mu^+}(\mu^+-z+\varepsilon)^{1-\beta}\,dz dx+\frac{\gamma}{(\sigma\rho)^{p}}[\mu^+]^{(m-1)(p-1)} \int\limits_0^t\negthickspace\fint\limits_{B_{\rho(1+\sigma)}(y)}\negthickspace\negthickspace\negthickspace(\mu^+-u+\varepsilon)^{p-\beta}\,dxd\tau \notag
\\&\leqslant\gamma \sup\limits_{0<\tau<t}\fint\limits_{B_{\rho(1+\sigma)}(y)\times\{\tau\}}(\mu^+-u+\varepsilon)^{2-\beta}dx+\frac{\gamma}{\sigma^{p}}\frac{1}{\varepsilon^{\beta-1}} \sup\limits_{0<\tau<t}\fint\limits_{B_{\rho(1+\sigma)}(y)\times\{\tau\}}(\mu^+-u+\varepsilon)\,\, dx \notag
\\&\leqslant \gamma \big(\mathcal{J}_{\rho(1+\sigma)}+\varepsilon \big)^{2-\beta}+\frac{\gamma}{\sigma^{p}}\frac{1}{\varepsilon^{\beta-1}} \big(\mathcal{J}_{\rho(1+\sigma)}+\varepsilon\big)
\leqslant \frac{\gamma}{\sigma^{p}}\frac{1}{\varepsilon^{\beta-1}}\big(\mathcal{J}_{\rho(1+\sigma)}+\varepsilon\big).
\end{align}
Combining \eqref{eq8.18}--\eqref{eq8.22} and applying Young's inequality with $\delta \in (0,1)$ and exponents 
$\frac{p}{(p-1)(\beta+1)}$ and $\frac{p}{1-(p-1)\beta}$, we  obtain
\begin{equation*}
\begin{aligned}
\frac{\gamma}{\sigma \rho}\int\limits_0^{t}\fint\limits_{B_{\rho}(y)}u^{(m-1)(p-1)}|D u|^{p-1}\,dx d\tau
&\leqslant\frac{\gamma}{\sigma^{p}}\Big(\frac{1}{\varepsilon^{\beta-1}} \big(\mathcal{J}_{\rho(1+\sigma)}+\varepsilon\big)\Big)^{\frac{p-1}{p}}\Big(\varepsilon^{2-p}\, \big(\mathcal{J}_{\rho(1+\sigma)}+\varepsilon\big)^{\beta(p-1)}\Big)^{\frac{1}{p}}
\\&\leqslant\delta\mathcal{J}_{\rho(1+\sigma)}+\frac{\gamma}{\delta^{\gamma} \sigma^{\gamma}}\varepsilon,
\end{aligned}
\end{equation*}
where $\gamma$ also depends on $\alpha$ and $\beta$, which are chosen as  $\alpha=\frac{1}{2}$ and $\beta=\frac{1}{2}+\min\big(1, \frac{1}{2(p-1)}\big)$.\\
Mixing last estimate and \eqref{pre-recurs_ineq_deg}, we get
\begin{equation*}
\mathcal{J}_{\rho}
\leqslant\delta\mathcal{J}_{\rho(1+\sigma)}+
\frac{\gamma}{\delta^{\gamma} \sigma^{\gamma}}\Big(\inf\limits_{0<\tau<t} \fint\limits_{B_{2r}(y)\times\{\tau\}} (\mu^+ - u)\,\,\,dx
+\Big([\mu^+]^{(m-1)(p-1)}\frac{t}{r^{p}}\Big)^{\frac{1}{2-p}}\Big).
\end{equation*}
From this, by iteration, we obtain the required estimate \eqref{eq3.3}, which completes the proof of Theorem \ref{lem3.2}.
\end{proof}

%\begin{lemma}\label{lem8.2}
%Let $u$ be a non-negative local weak sub-solution to \eqref{eq1.1}, \eqref{eq1.2}, $\lambda> 1$ and $p<2$, then for any $\sigma$, $\delta \in(0, 1)$ and any $r\leqslant\rho< \rho (1+\sigma)\leqslant 2\,r$ there holds
%\begin{equation}\label{eq8.17}
%\frac{1}{\sigma \rho}\int\limits_0^{t}\fint\limits_{B_{\rho}(y)}u^{(m-1)(p-1)}|D u|^{p-1}\,dx d\tau\leqslant \delta \mathcal{J}+\frac{\gamma}{\delta^{\gamma} \sigma^{\gamma}}\Big([\mu^+]^{(m-1)(p-1)}\frac{t}{r^{p}}\Big)^{\frac{1}{2-p}},
%\end{equation}
%where $\mathcal{J}:=\sup\limits_{s<\tau<t}\fint\limits_{B_{\rho(1+\sigma)}(y)} (\mu^+-u(x, \tau))\,dx$ and $\gamma=\gamma(\text{data})>0$.
%\end{lemma}

%%%%%%%%%%%%%%%%%%%%%%%%%%%%%%%%%%%%%%%%%%%%%%%%%%%%%%%%%%%%%%%%%%%%%%%%%%%%%%%%%%%%%%%%%%%%%%%%%%%%%%%%%%%%%%%%%%%%%%%%%%
\section{Reduction of the Oscillation} \label{sec-red-osc}

Fix a point $(x_0, t_0) \in \Omega_T$ and let $b>0$ be a constant to be determined later. Let $\mu^+$, $\mu^-$, and $\omega$ be numbers satisfying the conditions
$$\mu^+\geqslant \sup\limits_{Q_{r, b\,\theta(\omega, r)}}u,\quad \mu^-\leqslant \inf\limits_{Q_{r, b\,\theta(\omega, r)}} u,\quad \mu^+-\mu^-\leqslant \omega,$$
where 
\[Q_{r, b\,\theta(\omega, r)}:=Q_{r, b\,\theta(\omega, r)}(x_0, t_0).\]
As it is well known, the proof of the local H\"older continuity can be presented once the reduction of the oscillation is settled. This is precisely the content of the following proposition - the main step in the proof of Theorem \ref{th1.1}.

\begin{propo}\label{pr1.1}
Let $u$ be a nonnegative local weak solution to \eqref{eq1.1}-\eqref{eq1.2}, and suppose that one of the condition \eqref{eq1.7}, \eqref{eq1.5}, or \eqref{eq1.6} is fulfilled.  Then there exist constants $b>0$, and $\eta_0, \epsilon_0, \sigma_0\in (0, 1)$ depending only on the data, such that if
\begin{equation}\label{eq1.8}
\mu^+\leqslant (1+\eta_0)\,\omega,
\end{equation}
then
\begin{equation}\label{eq1.9}
\osc\limits_{Q_{\epsilon_0 r,  b\,\theta(\sigma_0\omega, \epsilon_0 r)}} u \leqslant \sigma_0\,\omega.
\end{equation}
\end{propo}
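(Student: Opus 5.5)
The proof will be organized by regime. In all cases hypothesis \eqref{eq1.8}, together with $\mu^+-\mu^-\leqslant\omega$, forces $\mu^-\leqslant\eta_0\omega$. It also gives $\frac12\omega\leqslant\mu^+\leqslant 2\omega$ as soon as $\mu^+\geqslant\frac12\omega$; if instead $\mu^+<\frac12\omega$, the oscillation is already at most $\frac12\omega$ and \eqref{eq1.9} is immediate. So the comparability condition \eqref{eq2.14}, needed for Lemmas \ref{lem2.6}, \ref{lem2.7} and \ref{new_exp_lem}, may be assumed throughout. The aim is to show that either $u\geqslant\xi\omega$ on a smaller cylinder, which raises the infimum, or $u\leqslant\mu^+-\xi\omega$ there, which lowers the supremum. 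In either case the oscillation becomes at most $(1-\xi)\omega+\eta_0\omega$, and we then choose $\sigma_0=1-\xi/2$ and $\eta_0<\xi/2$. The cylinder $Q_{\epsilon_0 r,b\theta(\sigma_0\omega,\epsilon_0 r)}$ is contained in $Q_{r,b\theta(\omega,r)}$ once $\epsilon_0$ is small (this uses $\sigma_0^{1-\lambda}\epsilon_0^p\leqslant1$). Since $\sigma_0$ is close to $1$, this containment holds in both regimes $\lambda<1$ and $\lambda>1$.

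\emph{Case \eqref{eq1.7}, $\lambda<1$.} Split according to the time slices. First alternative: at some time $s$ in the lower half of the cylinder, $|B_{r/2}\cap\{u(\cdot,s)\geqslant\frac14\omega\}|\geqslant\alpha|B_{r/2}|$. Lemma \ref{lem3.3} then gives $u\geqslant\sigma\omega/4$ on $B_r$ over a time window of length comparable to $\theta(\omega,r)$. The constant $b$ is fixed so that this window covers the top portion $Q_{\epsilon_0 r,\ldots}$. If the first alternative fails for every $s$, then $\fint_{B_{r/2}}u(\cdot,s)$ is small, namely $\leqslant(\frac14+\alpha\cdot 2\omega)$ times a constant, on a whole time interval. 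We now invoke Theorem \ref{lem3.1} on $B_{\epsilon r}\subset B_{2\epsilon r}$ and over $(t_0-b\theta,t_0)$. Because $(b\,\theta(\omega,\epsilon r)/(\epsilon r)^p)^{1/(1-\lambda)}=b^{1/(1-\lambda)}\omega$, the tail term is a small multiple of $\omega$ when $b$ is small. Hence $\sup_\tau\fint_{B_{\epsilon r}}u\leqslant\gamma(\text{small})\omega$. Combining this with the $L^1$–$L^\infty$ estimate, which is exactly where $p+N(\lambda-1)>0$ enters, gives $\sup u\leqslant\frac12\omega$ on a smaller cylinder. Equivalently, the set $\{u\geqslant\mu^+-\frac14\omega\}$ has small measure, and Lemma \ref{lem2.6} yields $u\leqslant\mu^+-\frac18\omega$, reducing the supremum. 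The tension is that $b$ must be small for Theorem \ref{lem3.1} but large enough for the expansion-of-positivity window. This is resolved by using the freedom in $\alpha$ and in $\epsilon_0$.

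\emph{Case \eqref{eq1.5}, $\lambda>1$, $p<2$.} The argument is symmetric, with $w=\mu^+-u$ in place of $u$. If $w(\cdot,s)\geqslant a\omega$ on a fraction $\alpha$ of $B_r$ for some early $s$, Lemma \ref{new_exp_lem} propagates $w\geqslant\sigma a\omega$ forward in time, reducing the supremum. Otherwise Theorem \ref{lem3.2} is applied. Its tail term is $[\mu^+]^{(m-1)(p-1)/(2-p)}(b\omega^{1-\lambda})^{1/(2-p)}\sim b^{1/(2-p)}\omega$, using \eqref{eq2.14}. This makes $\fint w$ small on all time slices, so $u$ is close to $\mu^+\geqslant\omega/2$ in measure. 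Lemma \ref{lem2.5}, with $\xi=\frac14$ after rescaling to $\theta(\xi\omega,r)$, then gives $u\geqslant\frac18\omega$, raising the infimum. The condition $p>2N/(N+1)$ is used to pass from $L^1$ smallness of $w$ to the measure smallness that Lemma \ref{lem2.5} requires in the $p$-Laplacian-type singular range, via the $L^1$–$L^\infty$ bound.

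\emph{Case \eqref{eq1.6}, doubly degenerate.} The dichotomy is the classical one. Either $\{u\leqslant\frac14\omega\}$ has small measure in some subcylinder, and Lemma \ref{lem2.5} applies. Or at some time slice $u\leqslant\frac14\omega$ on a positive fraction, so $u\leqslant\mu^+-\frac14\omega$ there. In the latter case Lemma \ref{deg_exp_lem} applied to $\mu^+-u$, combined with Lemma \ref{lem2.7} to carry the bound forward in time, lowers the supremum.

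The main obstacle is the second alternative in the mixed cases. There, the $L^1$ Harnack estimates must be turned into pointwise or measure information on a cylinder with the correct intrinsic scaling. This requires choosing $b$, $\alpha$ and $\epsilon_0$ consistently so that the Harnack tail term is $o(\omega)$ while the expansion-of-positivity time window still covers the target cylinder.
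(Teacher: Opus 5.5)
\textbf{There are genuine gaps; as written, the proposal does not prove the proposition.}

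\textbf{Case \eqref{eq1.6}.} Your second branch needs an expansion of positivity for $\mu^+-u$ when $p>2$, $\lambda>1$, and no such tool is available. Lemma~\ref{deg_exp_lem} is stated for nonnegative supersolutions of \eqref{eq1.1}--\eqref{eq1.2}, and $w=\mu^+-u$ is not one. Written in terms of $w$, the structure conditions carry the weight $(\mu^+-w)^{(m-1)(p-1)}$ rather than a power of $w$; this is exactly the obstruction stressed in the Introduction. The only expansion result for $\mu^+-u$ you have is Lemma~\ref{new_exp_lem}, which requires $p<2$.

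The paper swaps the roles and runs the dichotomy on $\{u\geqslant\mu^+-\frac14\omega\}$ inside the bottom subcylinder $\bar Q^{\omega}_{\epsilon r}$. If this set has small measure, Lemma~\ref{lem2.6} gives $u\leqslant\mu^+-\frac18\omega$ on a whole ball at one time. That is precisely the hypothesis of Lemma~\ref{lem2.7}, which carries the bound up to $t_0$. If the set has large measure, there is a slice where $\{u\geqslant\frac14\omega\}$ has density $\nu$ (since $\mu^+\geqslant\frac12\omega$). Lemma~\ref{deg_exp_lem} is then applied to $u$ itself, with $b=4^{\lambda-1}\delta$ so that the positivity window reaches $t_0$.

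\textbf{Cases \eqref{eq1.7} and \eqref{eq1.5}.} Your alternatives are quantified the wrong way round. In the first branch you know large measure at only one slice $s$. But Lemma~\ref{new_exp_lem} gives $\mu^+-u\geqslant\sigma a\omega$ only at the single time $s+\delta a^{2-p}\theta(\omega,r)$. Lemma~\ref{lem3.3} gives positivity only on $[s+\frac12(1-\varepsilon)\delta\theta(k,r),\,s+\frac12\delta\theta(k,r)]$. So from an arbitrary $s$ in the lower half, neither covers the target cylinder.

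The missing idea is the propagation of positivity in measure of Lemmas~\ref{lem4.1} and~\ref{lem6.1}. A large average at one slice, $\fint u(\cdot,\bar t)\,dx\geqslant\frac1\gamma b^{N/p}\omega$ (respectively for $\mu^+-u$), is transferred by Theorem~\ref{lem3.1} (respectively~\ref{lem3.2}) to a large average at \emph{every} slice. This works because the tail $\gamma b^{\frac{1}{1-\lambda}}\omega$ (respectively $\gamma b^{\frac{1}{2-p}}\omega$) is dominated by $b^{N/p}\omega$ for small $b$. Since $u\leqslant 2\omega$, this yields measure information at every time, so the expansion lemma can be applied from every slice.

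That exponent comparison, not an $L^1$--$L^\infty$ bound, is where $p+N(\lambda-1)>0$ and $p>\frac{2N}{N+1}$ enter. Passing from $L^1$-smallness to the measure hypothesis of Lemma~\ref{lem2.5} or~\ref{lem2.6} is just Chebyshev. In particular, no $L^1$--$L^\infty$ estimate for $\mu^+-u$ is needed, nor available. Moreover, your Case~1 second branch is not small as stated. Failure at level $\frac14\omega$ gives only $\fint u\leqslant(\frac14+2\alpha)\omega$, which is useless after multiplying by the Harnack constant.

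Your smallness-propagation route can be repaired by reversing the quantifiers. Let the first branch be large measure at a small level $a\omega$ on \emph{every} early slice, so that expansion from each slice covers the top. Let the second branch be failure at \emph{one} slice; this suffices, since the right-hand sides of \eqref{eq3.1} and \eqref{eq3.3} are infima in time. Then choose $b$ first, then $\alpha$, then $a$.
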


\noindent The proof of Proposition~\ref{pr1.1} is presented in Subsections~\ref{sec-case1}, \ref{sec-case2}, and~\ref{sec-case3}, corresponding to the three cases  \eqref{eq1.7}, \eqref{eq1.5}, and \eqref{eq1.6}, respectively. We first introduce some common notation and preliminary results that will be used throughout the proof.

\noindent For constants $b>0$ and $\epsilon\in(0,1)$ to be specified later, we define the
cylinder
\begin{equation*}
Q^{\omega}_{\epsilon r}:=Q_{\epsilon r, b\,\theta(\omega,\epsilon r)}(x_0, t_0)\subset Q_{r,b\,\theta(\omega, r)}. 
\end{equation*}
We note that, without loss of generality, we may assume that
\begin{equation}\label{eq4.1}
\mu^{+} \geqslant \frac{1}{2}\,\omega,
\end{equation}
since otherwise the estimate \eqref{eq1.9} follows immediately. Indeed,
\begin{equation*}
\osc\limits_{Q_{\epsilon_0 r,  b\,\theta(\sigma_0\omega, \epsilon_0 r)}} u \leqslant \osc\limits_{Q_{r,  b\,\theta(\omega,r)}} u \leqslant\mu^+\leqslant\sigma_0\omega
\end{equation*}
with $\sigma_0=\tfrac{1}{2}$ and some $\epsilon_0\in(0,1)$.

The remainder of this section is then devoted to the proof of Proposition~\ref{pr1.1}.

\subsection{Case \eqref{eq1.7}: $\lambda<1$ and $p+N(\lambda-1)>0$ }\label{sec-case1}

\subsubsection{Two Alternatives}

Let $b\in(0,1)$ be a constant to be specified later, depending only on the data. We assume that one of the following alternatives holds for some $\nu\in(0,1)$, depending only on the data and on $b$. 
Either
\begin{equation}\label{eq4.4}
\bigl|Q^{\omega}_{\frac{1}{2}\epsilon r}\cap\{u\geqslant \mu^{+}-\tfrac{1}{4}\omega\}\bigr|\leqslant\nu\,\bigl|Q^{\omega}_{\frac{1}{2}\epsilon r}\bigr|,
\end{equation}
or
\begin{equation}\label{eq4.5}
\bigl|Q^{\omega}_{\frac{1}{2}\epsilon r}\cap\{u\geqslant \mu^{+}-\tfrac{1}{4}\omega\}\bigr|\geqslant\nu\,\bigl|Q^{\omega}_{\frac{1}{2}\epsilon r}\bigr|.
\end{equation}

\subsubsection{Analysis of the First Alternative}
Choose $\nu=\frac{1}{\gamma}\,b^{\frac{N}{p}}$. Then, by  De Giorgi type Lemma~\ref{lem2.6}, estimate \eqref{eq4.4} yields
$$u\leqslant \mu^+-\frac{1}{8}\,\omega,\quad \text{in}\quad Q^{\omega}_{\frac{1}{4} \epsilon r}.$$
Consequently,
$$u-\inf\limits_{Q^{\omega}_{\frac{1}{4}\epsilon r }}u\leqslant u-\mu^-\leqslant \mu^+-\mu^--\frac{1}{8}\,\omega\leqslant\frac{7}{8}\,\omega,\quad \text{in}\quad Q^{\omega}_{\frac{1}{4} \epsilon r}.$$
Therefore,
\begin{equation}\label{eq4.6}
\osc\limits_{Q^{\omega}_{\frac{1}{4}\epsilon r }} u \leqslant \frac{7}{8}\,\omega,
\end{equation}
which proves Proposition \ref{pr1.1}.

\subsubsection{Analysis of the Second Alternative}

Inequality \eqref{eq4.5} yields
\begin{equation}\label{eq4.5*}
\bigl|B_{\frac{1}{2}\epsilon r}(x_0)\cap\{u(\cdot,\bar{t})\geqslant \mu^{+}-\tfrac{1}{4}\omega\}\bigr|\geqslant\nu\,\bigl|B_{\frac{1}{2}\epsilon r}(x_0)\bigr|,
\qquad\nu=\frac{1}{\gamma}\,b^{\frac{N}{p}}.
\end{equation}
for a time level $\bar{t}\in (t_0- \,b\,(\frac{1}{2}\epsilon \,r)^{p}\,\omega^{1-\lambda}, t_0)$.

\begin{lemma}[\bf Propagation of Positivity in Measure]\label{lem4.1}
Let inequality \eqref{eq4.5*} hold. Then there exist constants $\bar{\epsilon}$, $\alpha_0\in (0, 1)$ depending only on the data and on $b$, such that for all $t\in(t_0-b \,(\epsilon r)^{p}\,\omega^{1-\lambda},t_0)$ there holds
\begin{equation}\label{eq4.7}
|B_{\epsilon r}(x_0)\cap\{u(\cdot, t)\geqslant \bar{\epsilon}\omega\}|\geqslant \alpha_0\,|B_{\epsilon r}(x_0)|.
\end{equation}
\end{lemma}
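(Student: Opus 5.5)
The plan is to propagate the measure information at the single time $\bar t$ to every time level in the cylinder, using the $L^1$-$L^1$ Harnack inequality of Theorem \ref{lem3.1}. The Barenblatt condition $p+N(\lambda-1)>0$ is then what turns an $L^1$ lower bound into a measure lower bound.

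\textbf{Step 1: an $L^1$ lower bound at time $\bar t$.} By \eqref{eq4.5*}, and since $\mu^+\geqslant \frac12\omega$ by \eqref{eq4.1}, we have $\mu^+-\frac14\omega\geqslant \frac14\omega$. Hence
\[
\fint_{B_{\frac12\epsilon r}(x_0)}u(x,\bar t)\,dx\geqslant \tfrac14\,\nu\,\omega .
\]

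\textbf{Step 2: transfer to every time level via Theorem \ref{lem3.1}.} Fix $t\in(t_0-b(\epsilon r)^p\omega^{1-\lambda},t_0)$ and let $I$ be the time interval between $\bar t$ and $t$. Its length is at most $b(\epsilon r)^p\omega^{1-\lambda}$. Apply Theorem \ref{lem3.1} with radius $\frac12\epsilon r$ on the interval $I$, so that $B_{\epsilon r}(x_0)$ plays the role of $B_{2r}$. The supremum of the averages over $B_{\frac12\epsilon r}$ is at least the value at $\bar t$, and the infimum over $B_{\epsilon r}$ is at most the value at $t$. This gives
\[
\tfrac14\nu\omega\leqslant \gamma\fint_{B_{\epsilon r}(x_0)}u(x,t)\,dx+\gamma\big(2^p b\,\omega^{1-\lambda}\big)^{\frac1{1-\lambda}}
=\gamma\fint_{B_{\epsilon r}(x_0)}u(x,t)\,dx+\gamma\,2^{\frac p{1-\lambda}}b^{\frac1{1-\lambda}}\omega .
\]
Since $\nu=\gamma^{-1}b^{N/p}$, absorbing the error term requires
\[
b^{\frac1{1-\lambda}}\ll b^{\frac Np}\quad\text{for small } b,\qquad\text{that is,}\qquad \frac1{1-\lambda}>\frac Np \iff p+N(\lambda-1)>0 .
\]
This is exactly \eqref{eq1.7}. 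Choosing $b$ small depending only on the data gives
\[
\fint_{B_{\epsilon r}}u(\cdot,t)\,dx\geqslant c_1\nu\omega .
\]
I expect this compatibility of exponents to be the main point. One must ensure that $b$ is fixed here, before any other constant that depends on $b$, and that the cylinder is contained in $\Omega_T$.

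\textbf{Step 3: from $L^1$ to measure.} Split the integral according to whether $u\geqslant\bar\epsilon\omega$, and use $u\leqslant\mu^+\leqslant 2\omega$ (by \eqref{eq1.8}):
\[
c_1\nu\omega\leqslant \bar\epsilon\,\omega+2\omega\,\frac{|B_{\epsilon r}\cap\{u(\cdot,t)\geqslant\bar\epsilon\omega\}|}{|B_{\epsilon r}|}.
\]
Take $\bar\epsilon=\frac12 c_1\nu$. This yields \eqref{eq4.7} with $\alpha_0=\frac14 c_1\nu$, and both constants depend only on the data and $b$.
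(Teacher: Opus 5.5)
Your proposal is correct and follows essentially the same route as the paper. You obtain an $L^1$ lower bound at $\bar t$ from \eqref{eq4.5*}, transfer it to every $t$ via Theorem~\ref{lem3.1} with radius $\tfrac12\epsilon r$, absorb the error $b^{\frac{1}{1-\lambda}}\omega$ against $b^{\frac{N}{p}}\omega$ using $p+N(\lambda-1)>0$, and finish by splitting over $\{u\geqslant\bar\epsilon\omega\}$ with $u\leqslant\mu^+\leqslant 2\omega$. The only cosmetic difference is that you apply Theorem~\ref{lem3.1} on the subinterval between $\bar t$ and $t$ rather than on the whole cylinder $Q^{\omega}_{\epsilon r}$, which is immaterial because the estimate compares a supremum and an infimum over the same time interval.
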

\begin{proof}
First, we note that by \eqref{eq1.8} and \eqref{eq4.1}, $\frac{1}{2}\omega\leqslant \mu^+\leqslant 2 \omega$. By inequality \eqref{eq4.5*}, we obtain 
$$\fint\limits_{B_{\frac{1}{2} \epsilon r}(x_0)} u(x, \bar{t})\,dx\geqslant
\frac{1}{|B_{\frac{1}{2} \epsilon r}(x_0)|}\int\limits_{B_{\frac{1}{2}\epsilon r}(x_0)\cap\{u(\cdot, \bar{t})\geqslant \mu^+-\frac{1}{4}\,\omega\}} u(x, \bar{t})\,dx\geqslant
\nu \biggr(\mu^+-\frac{1}{4}\omega\biggr)\geqslant \frac{1}{\gamma}\,b^{\frac{N}{p}}\omega.$$
We now apply the $L^{1}_{\mathrm{loc}}-L^{1}_{\mathrm{loc}}$ Harnack inequality, Theorem~\ref{lem3.1}, in the cylinder $Q^{\omega}_{\epsilon r}$:
\begin{align}\label{eq4.8}
\frac{1}{\gamma}\,b^{\frac{N}{p}}\omega& \leqslant 
\fint\limits_{B_{\frac{1}{2} \epsilon r}(x_0)} u(x, \bar{t})\,dx 
\leqslant \gamma\inf\limits_{t_0-b\, (\epsilon r)^{p} \omega^{1-\lambda}\leqslant t\leqslant t_0}\fint\limits_{B_{\epsilon r}(x_0)} u(x, t)\,dx +\gamma \omega\, b^{\frac{1}{1-\lambda}} 
\notag\\ &=\frac{\gamma}{|B_{\epsilon r}(x_0)|}\inf\limits_{t_0-b\, (\epsilon r)^{p} \omega^{1-\lambda}\leqslant t\leqslant t_0}\int\limits_{B_{\epsilon r}(x_0)\cap\{u(\cdot, t)< \bar{\epsilon}\omega\}} u(x, t)\,dx 
\notag\\&+ \frac{\gamma}{|B_{\epsilon r}(x_0)|}\inf\limits_{t_0-b\, (\epsilon r)^{p} \omega^{1-\lambda}\leqslant t\leqslant t_0}\int\limits_{B_{\epsilon r}(x_0)\cap\{u(\cdot, t)\geqslant \bar{\epsilon}\omega\}} u(x, t)\,dx +\gamma \omega\, b^{\frac{1}{1-\lambda}}
\notag\\&\leqslant\gamma\bar{\epsilon}\omega + \gamma 2 \omega \frac{|B_{\epsilon r}(x_0)\cap\{u(\cdot, t)\geqslant \bar{\epsilon}\,\omega\}|}{|B_{\epsilon r}(x_0)|} +\gamma \omega\, b^{\frac{1}{1-\lambda}}\notag
\\&\leqslant \gamma \bar{\epsilon} \omega+ \gamma b^{\frac{1}{1-\lambda}}\,\omega+
\gamma\,\omega\frac{|B_{\epsilon r}(x_0)\cap\{u(\cdot, t)\geqslant \bar{\epsilon}\,\omega\}|}{|B_{\epsilon r}(x_0)|},
\end{align}
for all $t\in (t_0-b\,(\epsilon r)^{p}\,\omega^{1-\lambda}, t_0)$.
Using the assumption $p+N(\lambda-1)>0$, we choose the constants $b$ and
$\bar{\epsilon}$ so that
\begin{equation*}
\gamma \,b^{\frac{1}{1-\lambda}-\frac{N}{p}}\leqslant \frac{1}{2\gamma},\quad \text{and}\quad \gamma\,\bar{\epsilon}=\frac{1}{4\gamma}\,b^{\frac{N}{p}}.
\end{equation*}
With this choice, we obtain \eqref{eq4.7} with $\alpha_0=\frac{1}{4\gamma}\,b^{\frac{N}{p}}$, which completes the proof of the lemma.
\end{proof}

Continuing the proof of Proposition~\ref{pr1.1}, we apply Lemma~\ref{lem3.3}. Since \eqref{eq4.7} holds, Lemma~\ref{lem3.3} yields the existence of constants $\varepsilon, \delta, \sigma\in(0,1)$, depending only on the data and on $b$, such that the following estimate holds:
$$u(x, t)\geqslant \sigma\bar{\epsilon}\,\omega,\quad x\in B_{\frac{1}{2} \epsilon r}(x_0),$$
for all $t$ in the interval
\begin{equation*}
t_0-\,b\,(\epsilon r)^{p}\,\omega^{1-\lambda}+\frac{1}{2}(1-\varepsilon)\delta(\epsilon r)^{p}\,(\bar{\epsilon}\omega)^{1-\lambda} = t_0-\big(b-\tfrac{1}{2}(1-\varepsilon)\delta\,\bar{\epsilon}^{1-\lambda}\big)(\epsilon r)^{p}\omega^{1-\lambda}\leqslant  t\leqslant t_0,
\end{equation*}
Choosing $b=(1-\varepsilon)\delta\,\bar{\epsilon}^{1-\lambda}$, we obtain
\begin{equation*}
t_0-\frac{1}{2}\,b\,(\epsilon r)^{p}\,\omega^{1-\lambda}\leqslant t_0-b\,\big(\tfrac{1}{2}\epsilon r\big)^{p}\,\omega^{1-\lambda}\leqslant t\leqslant t_0.
\end{equation*}
Finally, using \eqref{eq1.8}, we get
\begin{equation}\label{eq4.9}
\osc\limits_{Q^{\omega}_{\frac{1}{2}\epsilon r}} u\leqslant (1+\eta_0-\sigma\bar{\epsilon})\omega=\big(1-\tfrac{1}{2}\sigma\bar{\epsilon}\big)\omega,
\end{equation}
provided that $\eta_0= \frac{1}{2}\sigma\bar{\epsilon}$. 

\subsubsection{Pasting of Two Alternatives Together}
Combining \eqref{eq4.6} and \eqref{eq4.9} and choosing 
$$\sigma_0=\max\left(1-\frac{\sigma\bar{\epsilon}}{2} ,\,\frac{7}{8}\right),\quad 
\epsilon_0= \frac{\epsilon}{4},$$
we obtain
\begin{equation}\label{eq4.10}
 \osc\limits_{Q^{\sigma_0 \omega}_{\epsilon_0 r}} u  \leqslant \sigma_0\,\omega.
\end{equation}
%\textcolor{pink}{\begin{equation*}
%$Q^{\omega}_{\epsilon r}:=Q^-_{\epsilon r, b\,(\epsilon %r)^{p}\omega^{1-\lambda}}(x_0, t_0)\subset Q^-_{r, \theta(\omega, r)}. 
%\end{equation*}
%$$Q^-_{\epsilon r, b\,(\epsilon r)^{p}\omega^{1-\lambda}}(x_0, t_0)=B_{\epsilon r}(x_0)\times (t_0-b\,(\epsilon r)^{p}\omega^{1-\lambda}, t_0).$$
%$Q^-_{\epsilon_0 r, \theta(\sigma_0 \omega, \epsilon_0 r)}=Q^{\sigma_0 \omega}_{\epsilon_0 r}$??}
This proves Proposition \ref{pr1.1} in the case $\lambda<1$ and $p+N(\lambda-1)>0$.

\subsection{Case \eqref{eq1.5}: $\lambda>1$ and $\frac{2N}{N+1}<p<2$}\label{sec-case2}

\subsubsection{Two Alternatives}

Let $b\in(0,1)$ be a constant to be specified later, depending only on the data. We assume that one of the following alternatives holds for some $\nu\in(0,1)$, depending only on the data and on $b$. 
Either
\begin{equation}\label{eq6.3}
\big| Q^{\omega}_{\frac{1}{2}\epsilon r}\cap\{u\leqslant \tfrac{1}{4}\,\omega\}\big|\leqslant \nu \big|Q^{\omega}_{\frac{1}{2}\epsilon r}\big|,
\end{equation}
or
\begin{equation}\label{eq6.4}
\big| Q^{\omega}_{\frac{1}{2}\epsilon r}\cap\{u\leqslant \tfrac{1}{4}\,\omega \}\big|\geqslant \nu \big|Q^{\omega}_{\frac{1}{2}\epsilon r}\big|.
\end{equation}

\subsubsection{Analysis of the First Alternative}
In the  case \eqref{eq6.3}, choosing $\nu=\frac{1}{\gamma}\,b^{\frac{N}{p}}$ and applying De Giorgi type Lemma \ref{lem2.5}, we obtain 
\begin{equation*}
u\geqslant \frac{1}{8}\,\omega,\quad \text{in}\quad  Q^{\omega}_{\frac{1}{4}\epsilon r}.
\end{equation*}
Consequently, using \eqref{eq1.8} and choosing $\eta_0=\frac{1}{16}$, we derive
\begin{equation}\label{eq6.5}
\osc\limits_{Q^{\omega}_{\frac{1}{4}\epsilon r}} u\leqslant \biggr(\eta_0+\frac{7}{8}\biggr)\,\omega=\frac{15}{16}\,\omega.
\end{equation}

\subsubsection{Analysis of the Second Alternative}

he second alternative \eqref{eq6.4} yields
\begin{equation}\label{eq6.4*}
\big| B_{\frac{1}{2}\epsilon r}(x_0)\cap\{u(\cdot, \bar{t})\leqslant \tfrac{1}{4}\,\omega\}\big|\geqslant \nu \big|B_{\frac{1}{2}\epsilon r}(x_0)\big|,\qquad\nu=\frac{1}{\gamma}\,b^{\frac{N}{p}},
\end{equation}
for some time level $\bar{t}\in(t_0-b\,(\frac{1}{2}\epsilon r)^{p}\,\omega^{1-\lambda}, t_0)$. 

\begin{lemma}[\bf Propagation of Positivity in Measure]\label{lem6.1}
Let inequality \eqref{eq6.4*} hold. Then there exist constants $\bar{\epsilon}$, $\alpha_0\in (0, 1)$ depending only on the data and on $b$, such that for all $t\in(t_0-b \,(\epsilon r)^{p}\,\omega^{1-\lambda},t_0)$ there holds
%\begin{equation}\label{eq6.6}
%|B_{\epsilon r}(x_0)\cap\big\{u(\cdot, t)\geqslant \mu^+-\bar{\epsilon}\,\omega\big\}|\leqslant (1-\alpha_0)|B_{\epsilon r}(x_0)|.
%\end{equation}
\begin{equation}\label{eq6.6}
\big|B_{\epsilon r}(x_0)\cap\big\{u(\cdot, t)\leqslant \mu^+-\bar{\epsilon}\,\omega\big\}\big|\geqslant \alpha_0\big|B_{\epsilon r}(x_0)\big|.
\end{equation}
\end{lemma}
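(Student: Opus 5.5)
The plan is to mirror the proof of Lemma~\ref{lem4.1}, replacing $u$ by the nonnegative function $\mu^+-u$ and Theorem~\ref{lem3.1} by Theorem~\ref{lem3.2}.

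First, by \eqref{eq1.8} and \eqref{eq4.1} we have $\frac12\omega\leqslant\mu^+\leqslant 2\omega$, and in particular $\mu^+-\frac14\omega\geqslant\frac14\omega$. On the set $\{u(\cdot,\bar t)\leqslant\frac14\omega\}$ this gives $\mu^+-u\geqslant\frac14\omega$. Hence \eqref{eq6.4*} yields
\[
\fint\limits_{B_{\frac12\epsilon r}(x_0)}(\mu^+-u(x,\bar t))\,dx\geqslant \frac{\nu}{4}\,\omega=\frac{1}{\gamma}\,b^{\frac Np}\omega .
\]

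Next, I apply Theorem~\ref{lem3.2}, with radius $\frac12\epsilon r$ and doubled radius $\epsilon r$, on the time interval $(t_0-b(\epsilon r)^p\omega^{1-\lambda},t_0)$. This interval contains $\bar t$, and $u\leqslant\mu^+$ holds on $Q^{\omega}_{\epsilon r}\subset Q_{r,b\theta(\omega,r)}$. The error term is
\[
[\mu^+]^{\frac{(m-1)(p-1)}{2-p}}\Big(\frac{b(\epsilon r)^p\omega^{1-\lambda}}{(\frac12\epsilon r)^p}\Big)^{\frac1{2-p}}\leqslant\gamma\,\omega^{\frac{(m-1)(p-1)+1-\lambda}{2-p}}b^{\frac1{2-p}} .
\]
The exponent of $\omega$ simplifies because $(m-1)(p-1)+1-\lambda=2-p$. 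So the error term equals $\gamma\,b^{\frac1{2-p}}\omega$, the same $\omega$-homogeneity as in the case $\lambda<1$. Thus for every $t$ in the interval,
\[
\frac1\gamma b^{\frac Np}\omega\leqslant\gamma\fint\limits_{B_{\epsilon r}(x_0)}(\mu^+-u(x,t))\,dx+\gamma b^{\frac1{2-p}}\omega .
\]
I split the integral over $\{\mu^+-u<\bar\epsilon\omega\}$ and its complement. On the complement I bound $\mu^+-u\leqslant\mu^+\leqslant 2\omega$, using $u\geqslant0$. This gives
\[
\frac1\gamma b^{\frac Np}\omega\leqslant\gamma\bar\epsilon\omega+\gamma b^{\frac1{2-p}}\omega+\gamma\omega\frac{|B_{\epsilon r}(x_0)\cap\{u(\cdot,t)\leqslant\mu^+-\bar\epsilon\omega\}|}{|B_{\epsilon r}(x_0)|}.
\]

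The key step is to absorb the error term $\gamma b^{1/(2-p)}\omega$ into the left-hand side. This requires $\frac1{2-p}>\frac Np$, i.e.\ $p>N(2-p)$, i.e.\ $p>\frac{2N}{N+1}$, which is exactly hypothesis \eqref{eq1.5}; this is the same role that $p+N(\lambda-1)>0$ played in Lemma~\ref{lem4.1}. With it I choose $b$ small depending on the data so that $\gamma b^{\frac1{2-p}-\frac Np}\leqslant\frac1{2\gamma}$. Then I take $\gamma\bar\epsilon=\frac1{4\gamma}b^{\frac Np}$, which yields \eqref{eq6.6} with $\alpha_0=\frac1{4\gamma^2}b^{\frac Np}$.

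The main obstacle is justifying the use of Theorem~\ref{lem3.2} here: it needs the larger ball to fit in the cylinder where $u\leqslant\mu^+$, which holds since $\epsilon<1$. It also needs the constant $\gamma$ to be independent of $b$, which is true because Theorem~\ref{lem3.2} holds for arbitrary time lengths.
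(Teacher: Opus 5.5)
Your proposal is correct and follows the paper's proof essentially step for step: you get the lower bound from \eqref{eq6.4*} via $\mu^+-\frac14\omega\geqslant\frac14\omega$, apply Theorem~\ref{lem3.2} on $Q^{\omega}_{\epsilon r}$ so that the error term reduces to $\gamma\, b^{\frac{1}{2-p}}\omega$ (because $(m-1)(p-1)+1-\lambda=2-p$), split at the level $\bar\epsilon\omega$, and choose $b$ small using $\frac{1}{2-p}>\frac{N}{p}\iff p>\frac{2N}{N+1}$. You also make explicit several details the paper leaves implicit: the exponent cancellation, the bound $\mu^+-u\leqslant\mu^+\leqslant 2\omega$, and the check that Theorem~\ref{lem3.2} applies on $Q^{\omega}_{\epsilon r}$ with a constant independent of $b$.
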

\begin{proof}
First, we note that by \eqref{eq1.8} and \eqref{eq4.1}, $\frac{1}{2}\omega\leqslant \mu^+\leqslant 2 \omega$. We now apply Theorem~\ref{lem3.2} in the cylinder $Q^{\omega}_{\epsilon r}$ and obtain \eqref{eq3.3} in this cylinder. From this, for any $t\in(t_0-b\,(\epsilon r)^{p}\,\omega^{1-\lambda}, t_0)$ the following estimate holds:
\begin{equation*}
\begin{aligned}
\frac{1}{\gamma}\,b^{\frac{N}{p}}\,\omega&\leqslant \nu\,(\mu^+-\frac{1}{4}\,\omega)\leqslant \fint\limits_{B_{\frac{1}{2}\epsilon r}(x_0)}(\mu^+-u(x, \bar{t}))\,dx
\\&\leqslant\gamma \inf\limits_{t_0-b\,(\epsilon r)^{p} \omega^{1-\lambda}\leqslant t \leqslant t_0}\fint\limits_{B_{\epsilon r}(x_0)}(\mu^+-u(x, t))\,dx
+\gamma\,\omega b^{\frac{1}{2-p}}
\\&\leqslant \gamma \omega \Big(\bar{\epsilon}+b^{\frac{1}{2-p}}\Big)+\omega\frac{|B_{\epsilon r}\cap\{\mu^+-u(\cdot, t)\geqslant \bar{\epsilon}\,\omega\}|}{|B_{\epsilon r}(x_0)|}.
\end{aligned}
\end{equation*}
Since $p>\frac{2N}{N+1}$, we choose the constants $b$ and $\bar{\epsilon}$ from the conditions
$$\gamma b^{\frac{1}{2-p}-\frac{N}{p}}\leqslant \frac{1}{2\gamma},\quad  \gamma \,\bar{\epsilon}\leqslant \frac{1}{4\gamma}\,b ^{\frac{N}{p}}.$$
With this choice, we obtain \eqref{eq6.6}, which completes the proof of the lemma.
\end{proof}

To complete the proof of Proposition~\ref{pr1.1}, we apply
Lemma~\ref{new_exp_lem}. Recall that, by \eqref{eq1.8} and \eqref{eq4.1}, we have $\frac{1}{2}\omega\leqslant \mu^+\leqslant 2 \omega$. Hence, by \eqref{eq6.6}, Lemma~\ref{new_exp_lem} yields the existence of constants $\sigma, \delta \in (0, 1)$, such that
\begin{equation*}
u(x, t)\leqslant \mu^+-\sigma\,\bar{\epsilon}\,\omega,\quad x\in B_{\frac{1}{2}\epsilon r}(x_0),
\end{equation*}
for all $t$ in the interval
\begin{equation*}
t_0-b\,(\epsilon r)^{p}\omega^{1-\lambda}+\delta\,\bar{\epsilon}^{2-p}(\epsilon r)^{p}\omega^{1-\lambda} = t_0-(b-\delta\,\bar{\epsilon}^{2-p})(\epsilon r)^{p}\omega^{1-\lambda}\leqslant  t\leqslant t_0.
\end{equation*}
Choosing $\bar{\epsilon}$ sufficiently small so that $\delta\,\bar{\epsilon}^{2-p}<\frac{1}{2}$, and setting $b=2\delta\,\bar{\epsilon}^{2-p}$, we deduce:
\begin{equation}\label{eq6.12}
u(x, t)\leqslant \mu^+-\sigma\,\bar{\epsilon}\,\omega,\quad x\in B_{\frac{1}{2}\epsilon r}(x_0),
\end{equation}
for all $t$ satisfying
\begin{equation*}
t_0-\frac{1}{2}\,b\,(\epsilon r)^{p}\,\omega^{1-\lambda}\leqslant t_0-b\,\big(\tfrac{1}{2}\epsilon r\big)^{p}\,\omega^{1-\lambda}\leqslant t\leqslant t_0.
\end{equation*}
Finally, from \eqref{eq6.12}, we obtain
\begin{equation}\label{eq6.14}
\osc\limits_{Q_{\frac{1}{2}\epsilon r}^\omega} u 
\leqslant (1-\sigma\,\bar{\epsilon})\omega.
\end{equation}
%\textcolor{magenta}{$$u-\inf\limits_{Q_{\frac{1}{2}\epsilon r,\frac{1}{2}\,b\,(\epsilon r)^{p}\,\omega^{1-\lambda}}}u\leqslant u-\mu^- \leqslant \mu^+-\mu^- -\eta\,\omega \leqslant (1-\eta)\omega\quad\text{in }Q^{\omega}_{\frac{1}{2}\epsilon r}$$}

\subsubsection{Pasting of Two Alternatives Together}
Combining \eqref{eq6.5} and \eqref{eq6.14} and choosing 
$$\sigma_0=\max\biggr(1-\sigma\,\bar{\epsilon},\frac{15}{16}\biggr),\qquad\epsilon_0=\min\biggr(\frac{\epsilon}{2}\,\sigma_0^\frac{\lambda-1}{p},\,\frac{1}{4}\biggr),$$
we arrive at
\begin{equation}\label{eq6.15}
 \osc\limits_{Q^{\sigma_0 \omega}_{\epsilon_0 r}} u\leqslant \sigma_0\,\omega,
\end{equation}
which completes the proof of Proposition \ref{pr1.1}.

\subsection{Case \eqref{eq1.6}: $\lambda>1$ and $p>2$}\label{sec-case3}
In this case, we take \(b>1\) and define the cylinder
\[
\bar{Q}^{\omega}_{\epsilon r} := B_{\epsilon r}(x_0) \times \big(t_0 - b(\epsilon r)^p \omega^{1-\lambda}, \, t_0 - (b-1)(\epsilon r)^p \omega^{1-\lambda} \big) \subset Q^{\omega}_{\epsilon r}.
\]

\subsubsection{Two Alternatives}

The following two alternative cases are possible: either 
\begin{equation}\label{eq44.17}
\big| \bar{Q}^{\omega}_{\epsilon r}\cap\{u\geqslant \mu^+-\tfrac{1}{4}\,\omega\}\big|\leqslant \nu |\bar{Q}^{\omega}_{\epsilon r}|,
\end{equation}
 or
\begin{equation}\label{eq44.18}
\big|\bar{Q}^{\omega}_{ \epsilon r}\cap\{u\geqslant \mu^+-\tfrac{1}{4}\,\omega\}\big|\geqslant \nu |\bar{Q}^{\omega}_{ \epsilon r}|
\end{equation}
with some $\nu \in (0, 1)$ to be defined depending only on the data.

\subsubsection{Analysis of the First Alternative}
In the  case \eqref{eq44.17}, choosing $\nu=\frac{1}{\gamma}$ and using De Giorgi type Lemma \ref{lem2.6} with $b=1$ and $\tau=t_0-(b-1)(\epsilon r)^p \omega^{1-\lambda}$, we obtain 
$$u\leqslant \mu^+-\frac{1}{8}\omega,\quad \text{in}\quad  \bar{Q}^{\omega}_{\frac{1}{2}\epsilon r}.$$
A De Giorgi type lemma involving the initial data, Lemma  \ref{lem2.7} ensure the existence of $\xi\in (0, \frac{1}{8})$ depending only on the data and  $b$  such that
$$u\leqslant \mu^+-\frac{1}{4}\xi\,\omega,\quad \text{in}\quad  Q^{\omega}_{\frac{1}{4}\epsilon r},$$
and consequently
\begin{equation}\label{eq44.19}
\osc\limits_{Q^{\omega}_{\frac{1}{4}\epsilon r}} u\leqslant \Big(1-\frac{1}{4}\xi\Big)\,\omega.
\end{equation}

\subsubsection{Analysis of the Second Alternative} 

By \eqref{eq4.1}, the second alternative \eqref{eq44.18} yields
\begin{equation}\label{eq44.20}
\big|B_{\epsilon r}(x_0)\cap\big\{u(\cdot, \bar{t})\geqslant \tfrac{1}{4}\omega\big\}\big|\geqslant
\big|B_{\epsilon r}(x_0)\cap\{u(\cdot, \bar{t})\geqslant \mu^+-\tfrac{1}{4}\,\omega\}\big|\geqslant \nu |B_{\epsilon r}(x_0)|,
\end{equation}
for the time level
$t_0-b\,(\epsilon r)^{p}\,\omega^{1-\lambda}\leqslant \bar{t}\leqslant t_0-(b-1)(\epsilon r)^{p}\omega^{1-\lambda}$. 
Expansion of positivity Lemma~\ref{deg_exp_lem} guarantees the existence of constants $\sigma\in(0,1)$ and $\delta >1$ depending only on the data and on $b$, such that
\begin{equation}\label{eq44.21}
u(x, t)\geqslant \frac{\sigma}{4}\,\omega,\quad x\in B_{\frac{1}{2}\epsilon r}(x_0)
\end{equation}
for all time levels
$$
t_0-b\,(\epsilon r)^{p}\,\omega^{1-\lambda}+\frac{1}{2}\delta(\epsilon r)^p \big(\tfrac{1}{4}\omega\big)^{1-\lambda}\leqslant t\leqslant t_0-b\,(\epsilon r)^{p}\,\omega^{1-\lambda}+\delta(\epsilon r)^p \big(\tfrac{1}{4}\omega\big)^{1-\lambda}.
$$
Choosing $b=4^{\lambda-1}\delta$, we deduce:
\begin{equation*}
t_0-\frac{1}{2}\,b\,(\epsilon r)^{p}\,\omega^{1-\lambda}\leqslant t_0-b\,\big(\tfrac{1}{2}\epsilon r\big)^{p}\,\omega^{1-\lambda}\leqslant t\leqslant t_0.
\end{equation*}
Using \eqref{eq1.8},  \eqref{eq44.21}, we obtain
\begin{equation}\label{eq44.23}
\osc\limits_{Q^{\omega}_{\frac{1}{2}\epsilon r}} u\leqslant \biggr(1+\eta_0-\frac{\sigma}{4}\biggr)\omega=\biggr(1-\frac{\sigma}{8}\biggr)\omega,
\end{equation}
provided that $\eta_0=\frac{\sigma}{8}$. 

\subsubsection{Pasting of Two Alternatives Together}

Combining \eqref{eq44.19} and \eqref{eq44.23} and choosing 
$$
\sigma_0=\max\biggr(1-\frac{\xi}{4},1-\frac{\sigma}{8}\biggr), \qquad\epsilon_0=\min\biggr(\frac{\epsilon}{2}\,\sigma_0^\frac{\lambda-1}{p},\frac{1}{4}\biggr),
$$
we arrive at
\begin{equation}\label{eq44.15}
\osc\limits_{Q^{\sigma_0\omega}_{\epsilon_0 r}} u \leqslant  \sigma_0\,\omega,
\end{equation}
which completes the proof of Proposition \ref{pr1.1}.
%%%%%%%%%%%%%%%%%%%%%%%%%%%%%%%%%%%%%%%%%%%%%%%%%%%%%%%%%%%%%%%%%%%%%%%%%%%%%%%%%%%%%%%%%%%%%%%%%%%%%%%%%%%%%%%%%%%%%%%%%%
\section{Proof of Theorem~\ref{th1.1}}

\subsection{Preliminary settings}

Fix $(x_0, t_0)\in \Omega_T$ and construct the cylinder 
%\begin{equation*}
%Q_{\rho}:= 
%\begin{cases}
%B_{\rho}(x_0) \times(t_0- M^{1-\lambda}\rho^p, t_0),   &\text{if } \lambda<1, \\
%B_{\rho}(x_0) \times(t_0- \rho, t_0)\subset \Omega_T,   &\text{if } \lambda>1,
%\end{cases} 
%\end{equation*}
\begin{equation*}
Q_{\rho}:= 
B_{\rho}(x_0) \times(t_0- \rho, t_0)\subset \Omega_T.
\end{equation*}
%where $M:=\sup\limits_{\Omega_T}u$. 
Choose $r$ and $R$ such that $0<r<R$ and $Q_{8r} \subset Q_{R}\subset \Omega_T$. If for some constant $A>1$ (to be specified later), for all $0<\rho\leqslant r$, 
\begin{equation*}
\osc\limits_{Q_\rho} u\leqslant A\rho^\beta,\quad \text{ where } \quad \beta:=
\begin{cases}
1,   &\text{if } \lambda<1, \\
\frac{p-1}{\lambda-1},   &\text{if } \lambda>1,
\end{cases} 
\end{equation*}
then the H\"{o}lder continuity follows immediately. Otherwise, we assume that there exists $\rho_0 \in (0, r]$ such that
\begin{equation*}
\osc\limits_{Q_{\rho_0}} u \geqslant A\rho_0^\beta.
\end{equation*}
Set 
\begin{equation*}
\mu^+_{-1}:=\sup\limits_{Q_{\rho_0}} u, \qquad
\mu^-_{-1}:=\inf\limits_{Q_{\rho_0}} u, \qquad
\omega_0:=\mu^+_{-1}.
\end{equation*}
Then
\begin{equation}\label{eq9.5}
A\rho_0^\beta\leqslant\osc\limits_{Q_{\rho_0}} u\leqslant \omega_0.
\end{equation}
For $b>0$ to be defined, construct the intrinsic cylinder
\begin{equation*}
Q_{\rho_0, b\,\theta(\omega_0, \rho_0)}:=Q_{\rho_0, b\,\theta(\omega_0, \rho_0)}(x_0, t_0).
\end{equation*}
Observe that, in the case $\lambda<1$, we have
%$$b\,\omega_0^{1-\lambda}\rho_0^p\leqslant b\,M^{1-\lambda}\,\rho^{p}_0\leqslant M^{1-\lambda}\,\rho^{p}_0.$$
\begin{equation*}
b\,\omega_0^{1-\lambda}\rho_0^p\leqslant b\,M^{1-\lambda}r^{p-1}\,\rho_0\leqslant \rho_0, \qquad M:=\sup_{Q_R}u,
\end{equation*}
provided that $r$ is chosen sufficiently small so that $b\,M^{1-\lambda}r^{p-1}\leqslant1$.

\vskip0.2cm \noindent 
In the case $\lambda>1$, using \eqref{eq9.5}, we obtain
%$$b\,\theta(\omega_0, \rho_0)= b\,\omega_0^{1-\lambda}\rho_0^p \leqslant b\,\rho_0\leqslant \rho_0.$$ 
\begin{equation*}
b\,\omega_0^{1-\lambda}\rho_0^p\leqslant \frac{b}{A^{\lambda-1}}\rho_0\leqslant \rho_0,
\end{equation*}
provided that $A$ is large enough to ensure $bA^{1-\lambda}\leqslant1$.
Consequently,
$$Q_{\rho_0, b\theta(\omega_0, \rho_0)}\subset Q_{\rho_0}$$
in both cases $\lambda>1$ and $\lambda<1$.

\subsection{Main Proposition}

Theorem \ref{th1.1} is a consequence of the following result.

\begin{propo}\label{pr9.1}
There exist constants $b>0$ and $\epsilon,\sigma\in(0,1)$ such that the following
holds.
Construct the sequences
\begin{equation*}
r_j:=\epsilon^{j} \rho_0, \qquad\omega_j:=\sigma^j \omega_0, \qquad\theta_j:=\theta(\omega_j, r_j)=r^{p}_j\,\omega^{1-\lambda}_j, \qquad j=0, 1, 2, ...
\end{equation*}
And define the family of intrinsic cylinders 
\begin{equation*}
Q_j:=Q_{r_{j}, b\,\theta_{j}}(x_0,t_0), \qquad \mu^+_j:=\sup\limits_{Q_j}u, \qquad \mu^-_j:=\inf\limits_{Q_j} u.
\end{equation*}
Then for all $j=0,1,2,\ldots$, the following assertions hold:
\begin{equation}\label{eq9.11}
Q_{j+1}\subset Q_{j},\qquad
\osc\limits_{Q_{j}} u=\mu^+_j-\mu^-_j\leqslant \omega_j.
\end{equation}
\end{propo}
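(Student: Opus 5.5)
We prove \eqref{eq9.11} by induction on $j$, with $b$, $\epsilon$, $\sigma$ taken from Proposition~\ref{pr1.1}: $b$ is the constant there, $\sigma:=\sigma_0$, and $\epsilon\leqslant\epsilon_0$ is chosen small enough for the inclusions below. The induction runs through the dichotomy of the Introduction. At each step either the supremum is comparable to the oscillation, so \eqref{eq1.8} holds and Proposition~\ref{pr1.1} applies, or $u$ is bounded away from zero on the current cylinder. In the latter case the equation is nondegenerate there and we use the standard H\"older theory for uniformly parabolic $p$-Laplace type equations.

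\textbf{Base case.} For $j=0$, the preliminary settings give $Q_0\subset Q_{\rho_0}$. Hence $\osc_{Q_0}u\leqslant\mu^+_{-1}=\omega_0$, since $u\geqslant0$.

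\textbf{Inclusion.} Since $r_{j+1}=\epsilon r_j$ and $\omega_{j+1}=\sigma\omega_j$, we have $\theta_{j+1}=\epsilon^p\sigma^{1-\lambda}\theta_j$. So $Q_{j+1}\subset Q_j$ as soon as $\epsilon^p\sigma^{1-\lambda}\leqslant1$. This is automatic if $\lambda<1$. If $\lambda>1$, it holds once $\epsilon\leqslant\sigma^{(\lambda-1)/p}$, which matches the choice of $\epsilon_0$ in Proposition~\ref{pr1.1}.

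\textbf{Induction step.} Assume $\osc_{Q_j}u\leqslant\omega_j$ and set $\mu^\pm=\mu^\pm_j$, $\omega=\omega_j$, $r=r_j$.
\begin{itemize}
\item[(i)] If $\mu^+_j\leqslant(1+\eta_0)\omega_j$, Proposition~\ref{pr1.1} applied in $Q_j$ gives $\osc u\leqslant\sigma_0\omega_j$ on $Q_{\epsilon_0 r_j,\,b\theta(\sigma_0\omega_j,\epsilon_0r_j)}$. This cylinder contains $Q_{j+1}$ when $\epsilon\leqslant\epsilon_0$, which proves the claim at level $j+1$.
\item[(ii)] If $\mu^+_j>(1+\eta_0)\omega_j$, then $\mu^-_j\geqslant\mu^+_j-\omega_j\geqslant\frac{\eta_0}{1+\eta_0}\mu^+_j$, i.e.\ $\inf u\geqslant\gamma^{-1}\sup u$ on $Q_j$. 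Once case (ii) occurs at some first index $j_*$, we stop the intrinsic iteration. On $Q_{j_*}$ the coefficient $u^{(m-1)(p-1)}$ is comparable to the constant $(\mu^+_{j_*})^{(m-1)(p-1)}$ (working with $u$ when $m\geqslant1$, or with $w=u^{m^-}$ otherwise, up to harmless factors). The rescaling $v=u/\mu^+_{j_*}$, $t\mapsto(\mu^+_{j_*})^{\lambda-1}t$ turns the equation into a $p$-Laplace type equation with bounded measurable coefficients in a cylinder of size comparable to $r_{j_*}\times r_{j_*}^p$. Because $\mu^+_{j_*}$ and $\omega_{j_*}$ are comparable, this is the intrinsic geometry $\theta(\omega_{j_*},r_{j_*})$ up to constants. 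The classical DiBenedetto H\"older estimates for degenerate/singular $p$-parabolic equations then give geometric decay of the oscillation on the subsequent nested cylinders, which one matches with $\omega_j$, $j>j_*$, after possibly enlarging $\sigma$ and shrinking $\epsilon$.
\end{itemize}
Since case (i) needs nothing about the earlier history, the induction closes.

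\textbf{Main obstacle.} The hard step is (ii). The cylinders produced by the $p$-Laplacian theory are scaled by $\omega^{2-p}$ rather than $\omega^{1-\lambda}$, and these must be reconciled with the family $Q_j$. The plan is to note that on the dichotomy side $u\sim\mu^+_{j_*}$ is frozen, so the relevant time scale $(\mu^+_{j_*})^{\lambda-1}$ is fixed. The $p$-Laplace intrinsic cylinders, measured against $\omega_j^{1-\lambda}$, differ only by a factor $\sigma^{(j-j_*)(\cdots)}$, which can be absorbed by choosing $\epsilon$ smaller in terms of $\sigma$. If this bookkeeping fails, the fallback is to stop the induction at $j_*$ and prove H\"older continuity directly from the nondegenerate estimate. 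That is still enough for Theorem~\ref{th1.1}, with a possibly smaller exponent.
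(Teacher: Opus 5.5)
Your proposal is correct and follows the paper's own argument. You iterate Proposition~\ref{pr1.1} while $\mu^+_j\leqslant(1+\eta_0)\omega_j$. At the first index $j_*\geqslant1$ where this fails, minimality gives $\mu^+_{j_*}\leqslant\mu^+_{j_*-1}\leqslant(1+\eta_0)\omega_{j_*-1}$. Together with $\mu^-_{j_*}\geqslant\frac{\eta_0}{1+\eta_0}\mu^+_{j_*}$, this yields $\eta_0\omega_{j_*}\leqslant u\leqslant\frac{1+\eta_0}{\sigma}\omega_{j_*}$ on $Q_{j_*}$; this is the justification for the comparability of $\mu^+_{j_*}$ and $\omega_{j_*}$ that you assert without proof. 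You then rescale to a nondegenerate $p$-Laplace type equation and invoke DiBenedetto's oscillation decay, exactly as the paper does.

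The paper rescales by $\omega_{j_*}$ rather than $\mu^+_{j_*}$. It also simply redefines $r_j$, $\omega_j$ for $j\geqslant j_*$ without reconciling the $\omega^{2-p}$ and $\omega^{1-\lambda}$ time scales. So your plan of shrinking $\epsilon$ relative to $\sigma$ (e.g.\ via the H\"older estimate for the rescaled function on $Q_{j_*}$) is a legitimate way to make that step precise within the stated sequences.
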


\begin{proof}
We proceed by induction on $j$.

\medskip
\noindent\textbf{Step $j=0$.}
By definition, we have
\begin{equation*}
\begin{aligned}
&r_0=\rho_0,\quad \omega_0=\omega_0,\quad \theta_0=\omega^{1-\lambda}_0\,\rho^{p}_0,\quad Q_0=Q_{\rho_0, b\,\omega^{1-\lambda}_0\,\rho^{p}_0}
\\&\mu^+_0=\sup\limits_{Q_{0}}u,\quad \mu^-_0=\inf\limits_{Q_{0}} u,\quad \osc\limits_{Q_{0}} u=\mu^+_0-\mu^-_0\leqslant \mu^+_{-1}= \omega_0.
\end{aligned}
\end{equation*}
Hence, \eqref{eq9.11} holds for $j=0$.

\medskip
\noindent\textbf{Step $j=1$.}
By the choice of $\omega_0$, we have
\begin{equation}\label{eq9.12}
\mu^+_0\leqslant \mu^+_{-1} = \omega_0 \leqslant (1+\eta_0)\,\omega_0,
\end{equation}
where $\eta_0$ is the constant defined in Proposition \ref{pr1.1}. Applying Proposition~\ref{pr1.1}, we obtain the existence of constants $b>0$ and $\sigma_0,\epsilon_0\in(0,1)$ such that
\begin{equation}\label{eq9.13}
\osc\limits_{Q_{1}} u\leqslant \omega_1,\qquad r_1=\epsilon_0 \rho_0,\,\,\,\omega_1=\sigma_0\,\omega_0.
\end{equation}
Therefore, \eqref{eq9.11} holds for $j=1$.

\medskip
\noindent\textbf{Induction step.}
We may iterate this procedure, assuming at each step that
\begin{equation}\label{eq9.14}
\mu^+_j\leqslant (1+\eta_0)\,\omega_j,
\end{equation}
and thereby obtain \eqref{eq9.11} with the same constants $\epsilon_0$ and $\sigma_0$ as in \eqref{eq9.13}. Assume now that $j_0\geqslant 1$ is the first number for which \eqref{eq9.14} fails, that is,
\begin{equation}\label{eq9.15}
\mu^+_{j_0}\geqslant (1+\eta_0)\,\omega_{j_0}\quad \Rightarrow \quad \mu^-_{j_0}\geqslant \frac{\eta_0}{1+\eta_0}\mu^+_{j_0}.
\end{equation}
Since \eqref{eq9.14} holds for $j=j_0-1$ (and, in particular, for $j=0$ by \eqref{eq9.12}), we obtain
\begin{equation}\label{eq9.16}
\eta_0\,\omega_{j_0}\leqslant \frac{\eta_0}{1+\eta_0}\mu^+_{j_0}\leqslant \mu^-_{j_0}\leqslant \mu^+_{j_0-1}\leqslant (1+\eta_0)\,\omega_{j_0-1}=\frac{1+\eta_0}{\sigma}\omega_{j_0}.
\end{equation}
In this case, we perform the change of variables
\begin{equation}
\tau=\omega_{j_0}^{\lambda-1}t,\quad v(x,\tau)=\frac{u(x,\omega_{j_0}^{1-\lambda}\tau)}{\omega_{j_0}},
\end{equation}
which maps the cylinder $Q_{j_0}$ into the cylinder $Q_{r_{j_0}, b\,r^{p}_{j_0}}(x_0, \tau_0)$, where $\tau_0:=\omega_{j_0}^{\lambda-1}t_0$. The rescaled function $v$ is uniformly bounded:
\begin{equation}\label{eq9.17}
\eta_0\leqslant v(x,\tau)\leqslant\frac{1+\eta_0}{\sigma}\qquad \text{in } Q_{r_{j_0}, b\,r^{p}_{j_0}}(x_0, \tau_0)
\end{equation}
Consequently, the rescaled function $v$ satisfies a $p$-Laplace type equation with
$p>1$ in the cylinder $Q_{r_{j_0},\,b\,r_{j_0}^{p}}(x_0,\tau_0)$, namely,
\begin{equation*}
\begin{cases}
v_\tau - \operatorname{div}\hat{\mathbf{A}}(x, \tau, v, D v)=0,\qquad (x, \tau)\in Q_{r_{j_0}, b\,r^{p}_{j_0}}(x_0, \tau_0),\\
\hat{\mathbf{A}}(x, \tau, v, D v) D v\geqslant \hat{K}_1\,|D v|^{p},\\
|\hat{\mathbf{A}}(x, \tau, v, D v)|\leqslant \hat{K}_2\,|D v|^{p-1},
\end{cases}
\end{equation*}
where the constants $\hat{K}_1$ and $\hat{K}_2$ depend only on the data, $\eta_0$, and
$\sigma$.By \eqref{eq9.17}, we have:
\begin{equation*}
\osc\limits_{Q_{r_{j_0}, b\,r^{p}_{j_0}}(x_0, \tau_0)}v\leqslant \frac{1+\eta_0(1-\sigma)}{\sigma}=:\hat\omega_{j_0},\qquad p>1.
\end{equation*}
We now distinguish between the cases $p>2$ and $1<p<2$.
Consider first the degenerate case $p>2$. Since $\hat{\omega}_{j_0}>1$, we have
\begin{equation*}
\osc\limits_{Q_{r_{j_0}, b\,\hat\omega_{j_0}^{2-p}r^{p}_{j_0}}(x_0, \tau_0)}v \leqslant \osc\limits_{Q_{r_{j_0}, b\,r^{p}_{j_0}}(x_0, \tau_0)}v \leqslant \hat\omega_{j_0}.
\end{equation*}
Therefore, by the classical result \cite[Chapter~3, Proposition~3.1]{DiB1993},
there exist constants $\hat{\epsilon},\hat{\sigma}\in(0,1)$ such that, defining $r_{j_0+j}:=\hat{\epsilon}^jr_{j_0}$ and $\hat\omega_{j_0+j}:=\hat{\sigma}^j\hat\omega_{j_0}$, the following estimate holds for all $j\geqslant0$:
\begin{equation}\label{eq9.18}
\osc\limits_{Q_{r_{j_0+j}, b\,\hat\omega_{j_0+j}^{2-p}r^{p}_{j_0+j}}(x_0, \tau_0)}v\leqslant \hat\omega_{j_0+j}.
\end{equation}
For the singular case $1<p<2$, we observe that
\begin{equation*}
\osc\limits_{Q_{\hat\omega_{j_0}^{-\frac{2-p}{p}}r_{j_0}, b\,r^{p}_{j_0}}(x_0, \tau_0)}v \leqslant \osc\limits_{Q_{r_{j_0}, b\,r^{p}_{j_0}}(x_0, \tau_0)}v \leqslant \hat\omega_{j_0}.
\end{equation*}
Therefore, by the classical result \cite[Chapter~4, Proposition~2.1]{DiB1993}, there exist constants $\hat{\epsilon},\hat{\sigma}\in(0,1)$ such that, defining $r_{j_0+j}:=\hat{\epsilon}^jr_{j_0}$ and $\hat\omega_{j_0+j}:=\hat{\sigma}^j\hat\omega_{j_0}$, the following estimate holds for all $j\geqslant0$:
\begin{equation}\label{eq9.19}
\osc\limits_{Q_{\hat\omega_{j_0+j}^{-\frac{2-p}{p}}r_{j_0+j}, b\,r^{p}_{j_0+j}}(x_0, \tau_0)}v\leqslant \hat\omega_{j_0+j}.
\end{equation}
We now define:
$$
\hat\epsilon_0:=
\begin{cases}
1,&\text{if }p>2,\\
\hat\omega_{j_0}^{-\frac{2-p}{p}},&\text{if }1<p<2,
\end{cases}, 
\qquad \hat{r}_{j_0}=\hat\epsilon_0 r_{j_0}
\qquad \hat{Q}_{j_0}:=Q_{\hat{r}_{j_0}, b\,\hat\omega_{j_0}^{2-p}\hat{r}^{p}_{j_0}}(x_0, \tau_0).
$$
In these terms, \eqref{eq9.18} and \eqref{eq9.19} imply
\begin{equation*}
\osc\limits_{\hat{Q}_{j_0+j}}v\leqslant \hat\omega_{j_0+j}.
\end{equation*}
Returning to the original function $u(x,t)$, we obtain the following estimate
\begin{equation}\label{eq9.20}
\osc\limits_{{Q}_{j_0+j}}u\leqslant \omega_{j_0+j}.
\end{equation}
with $r_{j_0+j}:=\hat\epsilon_0\hat\epsilon_0^jr_{j_0}$ and $\omega_{j_0+j}:=\tfrac{1+\eta_0(1-\sigma)}{\sigma}\hat\sigma_0^j\omega_{j_0}$. This completes the proof of the Proposition. \end{proof}
 \noindent Now that the quantitative oscillation decay  \eqref{eq9.20} is valid for the net of shrinking cylinders $\{Q_{j}\}_{j \in \mathbb{N}}$, the existence of a locally H\"{o}lder continuous representative of $u$ follows from standard iterative arguments, see, for instance, \cite[Chapters~3 and~4]{DiB1993} for the standard argument and \cite{CCMV}, section 2.1, for the construction of the representative. This completes the proof of Theorem~\ref{th1.1}.

\vspace{.5cm}

\section*{Acknowledgements}  E. Henriques was financed by Portuguese Funds through FCT - Funda\c c\~ao para a Ci\^encia e a Tecnologia - 
within the Project UID/00013/2025, link at https://doi.org/10.54499/UID/00013/2025. S. Ciani acknowledges the support of GNAMPA (INdAM), the PNR funding of the Italian government and the department of Mathematics of the University of Bologna Alma Mater. I.~Skrypnik acknowledges funding from the Simons Foundation (SFI-PD-Ukraine-00017674).  M.~Savchenko was supported  by a grant of the National Academy of Sciences of Ukraine (project number is 0125U002854).

\newpage
{\small 
CONTACT INFORMATION

\medskip

{\bf Simone~Ciani} (simone.ciani3@unibo.it)\\
Universit\`a di Bologna Alma Mater, Piazza Porta San Donato 5, Italy\\

\medskip
{\bf Eurica Henriques} (eurica@utad.pt)\\
Centro de Matem\'atica CMAT; Polo CMAT-UTAD\\
Universidade de Tr\'as-os-Montes e Alto Douro, Vila Real, Portugal
  \\

\medskip 

{\bf Yevgeniia Yevgenieva} (yevgenieva@mpi-magdeburg.mpg.de)\\ 
Max Planck Institute for Dynamics of Complex Technical Systems, Sandtorstrasse 1, 39106 Magdeburg, Germany,\\
Institute of Applied Mathematics and Mechanics,
National Academy of Sciences of Ukraine, Gen. Batiouk Str. 19, 84116 Sloviansk, Ukraine\\

\medskip 
{\bf Mariia Savchenko} (shan\textunderscore maria@ukr.net)\\ Institute of Applied Mathematics and Mechanics,
National Academy of Sciences of Ukraine, Gen. Batiouk Str. 19, 84116 Sloviansk, Ukraine\\

\medskip
{\bf Igor I.~Skrypnik} (ihor.skrypnik@gmail.com)\\Institute of Applied Mathematics and Mechanics,
National Academy of Sciences of Ukraine, Gen. Batiouk Str. 19, 84116 Sloviansk, Ukraine\\

}

\end{document}